\documentclass[12pt,regno]{amsart}
\usepackage{epsf,epsfig,amsmath}
\usepackage{amssymb,latexsym}
\usepackage{tikz}
\usepackage{tikz-cd}
\usepackage{amsthm} 
\usepackage{amsfonts}
\usepackage{graphicx,psfrag}

\numberwithin{equation}{section}

\textwidth18cm 
\textheight22cm 
\hoffset-2.7cm 
\voffset-.5cm

\newtheorem{thm}{Theorem}[section]
\newtheorem{pro}[thm]{Proposition}
\newtheorem{lem}[thm]{Lemma}
\newtheorem{con}[thm]{Conjecture}
\newtheorem{cor}[thm]{Corollary}
\newtheorem*{cor*}{Corollary}

\theoremstyle{remark}\newtheorem{rem}[thm]{Remark}

\theoremstyle{definition}\newtheorem{defi}[thm]{Definition}
\theoremstyle{definition}\newtheorem{exa}[thm]{Example}
\theoremstyle{definition}\newtheorem{probl}[thm]{Problem}

\newcommand\blfootnote[1]{%
  \begingroup
  \renewcommand\thefootnote{}\footnote{#1}%
  \addtocounter{footnote}{-1}%
  \endgroup
}

\title[Kazhdan--Lusztig $R$-polynomials for pircons]{Kazhdan--Lusztig $R$-polynomials for pircons}

\author{Mario Marietti}

\address{Dipartimento  di Ingegneria Industriale e Scienze Matematiche, Universit\`a Politecnica delle Marche, Via Brecce Bianche, 60131 Ancona,  Italy}

\email{m.marietti@univpm.it}

\subjclass[2010]{05E99, 20F55}
\keywords{Kazhdan--Lusztig polynomials, Coxeter groups, Special matchings}

\begin{document}

\begin{abstract}
The purpose of this work is to provide a common combinatorial framework for some of the analogues and generalizations of Kazhdan--Lusztig $R$-polynomials that have appeared since the introduction of these remarkable polynomials (e.g., parabolic Kazhdan--Lusztig $R$-polynomials, Kazhdan--Lusztig $R$-polynomials of zircons, and Kazhdan--Lusztig--Vogan polynomials for fixed point free involutions). 
\end{abstract}

\maketitle\blfootnote{\copyright \quad 2019. This manuscript version is made available under the CC-BY-NC-ND 4.0 license http://creativecommons.org/licenses/by-nc-nd/4.0/\\ The final publication is available at: https://doi.org/10.1016/j.jalgebra.2019.05.038}

\section{Introduction}
Kazhdan--Lusztig $R$-polynomials play a  central role in Lie theory and representation theory. They are polynomials $R_{u,v}(q)$, in one variable $q$, which are associated with pairs of elements $u,v$ in a Coxeter group $W$.  As well as the celebrated Kazhdan--Lusztig $P$-polynomials, they were introduced by Kazhdan and Lusztig in \cite{K-L} in order to study the (now called)  
Kazhdan--Lusztig representations of the Hecke algebra of $W$. These polynomials  have soon found applications in many other contexts. 

Since their introduction, Kazhdan--Lusztig $R$-polynomials have been studied a lot. There is an enormous literature on these polynomials and related objects. In particular, there are several works on analogues and generalizations of Kazhdan--Lusztig $R$-polynomials.  The purpose of this work is to provide a common combinatorial framework for some of these analogues and generalizations. For example,  parabolic Kazhdan--Lusztig $R$-polynomials (see \cite{Deo87}), Kazhdan--Lusztig $R$-polynomials of diamonds and zircons (see \cite{BCM2} and \cite{MJaco}, respectively), and Kazhdan--Lusztig--Vogan $R$-polynomials for fixed point free involutions (which are the Kazhdan--Lusztig--Vogan $R$-polynomials associated with the action of $Sp(2n,C)$  on the flag variety of $SL(2n,C)$, see \cite{AH}) lie within this framework.

Let us present this general setting. In \cite{AHH}, Abdallah, Hansson, and A. Hultman introduce a generalization of the concept of a zircon, which they call a pircon since pircons relate to special partial matchings in the same way as zircons relate to special matchings. A pircon is a partially ordered set $P$ such that, for every non minimal element $x\in P$, the subposet $\{y\in P:y\leq x\}$, denoted $P_{\leq x}$, is finite and admits a special partial matching. Special partial matchings are a generalization of special matchings (see Section~2) and were introduced in \cite{AH} in order to study the Kazhdan--Lusztig--Vogan $R$-polynomials for fixed point free involutions. For each non minimal element $v\in P$, we may fix a special partial matching $M_v$ of $P_{\leq v}$ and call the pair $(P, \{M_v: v \text{ non minimal}\})$ a refined pircon. For each refined pircon, we define two families $\{R^{q}_{u,v\in P}(q)\}$ and $\{R^{-1}_{u,v\in P}(q)\}$ of polynomials in one variable $q$, which are associated with pairs of elements $u,v\in P$. We call these polynomials the \emph{Kazhdan--Lusztig $R$-polynomials} of the refined pircon.  
Note that, in general, Kazhdan--Lusztig $R$-polynomials do depend on the refinement of the pircon $P$. When $P$ is a refined zircon, a diamond, or a Coxeter group (partially ordered by Bruhat order), the two families $\{R^{q}_{u,v\in P}(q)\}$ and $\{R^{-1}_{u,v\in P}(q)\}$ coincide, and are actually the Kazhdan--Lusztig $R$-polynomials of $P$ as a refined zircon, a diamond, or a Coxeter group, respectively. When $P$ is the set of minimal coset representatives of left cosets of a parabolic subgroup $W_J$ in a Coxeter group $W$,  the two families $\{R^{q}_{u,v\in P}(q)\}$ and $\{R^{-1}_{u,v\in P}(q)\}$ are the parabolic Kazhdan--Lusztig $R$-polynomials of type $q$ and $-1$, respectively.  When $P$ is the set of twisted identities of the symmetric group $S_{2n}$, the two families $\{R^{q}_{u,v\in P}(q)\}$ and $\{R^{-1}_{u,v\in P}(q)\}$ are the Kazhdan--Lusztig $R$-polynomials and $Q$-polynomials for fixed point free involutions.

\section{Notation, definitions and preliminaries}
\label{preliminari}

This section reviews the background material that is needed  in the rest of this work.
We  follow   \cite[Chapter 3]{StaEC1} and  \cite{BB} for undefined notation and 
terminology concerning, respectively,  partially ordered sets and Coxeter groups.

\subsection{Special matchings and special partial matchings.}
Let $P$ be a partially ordered set (poset for short). An element $y\in P$ {\em covers} $x\in P$  if the interval $[x,y]$ coincides with $\{x,y\}$; in this case, we write $x \lhd y$ as well as $y \rhd x$.
If $P$ has a minimum (respectively, a maximum), we denote it by $\hat{0}_P$  (respectively, $\hat{1}_P$). 
The poset $P$ is {\em graded} if $P$ has a minimum and there is a function
$\rho : P \rightarrow {\mathbb N}$ (the {\em rank function}
of $P$) such that $\rho (\hat{0})=0$ and $\rho (y) =\rho (x)
+1$ for all $x,y \in P$ with $x \lhd y$. 
(This definition is slightly different from the one given in \cite{StaEC1}, but is
more convenient for our purposes.) 
The {\em Hasse diagram} of $P$ is any drawing of the graph having $P$ as vertex set and $ \{ \{ x,y \} \in \binom {P}{2} \colon\,  \text{ either $x \lhd y$ or $y \lhd x$} \}$ as edge set, with the convention that, if $x \lhd y$, then the edge $\{x,y\}$ goes upward from $x$ to $y$. When no confusion arises, we make no distinction between the Hasse diagram and its underlying graph.

A {\em matching} of a poset \( P \) is an involution
\( M:P\rightarrow P \) such that \( \{v,M(v)\}\) is an edge in the Hasse diagram of $P$, for all \( v\in V \).
A matching \( M \) of \( P \) is {\em special} if\[
u\lhd v\Longrightarrow M(u)\leq M(v),\]
 for all \( u,v\in P \) such that \( M(u)\neq v \). 

The following definitions are taken from \cite{MJaco}, \cite{AH}, and \cite{AHH}, respectively. Given a poset $P$ and $x\in P$, we set $P_{\leq x}=\{y \in P \mid y \leq x\}$.
\begin{defi}
\label{zircone}
A poset $Z$ is a \emph{zircon} provided that, for every non-minimal element $x \in Z$, the order ideal $P_{\leq x}$ is finite and admits a special matching.
\end{defi}

\begin{defi}\label{accoppiamento parziale}
Let $P$ be a finite poset with $\hat1_P$.
A \emph{special partial matching} of $P$ is an involution $M: P \to P$ such that
\begin{itemize}
  \item $M(\hat1) \lhd \hat1$,
  \item for all $x\in P$, we have $M(x) \lhd x$, $M(x)=x$, or $x\lhd M(x)$, and
  \item if $x\lhd y$ and $M(x) \neq y$, then $M(x)<M(y)$.
\end{itemize}
\end{defi}
\begin{defi}
A poset $P$ is a \emph{pircon} provided that, for every non-minimal element $x \in P$, the order ideal $P_{\leq x}$ is finite and admits a special partial matching.
\end{defi}
The terminology comes from the fact that a special partial matching without fixed points is precisely a special matching and the fact that pircons relate to special partial matchings in the same way as zircons relate to special matchings. Connected zircons and pircons are graded posets (the argument for the zircons in  \cite[Proposition 2.3]{H08} applies also to pircons). We always denote the rank function by $\rho$.

Given a poset $P$ and $w\in P$, we say that $M$ is a matching of $w$ if $M$ is a matching of $P_{\leq w}$, and we denote by $SPM_w$ the set of all special partial matchings of $w$. Hence, if $P$ is a pircon then $SPM_w\neq \emptyset$ for all $w\in P\setminus \{\hat{0}_P\} $. 
In pictures, we visualize a special partial matching $M$ of a poset $P$ by taking the Hasse diagram of $P$ and coloring in the same way, for all $x\in P$, either the edge $\{x,M(x)\}$ if $M(x)\neq x$, or a circle around $x$ if $M(x)=x$.
An example is in Figure~\ref{esempio}, where the special partial matching is colored in dashed black and fixes only the bottom element $\hat{0}_P$. 
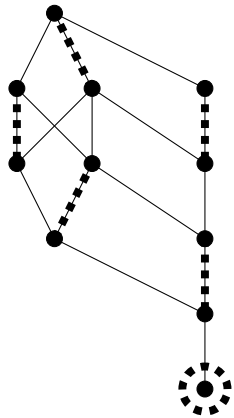
\begin{figure}[h]
\begin{center}
$$
\begin{tikzpicture}

\draw[fill=black]{(0,0) circle(3pt)};
\draw[fill=black]{(-0.5,1) circle(3pt)};
\draw[fill=black]{(-0.5,2) circle(3pt)};
\draw[fill=black]{(0,3) circle(3pt)};
\draw[fill=black]{(0.5,1) circle(3pt)};
\draw[fill=black]{(0.5,2) circle(3pt)};
\draw[dashed, line width=3pt]{(-0.5,2)--(-0.5,1)};
 \draw[dashed, line width=3pt]{(0,0)--(0.5,1)}; 
 \draw[dashed, line width=3pt]{(0.5,2)--(0,3)}; 
 \draw{(0.5,2)--(0.5,1)};
 \draw{(0,0)--(-0.5,1)}; 
 \draw{(-0.5,2)--(0,3)}; 
 \draw{(-0.5,1)--(0.5,2)}; 
 \draw{(0.5,1)--(-0.5,2)}; 
  \draw{(0.5,2)--(0,3)}; 

  \draw{(-0.5,1)--(-0.5,2)}; 
  \draw{(0.5,1)--(0,0)}; 
 
 \draw[fill=black]{(2,1) circle(3pt)};
\draw[fill=black]{(2,2) circle(3pt)};
\draw[fill=black]{(2,0) circle(3pt)};
\draw[fill=black]{(2,-1) circle(3pt)};
\draw[fill=black]{(2,-2) circle(3pt)};
  \draw{(0,3)--(2,2)}; 
   \draw{(0.5,2)--(2,1)}; 
    \draw{(0.5,1)--(2,0)}; 
     \draw{(0,0)--(2,-1)};

\draw[dashed, line width=3pt]{(2,-2) circle (.3)};
\draw[dashed, line width=3pt]{(2,2)--(2,1)};
\draw[dashed, line width=3pt]{(2,0)--(2,-1)};
  \draw{(2,2)--(2,-2)}; 
\end{tikzpicture}$$
\end{center}
\caption{\label{esempio} A special partial matching.} 
\end{figure}

For a proof of the following result, see \cite[Lemma 5.2]{AHH}.
\begin{lem}[Lifting property of special partial matchings]
\label{lifting}
Let $P$ be a finite poset with $\hat1_P$, and $M$ be a special partial matching of $P$. If $x,y \in P$ with $x<y$ and $M(y) \leq y$, then
\begin{itemize}
  \item[(i)]   $M(x) \leq y$,
  \item[(ii)]  $M(x) \leq x \implies M(x)<M(y)$, and
  \item[(iii)] $M(x) \geq x \implies x \leq M(y)$.
\end{itemize}
\end{lem}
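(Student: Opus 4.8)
The plan is to prove (i), (ii) and (iii) simultaneously by induction on $\ell(x,y)$, the length of a longest saturated chain from $x$ to $y$ in $P$ --- a well-defined positive integer since $P$ is finite. Pleasantly, the argument needs neither the axiom $M(\hat1)\lhd\hat1$ nor any gradedness hypothesis, and it is modelled on the proof of the lifting property for the Bruhat order of a Coxeter group (cf.\ \cite{BB}), with the three defining properties of a special partial matching taking over the role of the exchange condition.

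For the base case $x\lhd y$ I would argue straight from the axioms. The second axiom allows only $M(x)\lhd x$, $M(x)=x$, or $x\lhd M(x)$. In the first two cases $M(x)\neq y$, so the third axiom gives $M(x)<M(y)$, and together with $M(y)\le y$ this yields (i) and (ii); (iii) is then vacuous, except when $M(x)=x$, where it reads $x=M(x)<M(y)$. In the last case either $M(x)=y$, whence $M(y)=x$ and all three statements are immediate, or $M(x)\neq y$ and the third axiom gives $x<M(x)<M(y)\le y$, again forcing (i)--(iii).

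For the inductive step, with $\ell(x,y)=n\ge 2$, fix a saturated chain $x\lhd z\lhd\cdots\lhd y$; then $\ell(z,y)=n-1$ and $\ell(x,z)=1$ are both $<n$, so the statement is available for $(x,z)$ and $(z,y)$ whenever their top elements satisfy the standing hypothesis. I would first dispose of the case $M(x)\le x$: here (i) is clear since $M(x)\le x<y$, and (iii) is vacuous unless $M(x)=x$, so it suffices to prove $M(x)<M(y)$. If $M(z)\le z$, part (ii) applied to $(x,z)$ gives $M(x)<M(z)$ and part (ii) applied to $(z,y)$ gives $M(z)<M(y)$; if $z\lhd M(z)$, part (iii) applied to $(z,y)$ gives $z\le M(y)$, whence $M(x)\le x<z\le M(y)$. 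Either way $M(x)<M(y)$.

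The case $x\lhd M(x)$ is where the real work lies, and part (i), that $M(x)\le y$, is the main obstacle, since nothing obvious keeps $M(x)$ below $y$. Note first that $M(x)\neq y$, as otherwise $x\lhd y$ would force $n=1$. I would prove $M(x)\le y$ by contradiction: assuming $M(x)\not\le y$, look again at $z$. If $M(z)\le z$, part (i) applied to $(x,z)$ gives $M(x)\le z\le y$, a contradiction; if $z\lhd M(z)$, then $M(x)\neq z$, the third axiom gives $M(x)<M(z)$, and part (i) applied to $(z,y)$ gives $M(z)\le y$, so $M(x)<M(z)\le y$, again a contradiction. Hence $M(x)<y$. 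Since $M(M(x))=x\lhd M(x)$ and $\ell(M(x),y)<n$ (prepend $x\lhd M(x)$ to a longest chain from $M(x)$ to $y$), the inductive hypothesis applies to $(M(x),y)$; its part (ii) reads $M(M(x))<M(y)$, i.e.\ $x<M(y)$, which gives (iii), while (i) has just been established and (ii) is vacuous. The only bookkeeping to keep honest throughout is that every appeal to the inductive hypothesis is to a pair with a strictly shorter longest chain whose top element $t$ still satisfies $M(t)\le t$ --- which holds for $(z,y)$ and $(M(x),y)$ --- and this is precisely where the hypothesis $M(y)\le y$ of the statement does its work.
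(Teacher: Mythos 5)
The paper does not actually prove Lemma~\ref{lifting}; it refers the reader to \cite[Lemma~5.2]{AHH}, so there is no in-paper argument to compare yours against. Your blind proof is correct and self-contained, and it is the natural adaptation to special partial matchings of the classical induction used for lifting properties of special matchings. I checked the steps: the base case $x\lhd y$ follows from the second and third axioms of Definition~\ref{accoppiamento parziale} exactly as you say (with the subcase $M(x)=y$ handled separately); in the inductive step the case $M(x)\le x$ is correctly settled either by chaining part (ii) through $z$ or via $M(x)\le x<z\le M(y)$, and the case $x\lhd M(x)$ correctly reduces (i) to the pairs $(x,z)$ and $(z,y)$ and then obtains (iii) by applying part (ii) of the inductive hypothesis to the pair $(M(x),y)$, whose longest-chain length is indeed at most $n-1$ because $x\lhd M(x)$ prepends to any saturated chain from $M(x)$ to $y$. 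Every appeal to the inductive hypothesis is to a pair whose top element ($y$, or $z$ in the subcase $M(z)\le z$) satisfies the standing condition, the case distinctions for $M(x)$ and $M(z)$ are exhaustive by the second axiom, and you are right that the axiom $M(\hat 1)\lhd\hat 1$ and any gradedness assumption are never used. The only cosmetic point: the proof of (i) in the case $x\lhd M(x)$ is written as a contradiction but is really a direct two-case derivation of $M(x)\le y$; stating it directly would read more cleanly.
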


Given a finite poset $P$, we denote by $\Delta(P)$ its order complex, which is the simplicial complex whose faces are the chains in $P$. As usual, for $x,y \in P$, we write $\Delta(x,y)$ for $\Delta([x,y]\setminus \{x,y\})$. The following is Theorem 6.4 of \cite{AHH} (we refer the reader to \cite[Section 2.3]{AHH} for a brief account on PL topology).

\begin{thm} \label{intervalli in pirconi}
Let $P$ be a pircon and $x,y\in P$ with $x<y$. Then $\Delta(x,y)$ is a PL ball or a PL sphere.
Moreover, there exist $x,y\in P$, $x<y$,  such that $\Delta(x,y)$ is a ball if and only if $P$ is not a zircon.
\end{thm}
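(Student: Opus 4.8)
The plan is to argue by induction on $\rho(y)$ (equivalently, on $|P_{\leq y}|$). The claim is trivial when $\rho(y)-\rho(x)\leq 1$, since then $(x,y)$ is empty and $\Delta(x,y)$ is the $(-1)$-sphere; so in the inductive step I may assume $\rho(y)-\rho(x)\geq 2$. I would fix a special partial matching $M$ of $P_{\leq y}$ (it exists, as $P$ is a pircon) and set $y':=M(y)\lhd y$, and then express $\Delta(x,y)$ in terms of order complexes $\Delta(x',z')$ of intervals with $\rho(z')<\rho(y)$, each a PL ball or a PL sphere by the inductive hypothesis, using only PL operations whose effect on the ball/sphere dichotomy is standard: a cone over a PL ball or PL sphere is a PL ball; the suspension of a PL sphere (resp.\ PL ball) is a PL sphere (resp.\ PL ball); a join of PL balls and spheres is a PL ball or PL sphere; and, by Newman's gluing lemma, gluing two PL $d$-balls along a PL $(d{-}1)$-ball contained in the boundary of each yields a PL $d$-ball, while gluing them along their entire boundary yields a PL $d$-sphere. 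Two useful preliminary remarks: if an open interval $(x,z)$ has a maximum or a minimum then $\Delta(x,z)$ is a cone over the order complex of a shorter interval, hence a PL ball; and the closed star of any vertex $z\in(x,y)$ in $\Delta(x,y)$ is $z*\big(\Delta(x,z)*\Delta(z,y)\big)$, a PL ball by induction.

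The case analysis is governed by the position of $x$ relative to $y'$. If $M(x)=y$, then $x\lhd y$ and we are in the base case. If $x\leq y'$, then $y'\in(x,y)$, and I would use the lifting property (Lemma~\ref{lifting}) to split $(x,y)$: the elements weakly below $y'$ form the subposet $\{z:x<z\leq y'\}$ with maximum $y'$, whose order complex is the cone with apex $y'$ over $\Delta(x,y')$ --- a PL ball $B_1$ by induction --- while the remaining elements are, again by Lemma~\ref{lifting}, matched downward or fixed by $M$, and the matching $M$ (together with the at most one element matched down to $x$, and the fixed points of $M$, which enter the bookkeeping) identifies the subcomplex they span with a second PL ball $B_2$ meeting $B_1$ along (essentially) $\Delta(x,y')$, so Newman's lemma finishes this case and propagates the ``ball if $\Delta(x,y')$ is a ball, sphere if $\Delta(x,y')$ is a sphere'' alternative. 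If $x\not\leq y'$, then Lemma~\ref{lifting} forbids any element of $(x,y)$ from being matched upward and forces $M(x)\leq x$; here I would show that either $[x,y]$ has a unique coatom --- so $\Delta(x,y)$ is a cone, a PL ball --- or that $M$ carries every coatom of $[x,y]$ to a distinct element covered by $y'$, and use this to again assemble $\Delta(x,y)$ as a ball glued from balls.

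For the ``moreover'': if $P$ is a zircon, then every $P_{\leq y}$ admits a special \emph{matching}, which I would take as $M$; then $M$ is fixed-point-free, no fixed point ever enters the bookkeeping above, and $\Delta(x,y)$ always comes out as an iterated suspension of the $(-1)$-sphere, i.e.\ a PL sphere, recovering the known statement for zircons. Conversely, if $P$ is not a zircon, I would pick $y$ minimal such that $P_{\leq y}$ admits no fixed-point-free special partial matching; any $M\in SPM_y$ then fixes some $z\neq\hat 0_P$ of $P_{\leq y}$ (otherwise one could rematch $\hat 0_P$ to an atom and contradict minimality of $y$), and applying the first case of the analysis to the interval $[\hat 0_P,y]=P_{\leq y}$ --- which is always the case $x\leq y'$ since $\hat 0_P\leq y'$ --- would force a genuine boundary contribution coming from $z$, so that $\Delta(\hat 0_P,y)$ is a PL ball rather than a PL sphere.

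The step I expect to be the main obstacle is making the decompositions of the second paragraph rigorous and exhaustive while staying in the PL category. The delicate points are: (i) the bottom endpoint, where $M(x)\lhd x$ carries $x$ out of $[x,y]$ and the pairing of downward- and upward-matched elements fails, requiring a separate reduction (e.g.\ passing to $[M(x),y]$ or to a different special partial matching); (ii) proving that the complementary subcomplex $B_2$ is indeed a PL ball, which should amount to recognizing it as a mapping-cylinder-like object built from $\Delta(x,y')$ via $M$ and capped off using the fixed points and the element matched down to $x$; and (iii) the configuration $x\not\leq y'$ with $[x,y]$ having several coatoms. Controlling precisely how the fixed points of the special partial matchings intervene is also what should deliver the sharp equivalence in the ``moreover'': a fixed point inside $[x,y]$ is exactly what breaks the ``suspension of a sphere'' pattern and turns the resulting PL sphere into a PL ball.
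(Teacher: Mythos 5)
First, a point of order: the paper does not prove this statement at all --- it is quoted verbatim as Theorem~6.4 of \cite{AHH}, so there is no internal proof to compare your attempt against. Judged on its own, your proposal is a plausible outline of the expected kind of argument (induct on $\rho(y)$, fix $M\in SPM_y$, set $y'=M(y)$, split $(x,y)$ according to comparability with $y'$, and assemble PL balls and spheres via cones, suspensions, joins and Newman's gluing lemma), but it is an outline with genuine gaps, most of which you candidly flag yourself, and those gaps are exactly where the content of the theorem lives.

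The central gap is the decomposition in your second paragraph. Writing $A=\{z: x<z\leq y'\}$ and $B=(x,y)\setminus A$, the complex $\Delta(x,y)$ is \emph{not} the union of the induced subcomplex on $A$ (your $B_1$) and the induced subcomplex on $B$ (your $B_2$): any chain meeting both $A$ and $B$ lies in neither, and such chains do occur --- indeed for $z\in B$ one has $M(z)\lhd z$ and, by Lemma~\ref{lifting}(ii), $M(z)<y'$, so $\{M(z),z\}$ is already a mixed edge whenever $M(z)\neq x$. So as stated the two pieces do not cover $\Delta(x,y)$ and Newman's lemma cannot be invoked. The correct object to adjoin to $B_1$ is a union of closed stars of the vertices of $B$, attached one at a time in a suitable order, and the whole difficulty is in proving that each star meets the previously built complex in a PL ball of codimension zero in its boundary; this is precisely what you defer in your items (i)--(iii), together with the case $x\not\leq y'$ with several coatoms, for which no argument is offered. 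Two further inaccuracies: by Lemma~\ref{lifting}(iii) a fixed point $z=M(z)$ with $z<y$ satisfies $z\leq y'$, so the elements of $B$ are matched strictly downward, never fixed; and in the ``moreover'' direction, exhibiting a $y$ for which every $M\in SPM_y$ has a fixed point does not by itself show that $\Delta(\hat{0}_P,y)$ is a ball rather than a sphere --- one must actually run the decomposition for that interval and verify that a nonempty boundary survives (or compute the M\"obius function), whereas the converse direction for zircons is the known sphere result and is fine to cite. In short, the strategy is reasonable, but the load-bearing PL-topological steps are exactly the ones left unproved.
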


\subsection{Coxeter groups}
We fix our notation on a Coxeter system $(W,S)$ in the following list:
\smallskip 

{\renewcommand{\arraystretch}{1.2}
$
\begin{array}{@{\hskip-1.3pt}l@{\qquad}l}
m(s,t) &  \textrm{the entry of the Coxeter matrix of  $(W,S)$ in position $(s,t)\in S\times S$}, 
\\
e &  \textrm{identity of $W$}, 
\\
\ell  &  \textrm{the length function of $(W,S)$},
\\
T &= \{ w s w ^{-1} : w \in W, \; s \in S \},  \textrm{  the set of {\em reflections} of $W$},
\\
D_R(w) & =\{ s \in S : \; \ell(w  s) < \ell(w ) \},  \textrm{ the right descent set of $w\in W$},
\\
D_L(w) & =\{ s \in S : \; \ell( sw) < \ell(w ) \},  \textrm{ the left descent set of $w\in W$},
\\
W_J & \textrm{ the parabolic subgroup of $W$ generated by $J\subseteq S$},
\\
W^J &=\{ w \in W \, : \; D_{R}(w)\subseteq S\setminus J \}, \textrm{ the set of minimal left coset representatives},
\\
^{J} W &=\{ w \in W \, : \; D_{L}(w)\subseteq S\setminus J \}, 
\textrm{ the set of minimal right coset representatives},
\\
\leq & \textrm{ Bruhat order on $W$ (as well as any other order on a poset $P$)},
\\
\textrm{$[u,v]$} & =\{ w \in W \, : \; u \leq w \leq v \}, \textrm{ the (Bruhat) interval generated by $u,v\in W$},
\\
w_0(J) &  \textrm{ the unique maximal  element of $[e,w]\cap W_{J}$, for $J\subseteq  S$},
\\
w_0(s,t) & =w_0(\{s,t\}),  \textrm{  for $s,t\in S$},
\\
\textrm{ $[u,v]^{H}$} &  = \{ z \in W^{H}: \;  u \leq z \leq v \}, \textrm{ the parabolic (Bruhat)  interval generated by $u,v\in W^H$}.

\end{array}$
\bigskip

We make use of the symbol ``$\textrm{-}$''  to separate letters in a word in the alphabet $S$ when we want to stress the fact that we are considering the word rather than the element that such word represents.

If $w\in W$, then a {\em reduced expression} for $w$ is a word $s_1\textrm{-}s_2\textrm{-}\cdots \textrm{-}s_q$ such that $w=s_1s_2\cdots s_q$ and $\ell(w)=q$. When no confusion arises, we also write that $s_1s_2\cdots s_q$ is a reduced expression for $w$.

The Coxeter group $W$ is partially ordered by {\em Bruhat order} (see, for example,  \cite[Section~2.1]{BB} or \cite[Section~5.9]{Hum}), which we denote by $\leq$. The Bruhat order (sometimes also called {\em Bruhat-Chevalley order}) is the partial order whose  covering relation $\lhd$ is as follows: if $u,v \in W$, then $u \lhd v$ if and only if $u^{-1}v \in T$ and $\ell (u)=\ell (v)-1$. The Coxeter group $W$, partially ordered by Bruhat order, is a graded poset having $\ell$ as its rank function.

The following  well-known characterization of Bruhat order
is usually referred to as the {\em Subword Property} (see \cite[Section~2.2]{BB} or \cite[Section~5.10]{Hum}), and is used repeatedly in this work,
often without explicit mention. 
By a {\em subword} of a word $s_{1}\textrm{-}s_{2} \textrm{-} \cdots \textrm{-} s_{q}$, we mean
a word of the form
$s_{i_{1}}\textrm{-} s_{i_{2}}\textrm{-} \cdots \textrm{-} s_{i_{k}}$, where $1 \leq i_{1}< \cdots
< i_{k} \leq q$.

\begin{thm}[Subword Property]
\label{subword}
Let $u,w \in W$. Then the following are equivalent:
\begin{itemize}
\item $u \leq w$ in the Bruhat order,
\item  every reduced expression for $w$ has a subword that is 
a reduced expression for $u$,
\item there exists a  reduced expression for $w$ having a subword that is 
a reduced expression for $u$.
\end{itemize}
\end{thm}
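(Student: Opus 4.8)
To prove the three conditions equivalent, I would label them $(\mathrm{i})$, $(\mathrm{ii})$, $(\mathrm{iii})$ in the order displayed (so $(\mathrm{i})$ is $u\le w$, $(\mathrm{ii})$ is the ``every reduced expression'' statement, and $(\mathrm{iii})$ the ``there exists'' statement) and establish the cycle $(\mathrm{ii})\Rightarrow(\mathrm{iii})\Rightarrow(\mathrm{i})\Rightarrow(\mathrm{ii})$. The implication $(\mathrm{ii})\Rightarrow(\mathrm{iii})$ is immediate because $w$ admits at least one reduced expression. Thus the real content lies in $(\mathrm{iii})\Rightarrow(\mathrm{i})$ and $(\mathrm{i})\Rightarrow(\mathrm{ii})$, both of which I would prove by induction on $\ell(w)$.

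For $(\mathrm{iii})\Rightarrow(\mathrm{i})$, suppose $s_1\textrm{-}\cdots\textrm{-}s_q$ is a reduced expression for $w$ and $s_{i_1}\textrm{-}\cdots\textrm{-}s_{i_k}$ (with $i_1<\cdots<i_k$) is a subword that is a reduced expression for $u$; the case $q=0$ is trivial. Put $s=s_q$ and let $w'$ be the element represented by the reduced word $s_1\textrm{-}\cdots\textrm{-}s_{q-1}$, so $w'\lhd w$. If $i_k<q$, then $s_{i_1}\textrm{-}\cdots\textrm{-}s_{i_k}$ is a subword of $s_1\textrm{-}\cdots\textrm{-}s_{q-1}$, hence $u\le w'$ by induction and $u\le w'\lhd w$. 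If $i_k=q$, then $s_{i_1}\textrm{-}\cdots\textrm{-}s_{i_{k-1}}$ is a reduced expression (a prefix of a reduced word is reduced) for $u':=us$, and it is a subword of $s_1\textrm{-}\cdots\textrm{-}s_{q-1}$, so $u'\le w'$ by induction; moreover $\ell(u's)>\ell(u')$ and $\ell(w's)>\ell(w')$. Here I would invoke a lifting statement: \emph{if $x\le y$ in $W$ and $s\in S$ satisfy $\ell(xs)>\ell(x)$ and $\ell(ys)>\ell(y)$, then $xs\le ys$}; applied with $x=u'$, $y=w'$ it gives $u=u's\le w's=w$.

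For $(\mathrm{i})\Rightarrow(\mathrm{ii})$, fix an arbitrary reduced expression $s_1\textrm{-}\cdots\textrm{-}s_q$ for $w$; the case $u=w$ is clear. If $u<w$, choose $w'$ with $u\le w'\lhd w$ and set $t:=w'^{-1}w\in T$, so $w'=wt$ and $\ell(wt)<\ell(w)$. The Strong Exchange Property (a standard consequence of the Exchange Condition; see \cite[Chapter~1]{BB}) yields an index $i$ with $wt=s_1\textrm{-}\cdots\textrm{-}\widehat{s_i}\textrm{-}\cdots\textrm{-}s_q$, and this word, of length $q-1=\ell(w')$, is a reduced expression for $w'$. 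By the inductive hypothesis applied to $u\le w'$ and to this expression, $u$ has a reduced expression that is a subword of $s_1\textrm{-}\cdots\textrm{-}\widehat{s_i}\textrm{-}\cdots\textrm{-}s_q$, hence of $s_1\textrm{-}\cdots\textrm{-}s_q$; since the latter was arbitrary, $(\mathrm{ii})$ holds.

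The one step I expect to require genuine work is the lifting statement used in $(\mathrm{iii})\Rightarrow(\mathrm{i})$. I would prove it by induction on $\ell(y)$: choosing $y_1$ with $x\le y_1\lhd y$, if $\ell(y_1s)>\ell(y_1)$ then the inductive hypothesis gives $xs\le y_1s$, and one checks $y_1s\lhd ys$ (indeed $(y_1s)^{-1}(ys)$ is a conjugate of the reflection $y_1^{-1}y$ and the two lengths differ by one), so $xs\le ys$. The remaining case $\ell(y_1s)<\ell(y_1)$ is the delicate one and calls either for a further application of the Strong Exchange Property or for running a simultaneous induction that also carries the companion lifting statements (such as: $\ell(xs)>\ell(x)$ and $\ell(ys)<\ell(y)$ imply $xs\le y$, from which $xs\le y<ys$). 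Alternatively one can organize the whole argument around the Strong Exchange Property, deducing the lifting statement and the subword characterization from it together; in any event, once lifting is available the two inductions above go through routinely.
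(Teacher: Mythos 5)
The paper offers no proof of Theorem~\ref{subword}: it is quoted as a classical result with pointers to \cite[Section~2.2]{BB} and \cite[Section~5.10]{Hum}. Your argument is a correct rendition of the standard proof found there — $(\mathrm{i})\Rightarrow(\mathrm{ii})$ by induction on $\ell(w)$ via the Strong Exchange Property, and $(\mathrm{iii})\Rightarrow(\mathrm{i})$ via the lifting statement (Deodhar's ``Property~Z'', cf.\ \cite{Deo77}). You correctly identify the one point requiring care: the lifting statement cannot simply be imported from Lemma~\ref{ll} of the paper, since that lemma is normally derived \emph{from} the Subword Property, so it must be established independently (by simultaneous induction on its companion statements, or directly from Strong Exchange) exactly as you indicate.
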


The following results are well known (see, e.g.,  \cite[Theorem~1.1]{Deo77},  \cite[Proposition~2.2.7]{BB} or \cite[Proposition~5.9]{Hum} for the first one, and  \cite[Section~2.4]{BB} or \cite[Section~1.10]{Hum} for the second one).
\begin{lem}[Lifting Property]
\label{ll}
Let $s\in S$ and $u,w\in W$, with $u\leq w$. 
\begin{enumerate}
\item[-] If $s\in D_R(w)$ and $s\in D_R(u)$ then $us\leq ws$.
\item[-] If $s\notin D_R(w)$ and $s \notin D_R(u)$ then $us\leq ws$.
\item[-] If $s\in D_R(w)$ and $s\notin D_R(u)$ then $us\leq w$ and $u\leq ws$.
\end{enumerate}
Symmetrically,  left versions of the three statements hold.
\end{lem}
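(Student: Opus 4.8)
The plan is to deduce all three statements from the Subword Property (Theorem~\ref{subword}), by choosing in each case a reduced expression for $w$ adapted to the position of $s$ and then extracting suitable subwords for $u$. Two elementary facts will be used repeatedly: $s\in D_R(x)$ holds if and only if $x$ admits a reduced expression ending in the letter $s$, and in that case deleting the final $s$ gives a reduced expression for $xs$; and if $s\notin D_R(x)$ then $\ell(xs)=\ell(x)+1$, so appending $s$ to any reduced expression for $x$ yields a reduced expression for $xs$.

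Suppose first that $s\in D_R(w)$; this case handles the first and third statements. Fix a reduced expression $w=s_1\cdots s_{q-1}\,s$, so that $s_1\cdots s_{q-1}$ is a reduced expression for $ws$. Since $u\leq w$, the Subword Property gives a reduced subword $s_{i_1}\cdots s_{i_k}$ of $s_1\cdots s_{q-1}\,s$ representing $u$, and I would split according to whether the trailing $s$ is used, i.e.\ whether $i_k=q$. If $i_k=q$, then $u$ has a reduced expression ending in $s$, so $s\in D_R(u)$ and $us=s_{i_1}\cdots s_{i_{k-1}}$ is a subword of the reduced expression $s_1\cdots s_{q-1}$ for $ws$, whence $us\leq ws$. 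If $i_k<q$, then $u=s_{i_1}\cdots s_{i_k}$ is itself a subword of $s_1\cdots s_{q-1}$, so $u\leq ws$; moreover, if $s\in D_R(u)$ then $us\lhd u\leq ws$, while if $s\notin D_R(u)$ then $s_{i_1}\cdots s_{i_k}\,s$ is a reduced expression for $us$ and is a subword of $s_1\cdots s_{q-1}\,s=w$, so $us\leq w$. Collecting these: when $s\in D_R(u)$ we get $us\leq ws$ in either subcase (first statement); when $s\notin D_R(u)$ the alternative $i_k=q$ is impossible (it would force $s\in D_R(u)$), so the $i_k<q$ analysis applies and delivers both $u\leq ws$ and $us\leq w$ (third statement).

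For the second statement, assume $s\notin D_R(w)$ and $s\notin D_R(u)$. Fix a reduced expression $w=s_1\cdots s_q$, so that $s_1\cdots s_q\,s$ is reduced for $ws$; choose a reduced subword $s_{i_1}\cdots s_{i_k}$ of $s_1\cdots s_q$ representing $u$, and observe that $s_{i_1}\cdots s_{i_k}\,s$ is reduced for $us$ and is a subword of $s_1\cdots s_q\,s$, whence $us\leq ws$. Finally, the left versions follow by applying the right versions just established to the pair $u^{-1}\leq w^{-1}$, using that inversion is an order automorphism of the Bruhat order and that $D_L(x)=D_R(x^{-1})$. I do not anticipate a genuine obstacle: the entire argument is bookkeeping with subwords. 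The only points requiring a little care are the case split on the trailing letter in the $s\in D_R(w)$ analysis and the observation that $s\notin D_R(u)$ forces any reduced subword of $s_1\cdots s_{q-1}\,s$ representing $u$ to avoid the final $s$.
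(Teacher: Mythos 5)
Your argument is correct, but note that the paper does not prove this lemma at all: it is stated as a well-known fact with citations to Deodhar, Bj\"orner--Brenti, and Humphreys, so there is no in-paper proof to compare against. Your derivation from the Subword Property is sound and is essentially the standard textbook argument. The case analysis is handled carefully: when $s\in D_R(w)$ you correctly observe that a reduced subword of $s_1\cdots s_{q-1}\textrm{-}s$ representing $u$ either uses the trailing $s$ (forcing $s\in D_R(u)$ and giving $us\leq ws$ by deletion) or avoids it (giving $u\leq ws$ directly, and $us\leq w$ by appending $s$ when $\ell(us)=\ell(u)+1$), and you correctly rule out the first alternative when $s\notin D_R(u)$. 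The reduction of the left versions to the right versions via $x\mapsto x^{-1}$ and $D_L(x)=D_R(x^{-1})$ is also standard and valid. The only thing your write-up leans on implicitly is that the word $s_{i_1}\cdots s_{i_k}\textrm{-}s$ obtained by appending $s$ really is reduced for $us$ when $s\notin D_R(u)$ (length count: $k+1=\ell(u)+1=\ell(us)$), which you do state up front, so there is no gap.
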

\begin{pro}
\label{fattorizzo}
Let $J \subseteq S$. 
\begin{enumerate}
\item[(i)] 
Every $w \in W$ has a unique factorization $w=w^{J} \cdot w_{J}$ 
with $w^{J} \in W^{J}$ and $w_{J} \in W_{J}$; for this factorization, $\ell(w)=\ell(w^{J})+\ell(w_{J})$.
\item[(ii)] Every $w \in W$ has a unique factorization $w=\, _{J} w\,  \cdot \,  ^{J}\! w$ 
with $_{J} w \in W_{J}$, $^{J} \! w \in \,  ^{J} W$; for this factorization, $\ell(w)=\ell(_{J} w )+\ell(^{J} \! w)$.
\end{enumerate}
\end{pro}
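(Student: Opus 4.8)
The plan is to prove (i) in full and to derive (ii) from it formally. For (ii), apply (i) to $w^{-1}$: since $D_{R}(x)=D_{L}(x^{-1})$ we have $(W^{J})^{-1}={}^{J}W$, and $\ell(x)=\ell(x^{-1})$, so inverting the factorization $w^{-1}=(w^{-1})^{J}\cdot(w^{-1})_{J}$ yields a factorization $w={}_{J}w\cdot{}^{J}\!w$ with ${}_{J}w\in W_{J}$, ${}^{J}\!w\in{}^{J}W$ and additive lengths, and uniqueness transfers the same way. So fix $J\subseteq S$ and $w\in W$.

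\emph{Existence and additivity of lengths in (i).} Let $u$ be an element of minimal length in the left coset $wW_{J}$ and put $v:=u^{-1}w\in W_{J}$, so $w=uv$. Then $u\in W^{J}$: if some $s\in J$ lay in $D_{R}(u)$, then $us\in uW_{J}=wW_{J}$ would have length $\ell(u)-1$, contradicting the choice of $u$. To get $\ell(w)=\ell(u)+\ell(v)$, concatenate a reduced expression for $u$ with one for $v$; this is an expression for $w$ of length $\ell(u)+\ell(v)$, and by a standard consequence of the Deletion property (see \cite{BB}) it contains a subword that is a reduced expression for $w$. Split that subword as $a\cdot b$ with $a$ inside the $u$-block and $b$ inside the $v$-block; then $ab=w$, $\ell(a)+\ell(b)=\ell(w)$, the word $a$ is reduced and represents some element $u'$ with $\ell(u')=\ell(a)\le\ell(u)$, and $b$ represents an element of $W_{J}$. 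If $\ell(a)<\ell(u)$, then $w=u'b$ with $b\in W_{J}$ shows $u'\in wW_{J}$ with $\ell(u')<\ell(u)$, a contradiction; hence $\ell(a)=\ell(u)$, so $a$ is the whole $u$-block and $b$ the whole $v$-block, giving $\ell(w)=\ell(u)+\ell(v)$.

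\emph{Uniqueness.} Everything hinges on the claim that $\ell(xy)=\ell(x)+\ell(y)$ for all $x\in W^{J}$ and $y\in W_{J}$. Granting it, if $w=u_{1}v_{1}=u_{2}v_{2}$ with $u_{i}\in W^{J}$, $v_{i}\in W_{J}$, then $v_{0}:=u_{2}^{-1}u_{1}=v_{2}v_{1}^{-1}\in W_{J}$ satisfies $u_{1}=u_{2}v_{0}$ and $u_{2}=u_{1}v_{0}^{-1}$, so the claim gives $\ell(u_{1})=\ell(u_{2})+\ell(v_{0})$ and $\ell(u_{2})=\ell(u_{1})+\ell(v_{0})$; adding forces $v_{0}=e$, hence $u_{1}=u_{2}$ and $v_{1}=v_{2}$. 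To prove the claim I would induct on $\ell(y)$, the inductive statement being that the concatenation of reduced expressions for $x$ and $y$ is reduced. The case $y=e$ is clear; for $\ell(y)\ge 1$ pick $s\in D_{R}(y)\subseteq J$ and write $y=y's$ with $\ell(y')=\ell(y)-1$, so by induction a reduced expression for $xy'$ is obtained by concatenation. If appending $s$ failed to stay reduced, then $s\in D_{R}(xy')$, and by the Strong Exchange property $(xy')s$ would be obtained from that reduced expression by deleting one letter; deleting a letter of the $y'$-block gives $xy=xy''$ with $\ell(y'')=\ell(y)-2$, absurd after cancelling $x$, so a letter of the $x$-block is deleted, yielding $x'y'=(xy')s$ with $\ell(x')=\ell(x)-1$ and $x'=x\sigma$ where $\sigma:=y's(y')^{-1}$ is a reflection of $W_{J}$. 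Thus everything reduces to the statement: if $x\in W^{J}$ and $\sigma$ is a reflection of $W_{J}$, then $\ell(x\sigma)>\ell(x)$.

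This reflection inequality is the one genuinely non-combinatorial point, and I would prove it in the reflection representation. Let $\beta$ be the positive root of $\sigma$; since $\sigma\in W_{J}$, $\beta$ lies in the span of $\{\alpha_{s}:s\in J\}$, hence $\beta=\sum_{s\in J}c_{s}\alpha_{s}$ with all $c_{s}\ge 0$. If $\ell(x\sigma)<\ell(x)$ then $x\beta$ is a negative root; but $x\beta=\sum_{s\in J}c_{s}(x\alpha_{s})$, and $x\in W^{J}$ means each $x\alpha_{s}$ ($s\in J$) is a positive root, so $x\beta$ is a nonnegative combination of positive roots, hence has nonnegative coordinates in the basis of simple roots, hence --- being a nonzero root --- is positive, a contradiction. (Alternatively one simply quotes the claim, which is the classical parabolic factorization lemma, from \cite{BB} or \cite{Hum}.) The main obstacle is precisely this reflection inequality; given it, the existence, length-additivity and uniqueness assertions are routine bookkeeping with the Subword (Theorem~\ref{subword}) and Exchange/Deletion properties.
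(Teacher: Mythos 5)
Your argument is correct and complete, but note that the paper does not actually prove Proposition~\ref{fattorizzo}: it is quoted as a well-known fact with pointers to Deodhar, to \cite[Proposition~2.2.7 and Section~2.4]{BB}, and to \cite[Proposition~5.9 and Section~1.10]{Hum}, so there is no in-paper proof to compare against. What you have written is essentially the classical textbook argument from those sources: pick a minimal-length representative $u$ of the coset $wW_{J}$, obtain $u\in W^{J}$ and the length additivity $\ell(w)=\ell(u)+\ell(v)$ from the deletion/subword property (your bookkeeping with the subword $a\cdot b$ is sound, since $a$ being a full-length reduced subword of the $u$-block forces $a=u$ and then $b=v$), and reduce uniqueness to the claim that concatenating reduced words for $x\in W^{J}$ and $y\in W_{J}$ stays reduced. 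You correctly isolate the one genuinely non-combinatorial input --- that $\ell(x\sigma)>\ell(x)$ for $x\in W^{J}$ and $\sigma$ a reflection of $W_{J}$ --- and your proof of it in the geometric representation is right: it combines the standard equivalence $\ell(wt)>\ell(w)\iff w\beta_{t}>0$ with the characterization of $W^{J}$ as the set of $x$ with $x\alpha_{s}>0$ for all $s\in J$, plus the fact that a root with nonnegative simple-root coordinates is positive. The derivation of (ii) from (i) by applying (i) to $w^{-1}$ and using $D_{R}(x)=D_{L}(x^{-1})$ and $\ell(x)=\ell(x^{-1})$ is likewise standard and correct. In short: no gap, just a proof supplied where the paper gives only a citation.
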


Note that $v\leq w$  implies  both $v^J\leq w^J$  and ${^J\! v}\leq \,{^J\!w}$.

The \emph{Hecke algebra} of W, denoted $\mathcal H(W)$,  is the $\mathbb Z[q^{\frac{1}{2}},q^{-\frac{1}{2}}]$-algebra generated by $\{T_s : s \in S\}$ subject to the braid relations 
$$\underbrace{\cdots T_sT_rT_s}_{\text{$m(s,r)$ terms}}= \underbrace{\cdots T_rT_sT_r}_{\text{$m(s,r)$ terms}}\quad \text{ for all $s,r \in S$} $$
and the quadratic relations
$$T^2_s=(q-1)T_s+q \quad \text{ for all $s \in S$}.$$
For $w\in W$, denote by $T_w$ the product $T_{s_1}T_{s_2}\cdots T_ {s_k}$, where $s_1 s_2\cdots s_k$ is a reduced expression for $w$. The element $T_w$ is independent from the chosen reduced expression. The Hecke algebra $\mathcal H(W)$ is the free $\mathbb Z[q^{\frac{1}{2}},q^{-\frac{1}{2}}]$-module having the set $\{T_w : w \in W\}$ as a basis and multiplication uniquely determined by
$$ T_sT_w =\left\{
\begin{array}{ll}
T_{sw}, & \text{ if $sw >w$,} \\
qT_{sw} + (q-1)T_w, & \text{  if  $sw <w$,}
\end{array}
\right.
$$
for all $w \in W$ and $s\in S$. 
\bigskip

Recall that, given $w\in W$, we say that $M$ is a matching of $w$ if $M$ is a matching of the lower Bruhat interval $[e,w]$.
For \( s\in D_{R}(w) \), we have  a matching 
\(\rho_{s}  \) of $w$  
defined by   \( \rho_{s}(u)=us \), for all $u \in [e,w]$. Symmetrically,  for \( s\in D_{L}(w)$, we have  a matching 
$\lambda_{s}$ of $w$ 
defined by    $\lambda_{s}(u)=su$, for all $u \in [e,w]$. By
the Lifting Property (Lemma~\ref{ll}),  such $\rho_s$ and $\lambda_s$ are special matchings of $w$. 
We call these matchings, respectively,  \emph{right} and \emph{left multiplication
matchings}.

We refer to \cite{CM1} and \cite{CM2}  for more details concerning special matchings of Coxeter systems.

\section{Orbits in pircons}
\label{orbits}
In this section, we provide some easy results on pircons that are needed later. 

We say that an interval $[u, v]$ in a poset $P$ is {\em dihedral} if it is isomorphic to an interval in a Coxeter group with  two Coxeter generators, ordered by Bruhat order (see Figure~\ref{diheinte}).
\begin{figure}[h]
 \setlength{\unitlength}{8mm}
\begin{center}
\begin{picture}(12,6)
\thicklines
\put(0,0){\line(0,1){1}}
\put(0,0){\circle*{0.15}}
\put(0,1){\circle*{0.15}}

\put(3,0){\line(1,1){1}}
\put(3,0){\line(-1,1){1}}
\put(4,1){\line(-1,1){1}}
\put(2,1){\line(1,1){1}}
\put(3,0){\circle*{0.15}}
\put(3,2){\circle*{0.15}}
\put(2,1){\circle*{0.15}}
\put(4,1){\circle*{0.15}}

\put(7,0){\line(1,1){1}}
\put(7,0){\line(-1,1){1}}
\put(8,1){\line(0,1){1.5}}
\put(6,1){\line(0,1){1.5}}
\put(6,2.5){\line(1,1){1}}
\put(8,2.5){\line(-1,1){1}}
\put(6,1){\line(4,3){2}}
\put(8,1){\line(-4,3){2}}
\put(7,0){\circle*{0.15}}
\put(8,1){\circle*{0.15}}
\put(6,1){\circle*{0.15}}
\put(6,2.5){\circle*{0.15}}
\put(8,2.5){\circle*{0.15}}
\put(7,3.5){\circle*{0.15}}

\put(11,0){\line(1,1){1}}
\put(11,0){\line(-1,1){1}}
\put(12,1){\line(0,1){3}}
\put(10,1){\line(0,1){3}}
\put(10,1){\line(4,3){2}}
\put(12,1){\line(-4,3){2}}
\put(10,2.5){\line(4,3){2}}
\put(12,2.5){\line(-4,3){2}}
\put(10,4){\line(1,1){1}}
\put(12,4){\line(-1,1){1}}
\put(11,0){\circle*{0.15}}
\put(12,1){\circle*{0.15}}
\put(10,1){\circle*{0.15}}
\put(12,2.5){\circle*{0.15}}
\put(10,2.5){\circle*{0.15}}
\put(10,4){\circle*{0.15}}
\put(12,4){\circle*{0.15}}
\put(11,5){\circle*{0.15}}

\end{picture}
\end{center}
\caption{\label{diheinte} Dihedral intervals of rank 1,2,3,4}
\end{figure}
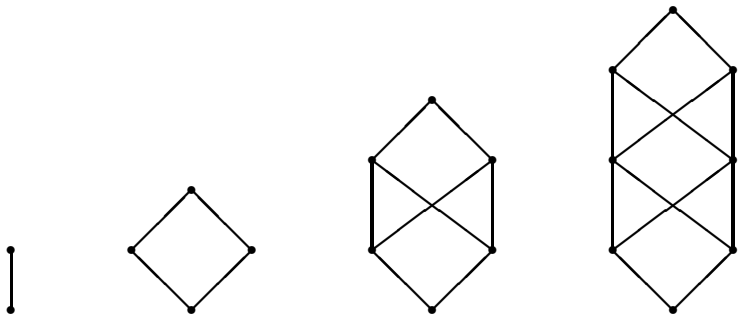

 The proof of the following result is straightforward from the Lifting Property of special partial matchings (Lemma \ref{lifting}).
\begin{lem}
\label{restringe}
Let $M$ be a special partial matching of a poset $P$, and $x,y \in P$, with $x\leq y$.  If $M(y)\leq y$ and $M(x) \geq x$, then $M$ restricts to a partial matching of the interval $[x,y]$ (which is special if and only if $M(y)\lhd y$).
\end{lem}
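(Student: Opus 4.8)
The plan is to show three things in sequence: that $M$ maps $[x,y]$ to itself, that the restriction is a partial matching in the sense that $\{z,M(z)\}$ is a Hasse edge of $[x,y]$ whenever $M(z)\neq z$, and finally that this restriction satisfies the axioms of a special partial matching precisely when $M(y)\lhd y$. Throughout I would exploit that $M$ is an involution on $P$, so the only real content is that $M$ preserves the interval and that the covering-compatibility axiom survives the restriction.

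First I would check invariance of $[x,y]$ under $M$. Take $z\in[x,y]$, so $x\leq z\leq y$. Applying Lemma~\ref{lifting} to the pair $x\leq y$ (with $M(y)\leq y$) in the case $M(x)\geq x$ gives $x\leq M(y)\leq y$, so $M(y)\in[x,y]$; more usefully, for a general $z\in[x,y]$ I apply Lemma~\ref{lifting}(i) to $z\leq y$ to get $M(z)\leq y$, and I apply part~(iii) of the same lemma to $x\leq z$ (using $M(x)\geq x$) to get $x\leq M(z)$ when $z>x$; the boundary cases $z=x$ and $z=y$ are handled directly by the hypotheses $M(x)\geq x$ (so $M(x)\in[x,y]$ once we also know $M(x)\leq y$, which is Lemma~\ref{lifting}(i) applied to $x\leq y$) and $M(y)\leq y$ together with $x\leq M(y)$. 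Hence $M(z)\in[x,y]$ for every $z\in[x,y]$, so $M$ restricts to an involution of $[x,y]$. Since $\{z,M(z)\}$ is a Hasse edge of $P$ and both endpoints lie in the interval, it is also a Hasse edge of $[x,y]$, so the restriction is a partial matching of $[x,y]$.

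Next I would verify the special-partial-matching axioms for the restriction, viewing $[x,y]$ as a finite poset with maximum $\hat1=y$. The middle axiom — that for each $z$ we have $M(z)\lhd z$, $M(z)=z$, or $z\lhd M(z)$ — is inherited verbatim from $M$ being a special partial matching of $P$, since covering relations among elements of $[x,y]$ are the same whether computed in $[x,y]$ or in $P$. The compatibility axiom — if $z\lhd w$ in $[x,y]$ and $M(z)\neq w$ then $M(z)<M(w)$ — is likewise inherited, because $z\lhd w$ in $[x,y]$ is the same as $z\lhd w$ in $P$. The one axiom that is genuinely a restriction phenomenon is the first: $M(\hat1)\lhd\hat1$, i.e.\ $M(y)\lhd y$. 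This is exactly the extra hypothesis offered, and it is clearly also necessary: if the restriction is a special partial matching of $[x,y]$ then by definition $M(y)=M(\hat1_{[x,y]})\lhd\hat1_{[x,y]}=y$. (Note $M(y)=y$ is impossible here since $M(y)\leq y$ is assumed and a fixed point is not below itself; so the relevant dichotomy is $M(y)\lhd y$ versus $M(y)\lhd\!\!\!\!/\,\, y$ with $M(y)<y$ non-covering, which can indeed happen.)

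The main obstacle — really the only subtle point — is making sure the boundary elements $x$ and $y$ are correctly placed by $M$ inside the interval, and this is precisely what the two hypotheses $M(x)\geq x$ and $M(y)\leq y$ are designed to guarantee via the Lifting Property of special partial matchings (Lemma~\ref{lifting}): without $M(x)\geq x$ one could have $M(x)<x$ falling outside $[x,y]$, and without $M(y)\leq y$ the element $y$ could be matched upward out of the interval. Once invariance is secured, everything else is a routine transfer of axioms, together with the observation that the first axiom $M(\hat1)\lhd\hat1$ translates exactly to the stated ``if and only if'' condition $M(y)\lhd y$. I would write the argument in roughly a half-page, citing Lemma~\ref{lifting} for each of the inclusions $x\leq M(z)$ and $M(z)\leq y$.
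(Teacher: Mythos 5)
Your argument is exactly the one the paper intends: the paper gives no written proof, saying only that the statement ``is straightforward from the Lifting Property of special partial matchings (Lemma~\ref{lifting})'', and your reduction to interval-invariance via parts (i) and (iii) of that lemma, followed by a routine transfer of the axioms of Definition~\ref{accoppiamento parziale}, is that straightforward argument. Two small quibbles. First, when you invoke Lemma~\ref{lifting}(iii) for the pair $x\leq z$ to get $x\leq M(z)$, that part of the lemma presupposes $M(z)\leq z$; you should split off the (trivial) case $z\lhd M(z)$ or $M(z)=z$, where $x\leq z\leq M(z)$ gives the inclusion directly. Second, your parenthetical claim that $M(y)=y$ is impossible is false --- the hypothesis $M(y)\leq y$ certainly allows $M(y)=y$, and in that case the restriction is a partial matching fixing $\hat1_{[x,y]}$ which fails the first axiom --- but this does not damage the ``if and only if'', since the only-if direction needs nothing more than the first axiom of a special partial matching.
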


Given a pircon $P$ and two special partial matchings  $M$ and $N$ of $P$,  we denote by $\langle M,N \rangle $ the group of permutations of $P$ generated by $M$ and $N$. Furthermore, we denote by $\langle M,N \rangle (u)$ the orbit of an element $u\in P$ under the action of $\langle M,N \rangle $.

We say that an orbit $\mathcal O$ of the action of $\langle M,N \rangle $ is 
\begin{itemize}
\item \emph{dihedral}, if  $\mathcal O$ is isomorphic to a dihedral interval and $u\notin \{M(u),N(u)\}$ for all $u\in \mathcal O$,
\item \emph{chain-like}, if  $\mathcal O$ is isomorphic to a chain, $\hat{0}_{\mathcal O} \in \{M(\hat{0}_{\mathcal O}),N(\hat{0}_{\mathcal O})\}$, and $\hat{1}_{\mathcal O} \in \{M(\hat{1}_{\mathcal O}),N(\hat{1}_{\mathcal O})\}$.
\end{itemize}

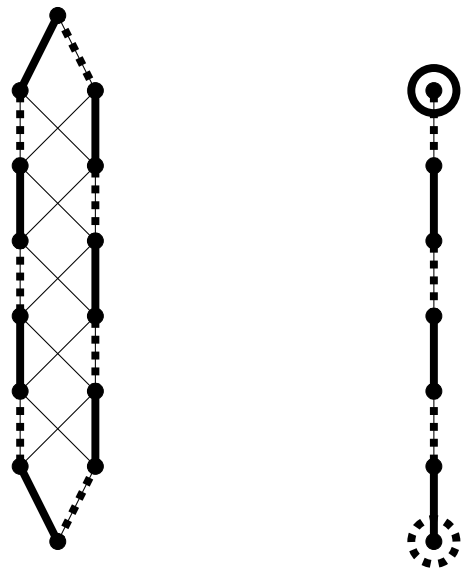
\begin{figure}[h]
\begin{center}
$$
\begin{tikzpicture}

\draw[fill=black]{(0,0) circle(3pt)};
\draw[fill=black]{(-0.5,1) circle(3pt)};
\draw[fill=black]{(-0.5,2) circle(3pt)};
\draw[fill=black]{(-0.5,3) circle(3pt)};
\draw[fill=black]{(0.5,1) circle(3pt)};
\draw[fill=black]{(0.5,2) circle(3pt)};
\draw[fill=black]{(0.5,3) circle(3pt)};
\draw[fill=black]{(-0.5,4) circle(3pt)};
\draw[fill=black]{(-0.5,5) circle(3pt)};
\draw[fill=black]{(0.5,4) circle(3pt)};
\draw[fill=black]{(0.5,5) circle(3pt)};
\draw[fill=black]{(0,7) circle(3pt)};
\draw[fill=black]{(-0.5,6) circle(3pt)};
\draw[fill=black]{(0.5,6) circle(3pt)};

\draw[dashed, line width=3pt]{(0,7)--(0.5,6)}; 
\draw[dashed, line width=3pt]{(-0.5,5)--(-0.5,6)}; 
\draw[dashed, line width=3pt]{(-0.5,4)--(-0.5,3)}; 
\draw[dashed, line width=3pt]{(-0.5,2)--(-0.5,1)};
 \draw[dashed, line width=3pt]{(0,0)--(0.5,1)}; 
 \draw[dashed, line width=3pt]{(0.5,2)--(0.5,3)}; 
\draw[dashed, line width=3pt]{(0.5,4)--(0.5,5)}; 
 
  \draw[line width=3pt]{(0,7)--(-0.5,6)}; 
 \draw[line width=3pt]{(0.5,5)--(0.5,6)}; 
\draw[line width=3pt]{(0.5,4)--(0.5,3)}; 
\draw[line width=3pt]{(0.5,2)--(0.5,1)};
 \draw[line width=3pt]{(0,0)--(-0.5,1)}; 
 \draw[line width=3pt]{(-0.5,2)--(-0.5,3)}; 
\draw[line width=3pt]{(-0.5,4)--(-0.5,5)};

 \draw{(-0.5,1)--(0.5,2)}; 
 \draw{(-0.5,3)--(0.5,2)}; 
 \draw{(-0.5,3)--(0.5,4)}; 
 \draw{(-0.5,5)--(0.5,4)}; 
 \draw{(0.5,1)--(-0.5,2)}; 
 \draw{(0.5,3)--(-0.5,2)}; 
 \draw{(0.5,3)--(-0.5,4)}; 
 \draw{(0.5,5)--(-0.5,4)}; 
  \draw{(0.5,6)--(-0.5,5)}; 
 \draw{(-0.5,6)--(0.5,5)}; 
 
 \draw{(0.5,2)--(0.5,3)}; 
 \draw{(0.5,5)--(0.5,4)}; 
  \draw{(-0.5,1)--(-0.5,2)}; 
 \draw{(-0.5,3)--(-0.5,4)}; 
  \draw{(-0.5,5)--(-0.5,6)}; 
  \draw{(0.5,1)--(0,0)}; 
 \draw{(0,7)--(0.5,6)}; 
 
 \draw[fill=black]{(5,1) circle(3pt)};
\draw[fill=black]{(5,2) circle(3pt)};
\draw[fill=black]{(5,3) circle(3pt)};
\draw[fill=black]{(5,4) circle(3pt)};
\draw[fill=black]{(5,0) circle(3pt)};
\draw[fill=black]{(5,6) circle(3pt)};
\draw[fill=black]{(5,5) circle(3pt)};

\draw[dashed, line width=3pt]{(5,0) circle (.3)};
 \draw[line width=3pt, line width=3pt]{(5,0)--(5,1)}; 
 \draw[line width=3pt, line width=3pt]{(5,2)--(5,3)}; 
  \draw[line width=3pt, line width=3pt]{(5,4)--(5,5)}; 
\draw[dashed, line width=3pt]{(5,2)--(5,1)};
\draw[dashed, line width=3pt]{(5,3)--(5,4)};
\draw[dashed, line width=3pt]{(5,5)--(5,6)};
\draw[line width=3pt]{(5,6) circle (.3)};
\draw{(5,0)--(5,6)}; 
\end{tikzpicture}$$
\end{center}
\caption{\label{orbitediedraliecatene} A rank 7 dihedral orbit  and a rank 6 chain-like orbit of  $\langle M,N \rangle $, where $M$ and $N$ are colored in solid and dashed black}
\end{figure}

Note that an orbit with two elements $w$ and  $N(w)=M(w)\neq w$ is dihedral, whereas an orbit with two elements $w=N(w)$ and  $M(w)=NM(w)\neq w$ is chain-like (see Figure~\ref{rango1}). This is the only case when a dihedral orbit and a chain-like orbit are isomorphic as posets.

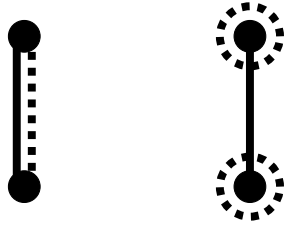
\begin{figure}[h]
\begin{center}
$$
\begin{tikzpicture}
 \draw[fill=black]{(2,2) circle(6pt)};
\draw[fill=black]{(2,0) circle(6pt)};
 \draw[line width=3pt]{(1.9,0)--(1.9,2)}; 
  \draw[line width=3pt, dashed]{(2.1,0)--(2.1,2)}; 
 \draw[fill=black]{(5,2) circle(6pt)};
\draw[fill=black]{(5,0) circle(6pt)};
\draw[dashed, line width=3pt]{(5,0) circle (.4)};
 \draw[line width=3pt]{(5,0)--(5,2)}; 
\draw[line width=3pt, dashed]{(5,2) circle (.4)};
\end{tikzpicture}$$
\end{center}
\caption{\label{rango1} A dihedral orbit of rank 1  and a chain-like orbit of rank 1}
\end{figure}

\begin{lem}
\label{noorbita}
Fix a pircon $P$.  Let $M$ and $N$ be two special partial matchings of an element $w$ in $P$. 
Every orbit $\mathcal O$ of $\langle M,N \rangle $ is either dihedral or chain-like. 
\end{lem}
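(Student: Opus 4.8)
The plan is to analyze a single orbit $\mathcal O$ of $\langle M,N\rangle$ acting on $P_{\leq w}$ by looking at its extremal elements and at how the two matchings alternate along it. First I would observe that $\mathcal O$ is finite (since $P_{\leq w}$ is finite), so it has maximal elements and minimal elements; the goal is to show there is a unique maximum $\hat1_{\mathcal O}$ and a unique minimum $\hat0_{\mathcal O}$, and that each of $M,N$ either ``moves an element down'', ``moves it up'', or ``fixes it'', consistently enough to force the two listed shapes.

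The key structural step is the following dichotomy at the top. Suppose $z\in\mathcal O$ is maximal in $\mathcal O$. Then $M(z)\leq z$ and $N(z)\leq z$ (they cannot go strictly up, by maximality, and the middle clause of Definition~\ref{accoppiamento parziale} rules out anything else). I would then use the lifting property (Lemma~\ref{lifting}) applied inside $P_{\leq w}$ — or rather inside $P_{\leq z}$, noting that $M$ and $N$ restrict to special partial matchings there via Lemmas~\ref{lifting} and \ref{restringe} — to compare any other element $x\in\mathcal O$ with $z$: walking the orbit path from $x$ to $z$ by alternately applying $M$ and $N$, each step either keeps us below $z$ or, in the fixed-point case, stays put, so $x\leq z$. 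This shows $z=\hat1_{\mathcal O}$ is the unique maximum. A symmetric argument (or passing to the ``reverse'' situation) would be more delicate for the minimum, since special partial matchings are not symmetric top-to-bottom; here I would instead argue that a minimal element $z_0\in\mathcal O$ satisfies $z_0\leq M(z_0)$ or $M(z_0)=z_0$, and likewise for $N$, and again propagate comparability along the orbit path to get $z_0=\hat0_{\mathcal O}$.

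Once we know $\mathcal O=[\hat0_{\mathcal O},\hat1_{\mathcal O}]\cap\mathcal O$ has a top and bottom, I would split into cases according to whether $M$ and $N$ fix $\hat1_{\mathcal O}$ and $\hat0_{\mathcal O}$. If neither matching fixes $\hat1_{\mathcal O}$, then $M(\hat1_{\mathcal O})\lhd\hat1_{\mathcal O}$ and $N(\hat1_{\mathcal O})\lhd\hat1_{\mathcal O}$ (the $M(\hat1)\lhd\hat1$ clause), and restricting $M,N$ to the interval $[\hat0_{\mathcal O},\hat1_{\mathcal O}]$ via Lemma~\ref{restringe} yields two \emph{special matchings} (no fixed points, since a fixed point of $M$ in the orbit would under repeated $N,M$-alternation eventually force a fixed point of $M$ or $N$ at an extremum — this is the place I must be careful). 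Then the orbit is a zircon-interval matched by two special matchings; by the standard dihedral-interval argument for special matchings (the orbit of two special matchings on an interval is a dihedral interval — the same reasoning used for Coxeter groups, cf.\ \cite{CM1}), $\mathcal O$ is a dihedral interval with $u\notin\{M(u),N(u)\}$ throughout, i.e.\ dihedral. If instead one of the matchings, say $N$, fixes $\hat1_{\mathcal O}$, I would show $M$ cannot also fix it (else $\hat1_{\mathcal O}$ is its own orbit — a degenerate case I can absorb as a ``rank $0$'' chain-like orbit, or note $w\neq\hat0$ excludes it in the relevant setting), so $M(\hat1_{\mathcal O})\lhd\hat1_{\mathcal O}$; descending, the orbit becomes a ``zigzag'' where $M$-edges go down and $N$ alternately fixes then we reapply $M$, forcing $\mathcal O$ to be a chain with both extrema fixed by one of the matchings — this is exactly chain-like.

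The main obstacle I anticipate is the bookkeeping in the mixed case — showing that a fixed point of $M$ (or $N$) can occur \emph{only} at an extremal element of $\mathcal O$, and that it cannot occur at both extrema for the same matching while being dihedral, nor produce a branching that breaks the chain/interval structure. Concretely: if $M(x)=x$ for some non-extremal $x\in\mathcal O$, then $x<\hat1_{\mathcal O}$ and applying Lemma~\ref{lifting}(iii) with the pair $x<\hat1_{\mathcal O}$ (using whichever matching among $M,N$ sends $\hat1_{\mathcal O}$ down) forces $x\leq M(\hat1_{\mathcal O})$ or a similar constraint, and iterating this along the orbit together with the involution property should contradict $x$ being matched upward by the other matching infinitely, i.e.\ pin down that the fixed points sit only at the ends and that the alternation pattern is forced. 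Getting this alternation argument clean — essentially a careful induction on $\rho(\hat1_{\mathcal O})-\rho(\hat0_{\mathcal O})$ using the lifting property at each comparability step — is the technical heart; the rest is the comparatively routine identification of the resulting poset with a dihedral interval or a chain.
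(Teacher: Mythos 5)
Your overall strategy---pick a maximal element $z$ of the orbit, note that the trichotomy in Definition~\ref{accoppiamento parziale} forces $M(z)\leq z$ and $N(z)\leq z$, and then track the alternating application of $M$ and $N$ downward---is the same as the paper's. But the step you yourself flag as ``the technical heart'' is precisely the content of the lemma, and your sketch does not close it. The claim that needs proof is: starting from $z$ and alternately applying $N,M,N,\dots$, every step is a covering relation going \emph{down} (no interior element is fixed, and the sequence never turns back upward) until the two alternating words meet or a fixed point is reached at the very bottom. The paper proves this by contradiction using only the third axiom of a special partial matching: if, say, $MN(z)\lhd NMN(z)$, then applying that axiom twice lifts the upward turn step by step ($MNMN(z)\rhd N(z)$, hence $MNMN(z)\rhd NMN(z)$, hence $NMNMN(z)\rhd MNMN(z)$ and $NMNMN(z)\rhd z$), producing an orbit element strictly above $z$ and contradicting maximality. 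Your proposal gestures at ``iterating this along the orbit together with the involution property'' via Lemma~\ref{lifting}(iii), but does not carry out this lift, and without it neither the uniqueness of the maximum nor the absence of interior fixed points is established.

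Two further points. First, your fallback for the dihedral case---restrict $M$ and $N$ to $[\hat0_{\mathcal O},\hat1_{\mathcal O}]$ and invoke the known fact that two special matchings generate dihedral orbits---presupposes exactly what is at issue: Lemma~\ref{restringe} only gives special \emph{partial} matchings of the interval, so you must first rule out fixed points in the orbit, and the fact that $\mathcal O$ equals the interval $[\hat0_{\mathcal O},\hat1_{\mathcal O}]$ is Lemma~\ref{tuttaorbita}, which in the paper is deduced \emph{from} the present lemma, not available before it. Second, your description of the chain-like case (``$N$ alternately fixes then we reapply $M$'') misstates the structure: in a chain-like orbit only the two extrema are fixed, each by exactly one of the matchings, and every interior element is moved down by one matching and up by the other. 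So the proposal is not wrong in spirit, but as written it leaves the essential argument unproved and leans on facts that are either unavailable at this stage or equivalent to the statement being proved.
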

\begin{proof}
Fix an orbit $\mathcal O$ and let $z$ be a maximal element in $\mathcal O$. 

Suppose $z\rhd M(z)$ and $z\rhd N(z)$. If $M(z)=N(z)$ then $\mathcal O$ is a dihedral orbit of rank 1; otherwise  $N(z)\rhd NM(z)$ and $M(z)\rhd MN(z)$ by the definition of a special partial matching. If $NM(z)=MN(z)$ then $\mathcal O$ is a dihedral orbit of rank 2; otherwise $MN(z)\rhd MNM(z)$ and $NM(z)\rhd NMN(z)$ by the definition of a special partial matching. We iterate this argument and obtain a dihedral orbit of rank $r$, where $r$ is the smallest number such that  $\underbrace{\cdots MNM}_{\text{$r$ letters}}(z)= \underbrace{\cdots NMN}_{\text{$r$ letters}}(z)$.

Suppose that one of the two matching fixes $z$, say $z= M(z)$. If $z= N(z)$ then $\mathcal O$ is a chain of rank 0; otherwise  $z\rhd N(z)$.  If $N(z)= MN(z)$ then $\mathcal O$ is a chain of rank 1; otherwise  $N(z)\rhd MN(z)$ by the definition of a special partial matching. 

If $MN(z)= NMN(z)$ then $\mathcal O$ is a chain of rank 2. Otherwise we claim that $MN(z) \rhd NMN(z)$. Indeed, suppose by contradiction that $MN(z) \lhd NMN(z)$: by the definition of a special partial matching $MNMN(z)\rhd N(z)$ and so  $MNMN(z)\rhd NMN(z)$. Again by the definition of a special partial matching, $NMNMN(z)\rhd MNMN(z)$ and so $NMNMN(z)\rhd z$. The last covering relation  is in contradiction with the fact that  $z$ is a maximal element in $\mathcal O$.

We iterate this argument and obtain a chain-like orbit of rank $r-1$, where $r$ is the smallest number such that  $\underbrace{\cdots MNM}_{\text{$r$ letters}}(z)= \underbrace{\cdots NMN}_{\text{$r$ letters}}(z)$.
\end{proof}

\begin{lem}
\label{tuttaorbita}
Fix a pircon $P$.  Let $M$ and $N$ be two special partial matchings of an element $w$ in $P$. 
Every orbit $\mathcal O$ of $\langle M,N \rangle $ is an interval in $P$, i.e. $\mathcal O=[\hat{0}_{\mathcal O},\hat{1}_{\mathcal O}]$.
\end{lem}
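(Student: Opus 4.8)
The plan is to show that an orbit $\mathcal{O}$, which by Lemma~\ref{noorbita} is either dihedral or chain-like, is an order ideal downward and a dual order ideal upward within the interval $[\hat{0}_{\mathcal{O}}, \hat{1}_{\mathcal{O}}]$ of $P$, hence equals that interval. Concretely, writing $\hat{0} = \hat{0}_{\mathcal{O}}$ and $\hat{1} = \hat{1}_{\mathcal{O}}$ (these exist since $\mathcal{O}$ is isomorphic to a dihedral interval or a chain), I must prove: (a) $\mathcal{O} \subseteq [\hat{0}, \hat{1}]$, which is immediate from the structure of $\mathcal{O}$ given by Lemma~\ref{noorbita}; and (b) if $x \in P$ with $\hat{0} \leq x \leq \hat{1}$, then $x \in \mathcal{O}$. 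Part (b) is the real content.

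For (b), the natural approach is descending induction on $\rho(\hat{1}) - \rho(x)$ (equivalently, induction on the rank within the orbit, working down from $\hat{1}$), using the lifting property of special partial matchings (Lemma~\ref{lifting}) to ``push'' a hypothetical element $x \in [\hat{0},\hat{1}] \setminus \mathcal{O}$ around. First I would observe that $\hat{1}$ is matched downward by at least one of $M, N$ — in the dihedral case both send $\hat{1}$ down, in the chain-like case at least one does — so without loss of generality $M(\hat{1}) \lhd \hat{1}$. Then for $x < \hat{1}$, Lemma~\ref{lifting} applied with $y = \hat{1}$ gives $M(x) \leq \hat{1}$; if $M(x) \leq x$ then $M(x) < M(\hat{1})$, and if $M(x) \geq x$ then $x \leq M(\hat{1})$. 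In either case $M$ maps $[\hat{0},\hat{1}] \cap P_{<\hat{1}}$ into $[\hat{0}, \hat{1}]$ (one must check $M(x) \geq \hat{0}$: since $\hat{0} \in \mathcal{O}$ is matched by both $M$ and $N$ and lies at the bottom, $\hat{0} \leq M(x)$ follows because $\hat0$ being the orbit minimum is fixed or matched upward by $M$, so Lemma~\ref{lifting}(iii)-type reasoning or the fact that $M$ is order-related in covers forces it). So $M$ restricts to an involution of $[\hat{0},\hat{1}]$. Symmetrically, choosing generators appropriately at each rank, both $M$ and $N$ — or the relevant one — preserve $[\hat 0,\hat1]$.

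With that in hand, the induction runs as follows. Suppose every element of $[\hat{0},\hat{1}]$ of rank $> k$ lies in $\mathcal{O}$, and let $x \in [\hat{0},\hat{1}]$ have $\rho(x) = k < \rho(\hat{1})$. Pick $y \in \mathcal{O}$ with $x \lhd y$; such a $y$ exists because $x < \hat{1}$ and $P$ is graded, so $x$ is covered by something of rank $k+1$ in $[\hat0,\hat1]$, which by inductive hypothesis is in $\mathcal{O}$ — here I use that $\mathcal{O}$ is an interval-shaped subposet so that $x \leq \hat1$ forces a saturated chain from $x$ up to $\hat1$ inside $[\hat0,\hat1]$, whose first step lands in $\mathcal{O}$ at rank $k+1$. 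Now $y \in \mathcal{O}$, and at rank $\rho(y) = k+1$ one of $M, N$, say $M$, sends $y$ downward (this is the orbit structure: every non-top element of a dihedral orbit is hit downward by exactly one generator, and similarly in the chain-like case except possibly at $\hat0$; since $\rho(y) > k \geq 0$... if $k=0$ then $x = \hat0 \in \mathcal{O}$ trivially, so we may assume $\rho(y) \geq 2$ or handle $k=0$ separately). If $M(y) = x$ we are done. Otherwise $M(x) \neq y$ and $x \lhd y$ with $M(y) \lhd y$, so Lemma~\ref{lifting}(ii) (if $M(x) \leq x$) or (iii) (if $M(x) \geq x$) applies: in the first case $M(x) < M(y)$, and since $M(y) \in \mathcal{O}$ has rank $k$ and lies in $[\hat0,\hat1]$ with $M(x)$ strictly below it but $M$ being an involution forces $M(M(x)) = x$ has rank $k$, contradiction with $M(x)$ having rank $< k$ unless — the cleanest finish is: $M(x) \leq x$ gives $\rho(M(x)) = k - 1 < \rho(M(y)) = k$, fine, but then apply $M$ again: $x = M(M(x))$ and $M(y) = M(M(y))$ with $M(x) < M(y)$ and $M(x)$ matched upward by $M$; Lemma~\ref{lifting}(iii) then yields $M(x) \leq y$... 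I would instead argue directly that $\langle M,N\rangle(x)$ is itself an orbit meeting $[\hat0,\hat1]$ and compare it with $\mathcal{O}$, or — simplest — note $M$ and $N$ both restrict to special or near-special matchings of $[\hat0,\hat1]$ by Lemma~\ref{restringe}, so $\langle M,N\rangle$ acts on $[\hat0,\hat1]$, the orbit of $x$ under this restricted action contains an element $\hat1_{\mathcal{O}'}$ which by maximality considerations must be $\hat1$, forcing $x \in \mathcal{O}$.

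**The main obstacle** I anticipate is the bookkeeping at the boundary of the orbit — specifically ensuring that $M$ and $N$ genuinely preserve the interval $[\hat{0},\hat{1}]$ (the subtlety being elements $x$ with $M(x) \geq x$ near the top, and the behavior at $\hat{0}$ where a generator may fix or raise it), and cleanly distinguishing the dihedral from the chain-like case when invoking ``one generator sends $y$ down.'' I would structure the final proof to first establish, via Lemma~\ref{restringe} and Lemma~\ref{lifting}, that $\langle M, N\rangle$ restricts to a group action on $[\hat{0},\hat{1}]$, reducing the whole statement to the claim that this restricted action is transitive — equivalently, that $[\hat0,\hat1]$ contains no proper nonempty subposet closed under $M$ and $N$ — which then follows from Lemma~\ref{noorbita} applied inside $[\hat0,\hat1]$ together with a rank count matching $\mathcal{O}$ against $[\hat0,\hat1]$.
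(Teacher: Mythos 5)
Your overall strategy is the paper's: show that $M$ and $N$ stabilize $[\hat{0}_{\mathcal O},\hat{1}_{\mathcal O}]$ (Lemma~\ref{restringe} plus the orbit structure from Lemma~\ref{noorbita}), locate an element $x$ of $[\hat{0}_{\mathcal O},\hat{1}_{\mathcal O}]\setminus\mathcal O$ covered by some $y\in\mathcal O$ with, say, $M(y)\lhd y$ (your top-down induction and the paper's choice of a maximal-rank element of the complement produce the same configuration), and use the special-partial-matching condition to push $x$ downward. The first step of that pushdown is correct: since orbits are disjoint, $M(y)\neq x$, hence $M(x)\neq y$, and the third condition in Definition~\ref{accoppiamento parziale} together with a rank count forces $M(x)\lhd x$ and $M(x)\lhd M(y)$.

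But there is a genuine gap exactly where the content lies: you never complete the descent, and the two finishes you sketch instead do not work. The ``apply $M$ again'' attempt goes nowhere, since applying the involution $M$ to $M(x)$ just returns $x$. The ``maximality considerations'' finish is essentially circular: the orbit $\mathcal O'=\langle M,N\rangle(x)$ is indeed contained in $[\hat{0}_{\mathcal O},\hat{1}_{\mathcal O}]$ and disjoint from $\mathcal O$, but nothing you have proved prevents it from being a small dihedral or chain-like orbit sitting strictly inside the interval; asserting that its top must be $\hat{1}_{\mathcal O}$ is just a restatement of the lemma. What is missing is the iteration: walk down a saturated chain $y\rhd M(y)\rhd NM(y)\rhd\cdots\rhd\hat{0}_{\mathcal O}$ inside $\mathcal O$, alternating the generators as dictated by Lemma~\ref{noorbita}, and in parallel produce $x\rhd M(x)\rhd NM(x)\rhd\cdots$, each term one rank below the corresponding orbit element. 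The observation that keeps the iteration alive is again disjointness of orbits: at every stage the newly produced element lies outside $\mathcal O$, so the relevant matching cannot pair it with the orbit element above it, and the special-partial-matching condition applies once more. When the orbit chain reaches $\hat{0}_{\mathcal O}$, the parallel chain has produced an element of rank $\rho(\hat{0}_{\mathcal O})-1$, which cannot lie in $[\hat{0}_{\mathcal O},\hat{1}_{\mathcal O}]$ --- contradicting the fact (which you did establish) that $M$ and $N$ map $[\hat{0}_{\mathcal O},\hat{1}_{\mathcal O}]$ into itself. This is precisely how the paper concludes.
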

\begin{proof}
Fix an orbit $\mathcal O$ and, by contradiction, suppose $[\hat{0}_{\mathcal O},\hat{1}_{\mathcal O}]\setminus \mathcal O \neq \emptyset$. Let $z\in  [\hat{0}_{\mathcal O},\hat{1}_{\mathcal O}]\setminus \mathcal O$ be an element that is covered by some element, say $c$,  in $\mathcal O$. Since $c\neq \hat{0}_{\mathcal O}$, Lemma~\ref{noorbita} implies that $c$  must go down with one of the two matchings, say $M$, and the definition of a special partial matching implies $M(z)\lhd z$ and $M(z)\lhd  M(c)$. If $M(c) \neq \hat{0}_{\mathcal O}$ then $NM(c)\lhd M(c)$ and the definition of a special partial matching implies $NM(z)\lhd M(z)$ and  $NM(z)\lhd NM(c)$. By iteration, we find a number $k>1$ and a chain $z=z_0\rhd z_1=M(z)\rhd z_2=NM(z)\rhd \cdots \rhd z_{k-1}=\underbrace{\cdots MNM}_{\text{$(k-1)$ letters}}(z)\rhd z_k=\underbrace{\cdots MNM}_{\text{$k$ letters}}(z)$ with $z_i \in \{M(z_{i-1}), N(z_{i-1})\}$ for all $i\in [1,k]$, $z_{k-1}\in [\hat{0}_{\mathcal O},\hat{1}_{\mathcal O}]$,  and $z_{k}\notin [\hat{0}_{\mathcal O},\hat{1}_{\mathcal O}]$. This is a contradiction since both $M$ and $N$ restrict to matchings of  $[\hat{0}_{\mathcal O},\hat{1}_{\mathcal O}]$ by Lemma~\ref{restringe}.
\end{proof}
Notice that  Lemma~\ref{tuttaorbita} for  $\mathcal O$  dihedral (whose order complex is well known to be a PL sphere) follows also  by Theorem~\ref{intervalli in pirconi}, since a PL sphere can be a proper subcomplex neither of another PL sphere of the same dimension, nor of a PL ball of the same dimension.

\section{Hecke algebra actions}
\label{quattro}

In this section, we introduce certain representations of  Hecke algebras with two generators, which are needed in Section~\ref{KLMMpoly}.
 
Let $W_{s,r;d}$ be the dihedral Coxeter group with Coxeter generators $s$ and $r$ subject to the relation given by  $m(s,r)=d$. Let $H=\{r\}$ and $\mathfrak M^H$ be the free $\mathbb Z[q^{\frac {1}{2}},q^{-\frac {1}{2}}]$-module  with $\{m_w : w\in W^H_{s,r;d}\}$ as a basis. 

Let $x\in \{q,-1\}$. We set $L_s$ and $\Gamma_s$ to be the unique endomorphisms of $\mathfrak M^H$ satisfying
$$ L_s(m_w)= \left\{ \begin{array}{ll}
q\,m_{sw}+(q-1)\,m_w, & \mbox{if $sw  \lhd w$,} \\
m_{sw}, & \mbox{if $w\lhd sw \in W^H_{s,r;d}$,} \\
x\,m_w, & \mbox{if $w\lhd sw \notin W^H_{s,r;d}$,} 
\end{array} \right. $$
and 
$$ \Gamma_s(m_w)= \left\{ \begin{array}{ll}
m_{sw}, & \mbox{if $sw  \lhd w$,} \\
q\,m_{sw}+(q-1)\,m_w, & \mbox{if $w\lhd sw \in W^H_{s,r;d}$,} \\
x\,m_w, & \mbox{if $w\lhd sw \notin W^H_{s,r;d}$,} 
\end{array} \right. $$
for every basis element $m_w$. The endomorphisms $L_r$ and $\Gamma_r$ are defined analogously. 

For $p\in[0,d-1]$, we denote by $w_p$ the unique element in $W^H_{s,r;d}$ of length $p$: $w_p= \underbrace{\ldots s r s }_{p \text{ letters}}$. Thus $\{m_w : w\in W^H_{s,r;d}\}=\{m_{w_p} : p\in[0,d-1]\}$, and  
\begin{itemize}
\item $ L_s(m_{w_p})= \left\{ \begin{array}{ll}
q\,m_{w_{p-1}}+(q-1)\,m_{w_p}, & \mbox{if $p\in [1,d-1], \;p \text{ odd}$,} \\
m_{w_{p+1}}, & \mbox{if $p\in [0,d-1), \;p \text{ even}$,} \\
x\,m_{w_p}, & \mbox{if $p=d-1,  \;p \text{ even}$,} 
\end{array} \right. $
\item $ L_r(m_{w_p})= \left\{ \begin{array}{ll}
q\,m_{w_{p-1}}+(q-1)\,m_{w_p}, & \mbox{if $p\in (0,d-1], \;p \text{ even}$,} \\
m_{w_{p+1}}, & \mbox{if $p\in [1,d-1), \;p \text{ odd}$,} \\
x\,m_{w_p}, & \mbox{if either $p=0$, or $p=d-1 \text{ with $p$ odd}$,} 
\end{array} \right. $
\item $ \Gamma_s(m_{w_p})= \left\{ \begin{array}{ll}
m_{w_{p-1}}, & \mbox{if $p\in [1,d-1], \;p \text{ odd}$,} \\
q\,m_{w_{p+1}}+(q-1)\,m_{w_p}, & \mbox{if $p\in [0,d-1), \;p \text{ even}$,} \\
x\,m_{w_p}, & \mbox{if $p=d-1,  \;p \text{ even}$,} 
\end{array} \right. $
\item $ \Gamma_r(m_{w_p})= \left\{ \begin{array}{ll}
m_{w_{p-1}}, & \mbox{if $p\in (0,d-1], \;p \text{ even}$,} \\
q\,m_{w_{p+1}}+(q-1)\,m_{w_p}, & \mbox{if $p\in [1,d-1), \;p \text{ odd}$,} \\
x\,m_{w_p}, & \mbox{if either $p=0$, or $p=d-1 \text{ with $p$ odd}$.} 
\end{array} \right. $
\end{itemize}

Furthermore, we define an involutive automorphism $I$ of $\mathfrak M^H$ by letting
$I(m_{w_p})=m_{w_{d-p-1}}$, and we set
$$ \bar{s}= \left\{ \begin{array}{ll}
s, & \mbox{if $d$ even,} \\
r, &  \mbox{if $d$ odd,} 
\end{array} \right. 
\hspace{2cm}
\bar{r}= \left\{ \begin{array}{ll}
r, & \mbox{if $d$ even,} \\
s, & \mbox{if $d$ odd.} 
\end{array} \right.  $$
Note that $D_L(w_{d-p-1})=\{\bar{s}\}$, for all even $p\in [0,d-1)$.

\begin{thm}
\label{diagrammicommutano}
The following two diagrams are commutative:
\begin{center}
\begin{tikzcd}
\mathfrak M^H  \arrow[r, "I"] \arrow[d,  "L_s"] & \mathfrak M^H \arrow[d, "\Gamma_{\bar{s}}" ] \\
\mathfrak M^H  \arrow[r,  "I" ] & \mathfrak M^H
\end{tikzcd}
\hspace{2cm}
\begin{tikzcd}
\mathfrak M^H  \arrow[r, "I"] \arrow[d,  "L_r"] & \mathfrak M^H \arrow[d, "\Gamma_{\bar{r}}" ] \\
\mathfrak M^H  \arrow[r,  "I" ] & \mathfrak M^H
\end{tikzcd}
\end{center}
Moreover, $\mathfrak M^H$ is an $\mathcal H(W_{s,r;d})$-module with $T_s$ and $T_r$ acting as $\Gamma_s$ and $\Gamma_r$.
\end{thm}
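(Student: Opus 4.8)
The plan is to establish the two assertions in turn: first the commutativity of the two diagrams, by a direct computation on the basis $\{m_{w_p}:p\in[0,d-1]\}$; then the module structure, by transporting along $I$ the parabolic Hecke module structure carried by the operators $L_s,L_r$. Throughout I would use the explicit descriptions of $L_s,L_r,\Gamma_s,\Gamma_r$ and $I$ in terms of the elements $w_p$ that are listed just before the statement.

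For the first diagram I would check the identity $I(L_s(m_{w_p}))=\Gamma_{\bar s}(I(m_{w_p}))=\Gamma_{\bar s}(m_{w_{d-p-1}})$ by splitting into the three cases ``$p$ odd'', ``$p$ even and $p<d-1$'', and ``$p=d-1$ with $p$ even''. The relevant fact for the passage from $L_s$ to $\Gamma_{\bar s}$ is the remark preceding the statement, namely $D_L(w_{d-p-1})=\{\bar s\}$ for even $p<d-1$: when $p$ is odd, both sides equal $q\,m_{w_{d-p}}+(q-1)\,m_{w_{d-p-1}}$ (indeed $w_{d-p-1}$ then goes \emph{up} under $\bar s$, and $\bar s w_{d-p-1}=w_{d-p}$ lies in $W^H_{s,r;d}$ since its length $d-p\le d-1$ and $w_{d-p-1}$ ends in $s$ when $p\le d-2$, while $\bar s w_0=s\in W^H_{s,r;d}$ when $p=d-1$); when $p$ is even and $p<d-1$, both sides equal $m_{w_{d-p-2}}$; and when $p=d-1$ (which forces $d$ odd and $\bar s=r$), both sides equal $x\,m_{w_0}$. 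The commutativity of the second diagram follows from the entirely analogous case analysis with $L_r$ and $\Gamma_{\bar r}$.

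For the module structure it suffices to show that $\Gamma_s$ and $\Gamma_r$ satisfy the quadratic relations $\Gamma_t^2=(q-1)\Gamma_t+q$ for $t\in\{s,r\}$ and the length-$d$ braid relation $\underbrace{\cdots\Gamma_s\Gamma_r\Gamma_s}_{d}=\underbrace{\cdots\Gamma_r\Gamma_s\Gamma_r}_{d}$. Here I would invoke the $L$-operators: $(\mathfrak M^H,L_s,L_r)$ is exactly the parabolic Hecke module of $W_{s,r;d}$ relative to $\{r\}$ with parameter $x\in\{q,-1\}$ (equivalently $\mathcal H(W_{s,r;d})\otimes_{\mathcal H(W_{\{r\}})}$ of the one–dimensional $\mathcal H(W_{\{r\}})$–module on which $T_r$ acts by $x$), so $L_s,L_r$ satisfy the defining relations of $\mathcal H(W_{s,r;d})$ (see \cite{Deo87}); alternatively one verifies this directly, the only non-formal point being the boundary instance of the quadratic relation, which holds because $x^2-(q-1)x-q=(x-q)(x+1)=0$ for $x\in\{q,-1\}$. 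Now $I$ is an automorphism of the free module $\mathfrak M^H$, so conjugation by $I$ is an algebra automorphism of $\mathrm{End}(\mathfrak M^H)$; by the first part $\Gamma_{\bar s}=I\,L_s\,I^{-1}$ and $\Gamma_{\bar r}=I\,L_r\,I^{-1}$, hence $\Gamma_{\bar s},\Gamma_{\bar r}$ satisfy the quadratic relations and the braid relation of $\mathcal H(W_{s,r;d})$. Since $\{\bar s,\bar r\}=\{s,r\}$ and both the quadratic relations and the length-$d$ braid relation are unchanged under interchanging the two generators of $W_{s,r;d}$, it follows that $\Gamma_s,\Gamma_r$ satisfy them, i.e. $\mathfrak M^H$ is an $\mathcal H(W_{s,r;d})$–module with $T_s$ and $T_r$ acting as $\Gamma_s$ and $\Gamma_r$.

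The proof has no conceptual obstacle; the only real work is bookkeeping. The delicate points are that the behaviour of all four operators at the top element $w_{d-1}$ depends on the parity of $d$ (through $\bar s$ and $\bar r$), and that one must track exactly when a product $sw_p$ or $rw_p$ leaves $W^H_{s,r;d}$ (this happens precisely for the unique index producing a word of length $d$, i.e. $w_0(s,r)$). In a fully self-contained version the longest computation is the braid relation for $\Gamma$ (or for $L$), but each of its cases is elementary and reduces, as above, to the boundary identity $(x-q)(x+1)=0$ together with the obvious behaviour of $\Gamma_s,\Gamma_r$ on the chain $w_0\lhd w_1\lhd\cdots\lhd w_{d-1}$.
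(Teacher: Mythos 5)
Your proposal is correct and follows essentially the same route as the paper: a direct case-by-case verification of $I\circ L_s=\Gamma_{\bar s}\circ I$ (and its analogue for $r$) on the basis $\{m_{w_p}\}$, followed by transporting the Hecke module structure of $(L_s,L_r)$ — which is Deodhar's parabolic module, \cite[Corollary~2.3]{Deo87} — through the isomorphism $I$. Your explicit remark that the quadratic and length-$d$ braid relations are invariant under interchanging the two generators (needed because $\bar s=r$ when $d$ is odd) is a detail the paper leaves implicit, but it is the same argument.
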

\begin{proof}
Let us show that  $I \circ L_s (m_{w_p})=  \Gamma_{\bar s} \circ I(m_{w_p})$, for all $p\in [1,d-1]$. We have
 
 $I \circ L_s (m_{w_p})=  \left\{ \begin{array}{ll}
q\,m_{w_{d-p}}+(q-1)\,m_{w_{d-p-1}}, & \mbox{if $p\in [1,d-1], \;p \text{ odd}$,} \\
m_{w_{d-p-2}}, & \mbox{if $p\in [0,d-1), \;p \text{ even}$,} \\
x\,m_{w_{d-p-1}}, & \mbox{if $p=d-1,  \;p \text{ even}$.} 
\end{array} \right.$

If $d$ is even, then $\bar{s}=s$ and $d-p-1\not \equiv p \mod 2$. Thus
$$\Gamma_{\bar s} \circ I(m_{w_p})=\Gamma_{s} (m_{w_{d-p-1}})=  \left\{ \begin{array}{ll}
m_{w_{d-p-2}}, & \mbox{if $d-p-1\in [1,d-1], \;d-p-1 \text{ odd}$,} \\
q\,m_{w_{d-p}}+(q-1)\,m_{w_{d-p-1}}, & \mbox{if $d-p-1\in [0,d-1), \;d-p-1 \text{ even}$,} \\
x\,m_{w_{d-p-1}}, & \mbox{if $d-p-1=d-1,  \;d-p-1 \text{ even}$,} 
\end{array} \right.$$
namely
$$\Gamma_{\bar s} \circ I(m_{w_p})=  \left\{ \begin{array}{ll}
m_{w_{d-p-2}}, & \mbox{if $p\in [0,d-1), \;p \text{ even}$,} \\
q\,m_{w_{d-p}}+(q-1)\,m_{w_{d-p-1}}, & \mbox{if $p\in [1,d-1], \;p \text{ odd}$,} 
\end{array} \right.$$
since $d-1$ is odd.

If $d$ is odd, then $\bar{s}=r$ and $d-p-1 \equiv p \mod 2$. Thus
$$\Gamma_{\bar s} \circ I(m_{w_p})=\Gamma_{r} (m_{w_{d-p-1}})=  \left\{ \begin{array}{ll}
m_{w_{d-p-2}}, & \mbox{if $d-p-1\in (0,d-1], \;d-p-1 \text{ even}$,} \\
q\,m_{w_{d-p}}+(q-1)\,m_{w_{d-p-1}}, &  \mbox{if $d-p-1\in [1,d-1), \;d-p-1 \text{ odd}$,} \\
x\,m_{w_{d-p-1}}, & \mbox{if $d-p-1=0$ or $d-p-1=d-1 \text{ odd}$,} 
\end{array} \right.$$
namely
$$\Gamma_{\bar s} \circ I(m_{w_p})=  \left\{ \begin{array}{ll}
m_{w_{d-p-2}}, & \mbox{if $p\in [0,d-1), \;p \text{ even}$,} \\
q\,m_{w_{d-p}}+(q-1)\,m_{w_{d-p-1}}, &  \mbox{if $p\in [1,d-1], \;p \text{ odd}$,} \\
x\,m_{w_{d-p-1}}, & \mbox{if $p=d-1$,} 
\end{array} \right.$$
since $d-1$ is even.
 Therefore, the first diagram is commutative.
 
The argument to show that also the second diagram is commutative is entirely similar and left to the reader.

The last statement follows by the commutativity of the above diagrams, since $L_s$ and $L_r$ define a Hecke algebra action by Corollary~2.3 of \cite{Deo87} and $I$ is an isomorphism (note that there is a misprint in the definition of $L(s)$ at page 485 of \cite{Deo87}).
\end{proof}

Recall that, given a pircon $P$ and $w\in P$, we denote by $SPM_w$ the set of all special partial matchings of $w$. Notice that, if $M,N \in SPM_w$, then the orbit $\langle M,N \rangle (w)$ is dihedral by the definition of a special partial matching and Lemma~\ref{noorbita}. The following definition is a generalization of \cite[Definition~3.1]{BCM2}.

\begin{defi}
Let $P$ be a pircon,  $w \in P$, and  $M,N \in SPM_w$. We say that $M$ and $N$ are
\emph{strictly coherent} provided that  
\begin{itemize}
\item 
the rank of $\mathcal O$ divides the rank of $\langle M,N \rangle (w)$, if $\mathcal O$  is a dihedral interval,
\item 
the rank of $\mathcal O$ plus 1 divides the rank of $\langle M,N \rangle (w)$, if $\mathcal O$  is a chain,
\end{itemize}
for every orbit $\mathcal O$  of $\langle M,N \rangle $.
Moreover, if $S$ is a set of special partial matchings, then we say that $M$ and $N$ are \emph{$S$-coherent} provided that  $S$ has  a sequence $M_0,M_1, . . . , M_k$ of special partial
matchings of $w$ such that $M_0 = M$, $M_k = N$, and $M_i$ and $M_{i+1}$ are strictly coherent for all $i = 0,1, . . . , k-1$. For short, we write coherent instead of  $SPM_w$-coherent.
\end{defi}

Given a pircon  $P$, $w\in P$, and $M,N \in SPM_w$,  we denote by \( (W_{M,N},\{M,N\}) \)
the Coxeter system whose Coxeter generators are $M$ and $N$ subject to the relation given by $ m(M,N)=  \text{ rank }\langle M,N \rangle (w)= |\langle M,N \rangle (w)|/2$. 
For every orbit $\mathcal O$ of $\langle M,N \rangle$, we denote by 
 \( \mathfrak{M}^{\mathcal O} \) the free ${\mathbb Z}[q^{\frac{1}{2}},q^{-\frac{1}{2}}]$-module  with $\{m_u : u\in \mathcal O\}$ as a basis: $\mathfrak{M}^{\mathcal O}= \bigoplus _{u \in \mathcal O }{\mathbb Z}[q^{\frac {1}{2}},q^{-\frac {1}{2}}]m_u.$
\begin{thm}
\label{film}
Given a pircon $P$ and $w\in P$, let $M,N\in SPM_w$ and $\mathcal O$ be an orbit of $\langle M,N \rangle$. If $M$ and $N$ are strictly coherent, then $\mathfrak{M}^{\mathcal O}$ is a module over the Hecke algebra  \( {\mathcal H}(W_{M,N}) \) with 
\[T_M \cdot m_u= \left\{ \begin{array}{ll}
m_{M(u)}, & \mbox{if $M(u)  \lhd u$,} \\
q\,m_{M(u)}+(q-1)\,m_u, & \mbox{if $M(u) \rhd u$,} \\
x\,m_u, & \mbox{if $M(u) = u$,} 
\end{array} \right. \] 
and 
\[T_N \cdot m_u=  \left\{ \begin{array}{ll}
m_{N(u)}, & \mbox{if $N(u)  \lhd u$,} \\
q\,m_{N(u)}+(q-1)\,m_u, & \mbox{if $N(u) \rhd u$,} \\
x\,m_u, & \mbox{if $N(u) = u$,} 
\end{array} \right. \]
 for all $u\in  \mathcal O$.
\end{thm}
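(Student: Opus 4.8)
The plan is to verify that the prescribed action of $T_M$ and $T_N$ on $\mathfrak{M}^{\mathcal O}$ satisfies the defining relations of $\mathcal H(W_{M,N})$, namely the quadratic relations $T_M^2=(q-1)T_M+q$, $T_N^2=(q-1)T_N+q$, and the braid relation of length $m(M,N)=\operatorname{rank}\langle M,N\rangle(w)$. Rather than check these by brute force, I would use Lemma~\ref{noorbita}: the orbit $\mathcal O$ is either dihedral or chain-like, so there is a poset isomorphism $\varphi$ from $\mathcal O$ onto an interval in a dihedral Coxeter group. In fact I want to match $\mathcal O$ with (part of) the module $\mathfrak M^H$ studied in Theorem~\ref{diagrammicommutano}, where the dihedral group is $W_{\bar s,\bar r;d'}$ for an appropriate $d'$ (equal to the rank of a dihedral orbit, or the rank plus $1$ of a chain-like orbit), and $H=\{\bar r\}$. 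The key point is that, under $\varphi$, the action of $T_M$ and $T_N$ described in the statement is \emph{exactly} the action of the operators $\Gamma_s$, $\Gamma_r$ (for a dihedral orbit) respectively $L_s$, $\Gamma_r$ or the appropriate pairing (for a chain-like orbit) from Section~\ref{quattro}.

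First I would set up the isomorphism explicitly. Pick a maximal element $z$ of $\mathcal O$ (which exists and is the $\hat 1_{\mathcal O}$ by Lemma~\ref{tuttaorbita}, since $\mathcal O$ is an interval). Applying alternately $M$ and $N$ starting from $z$ produces, by the argument in the proof of Lemma~\ref{noorbita}, a listing $z=u_0\rhd u_1\rhd u_2\rhd\cdots$ of the elements of $\mathcal O$; set $u_i\mapsto m_{w_{d'-1-i}}$ in the notation of Section~\ref{quattro} when $\mathcal O$ is dihedral, and the analogous labelling when $\mathcal O$ is chain-like. The three cases in the definitions of $T_M\cdot m_u$ ($M(u)\lhd u$, $M(u)\rhd u$, $M(u)=u$) then correspond precisely to the three cases in the piecewise definitions of $\Gamma_s/\Gamma_r$ (or $L_s$) on $m_{w_p}$: ``$sw\lhd w$'', ``$w\lhd sw\in W^H$'', ``$w\lhd sw\notin W^H$''. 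Here the third (fixed-point) case can only occur at $\hat0_{\mathcal O}$ or $\hat1_{\mathcal O}$ of a chain-like orbit, matching exactly the endpoint cases $p=0$ or $p=d-1$ in the $L$/$\Gamma$ formulas, which is why one must use the mixed pair of operators for a chain and the $\Gamma$-pair for a dihedral orbit. Once the dictionary is in place, the quadratic relations and the braid relation for $T_M,T_N$ on $\mathfrak M^{\mathcal O}$ are inherited from the corresponding relations for the operators on $\mathfrak M^H$, which hold by Theorem~\ref{diagrammicommutano} (itself resting on \cite[Corollary~2.3]{Deo87}).

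The one place where strict coherence is essential — and the main obstacle to get right — is the braid relation: its length is $m(M,N)=\operatorname{rank}\langle M,N\rangle(w)$, which is generally \emph{larger} than the ``natural'' dihedral length $d'$ attached to the orbit $\mathcal O$ itself. So I must check that the braid relation of length $m(M,N)$ holds on $\mathfrak M^{\mathcal O}$, not merely the one of length $d'$. This is exactly where the divisibility hypotheses enter: since $d'$ divides $m(M,N)$ (dihedral case) or $d'=(\text{rank}+1)$ divides $m(M,N)$ (chain case), the braid word of length $m(M,N)$ is a power of the braid word of length $d'$, and the latter acts as the identity (or, more precisely, reduces to the shorter relation) on $\mathfrak M^{\mathcal O}$ because the $W_{\bar s,\bar r;d'}$-action already satisfies its own braid relation. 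I would spell this out by noting that $(\underbrace{\cdots T_MT_NT_M}_{d'})$ and $(\underbrace{\cdots T_NT_MT_N}_{d'})$ act identically on each $m_u\in\mathcal O$ (both send the top element down through the whole orbit and back, yielding the same scalar-weighted vector), and then raising to the power $m(M,N)/d'$ gives the length-$m(M,N)$ braid relation. With the quadratic relations checked elementwise (a short case analysis identical to the one verifying that $\Gamma_s^2=(q-1)\Gamma_s+q$), this completes the proof that $\mathfrak M^{\mathcal O}$ is an $\mathcal H(W_{M,N})$-module.
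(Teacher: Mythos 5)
Your overall strategy is the one the paper uses: model the orbit by a standard module for a dihedral Hecke algebra $\mathcal H(W_{s,r;d'})$ and exploit the divisibility coming from strict coherence to pull the action back along the morphism $\mathcal H(W_{M,N})\to\mathcal H(W_{s,r;d'})$. Your treatment of the chain-like case is essentially the paper's (which identifies $m_{\underbrace{\ldots MNM}_{p}(\hat 0_{\mathcal O})}$ with $m_{w_p}$ and pulls back the representation of Theorem~\ref{diagrammicommutano}). However, two of your identifications are wrong. First, in the chain-like case \emph{both} $T_M$ and $T_N$ act as $\Gamma$-operators ($\Gamma_s$ and $\Gamma_r$, with the normalization $N(\hat 0_{\mathcal O})=\hat 0_{\mathcal O}$ matching $H=\{r\}$); the $L$-operators and the intertwiner $I$ appear only inside the proof of Theorem~\ref{diagrammicommutano}, to transport Deodhar's result to the $\Gamma$'s. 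An ``$L_s$, $\Gamma_r$'' pairing does not reproduce the stated formulas: $L_s(m_{w_0})=m_{w_1}$, whereas the theorem requires $T_M\cdot m_{\hat 0_{\mathcal O}}=q\,m_{M(\hat 0_{\mathcal O})}+(q-1)\,m_{\hat 0_{\mathcal O}}$ since $M(\hat 0_{\mathcal O})\rhd\hat 0_{\mathcal O}$.

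Second, and more seriously, a dihedral orbit of rank $d'$ has $2d'$ elements and is not a chain, so it cannot be matched with (part of) $\mathfrak M^H$ for $H=\{\bar r\}$: the basis of that module is indexed by $W^H_{s,r;d'}$, a chain with $d'$ elements, and the trichotomy ``$sw\lhd w$ / $w\lhd sw\in W^H$ / $w\lhd sw\notin W^H$'' is a parabolic-quotient phenomenon that does not describe the action on a full dihedral interval. For the dihedral case one needs the regular-type module whose basis is indexed by the whole dihedral interval; this is precisely the case treated in \cite[Section 3]{BCM2}, which the paper cites rather than redoing. With these two corrections the rest of your argument goes through: the quadratic relations are inherited from the model, and the length-$m(M,N)$ braid relation follows from the length-$d'$ one by divisibility — though ``raising the braid word to the power $m(M,N)/d'$'' needs a word of care when $d'$ is odd (the two length-$d'$ words then end in different letters), which is the standard fact underlying the Hecke algebra morphism the paper invokes.
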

\begin{proof}
By Lemma~\ref{noorbita}, the orbit $\mathcal O$ can be either dihedral or chain-like.
If $\mathcal O$ is dihedral then we can proceed in a similar way as in \cite[Section 3]{BCM2}. So we assume that $\mathcal O$ is chain-like; let $d-1$ be its rank and suppose (without loss of generality) that $N(\hat{0}_{\mathcal O})= \hat{0}_{\mathcal O}$.

Recall that we denote by $W_{s,r;d}$ the dihedral Coxeter group with Coxeter generators $s$ and $r$ subject to the relation given by  $m(s,r)=d$. Let $H=\{r\}$ and $\mathfrak M^H$ be the free $\mathbb Z[q^{\frac {1}{2}},q^{-\frac {1}{2}}]$-module  with $\{m_w : w\in W^H_{s,r;d}\}$ as a basis. The two modules \( \mathfrak{M}^{\mathcal O} \) and  \( \mathfrak{M}^{H} \) are isomorphic and we identify them through the isomorphism sending $m_{\underbrace{\ldots M N M }_{p \text{ letters}}(\hat{0}_{\mathcal O})}$ to $m_{w_p}$.
Since $M$ and $N$ are strictly coherent, $d$ divides $m(M,N)$; so we have a morphism of Hecke algebras:
$$\begin{array}{ccc}
 {\mathcal H}(W_{M,N}) & \longrightarrow &  {\mathcal H}(W_{s,r;d}) \\
T_M & \mapsto & T_s\\
T_N & \mapsto & T_r
\end{array}$$
The pullback of the representation of Theorem~\ref{diagrammicommutano} is the desired representation.
\end{proof}

\section{Kazhdan--Lusztig $R^x$-polynomials for pircons}
\label{KLMMpoly}
In this section, we introduce and study the Kazhdan--Lusztig $R^x$-polynomials of a pircon $P$. The construction mimics that of \cite[Section~3]{BCM2}, but the proofs in this general setting are more complicated and use the results in Sections~\ref{orbits} and \ref{quattro}. Furthermore, for each $x\in\{q,-1\}$, there are possibly many different families of  Kazhdan--Lusztig $R^x$-polynomials attached to a pircon $P$. 

By the definition of a pircon,  we can fix a special partial matching of $v$,  for each $ v\in P\setminus \{\hat{0}_P\} $. If  $\mathcal M$ is the set of such fixed special partial matchings, then we call the pair $(P,\mathcal M)$ a \emph{refined pircon} and $\mathcal M$ a \emph{refinement} of $P$.

\begin{defi}
\label{klmm}
Let $x\in \{q,-1\}$.
Let $(P,\mathcal M)$ be a refined pircon, where $\mathcal M=  \{M_v \in SPM_v: v\in P\setminus \{\hat{0}\} \}$. The family of \emph{Kazhdan--Lusztig $R^{x}$-polynomials} $\{R^{x}_{u,w}(q)\}_{u,w\in P}\subseteq \mathbb Z[q]$ of $(P,\mathcal M)$ (or $R^{x}$-polynomials for short) is the unique family of polynomials satisfying  the following recursive property and initial conditions:
\begin{equation}
\label{klperpirconi}
 R_{u,w}^{x} (q)= \left\{ \begin{array}{ll}
R_{M_w(u),M_w(w)}^{x}(q), & \mbox{if $M_w(u)  \lhd u$,} \\
(q-1)R_{u,M_w(w)}^{x}(q)+qR_{M_w(u),M_w(w)}^{x}(q), & \mbox{if $M_w(u) \rhd u$,} \\
(q-1-x)R_{u,M_w(w)}^{x}(q), & \mbox{if $M_w(u) = u$,} 
\end{array} \right. 
\end{equation}
and 
$R_{w,w}^{x} (q)=1$ for all $w\in P$.
\end{defi}

Notice that 
\begin{equation}
\label{q-1-}
q-1-x=  \left\{ \begin{array}{rl}
-1, & \mbox{if $x=q$,} \\
q, & \mbox{if $x=-1$,} 
\end{array} \right. 
\end{equation} 
so  the $R^{q}$-polynomials are the $R$-polynomials in whose recursion $-1$ is appearing whereas the $R^{-1}$-polynomials are the $R$-polynomials in whose recursion $q$ is appearing. The (at first glance unnatural) choice follows the usual terminology for the parabolic Kazhdan--Lusztig $R$-polynomials (see Subsection~\ref{para}). The two families of parabolic Kazhdan--Lusztig $R^q$-polynomials  and $R^{-1}$-polynomials are associated with two modules, usually denoted $M^q$ and $M^{-1}$, respectively. For these two modules, the adopted notation is more natural than the opposite one. 

The two families of $R$-polynomials satisfy the following properties.
\begin{pro}
\label{relazione}
Let $(P,\mathcal M)$ be a refined pircon with rank function $\rho$. If $u,w\in P$, then 
\begin{enumerate}
\item $\deg R^{-1}_{u,w}(q)=\rho(w)-\rho(u)$,
\item $R^{q}_{u,w}(0)=(-1)^{\rho(w)-\rho(u)}$,
\item $R^{-1}_{u,w}(q)=(-q)^{\rho(w)-\rho(u)} \; R^{q}_{u,w}(q^{-1}).$
\end{enumerate}
\end{pro}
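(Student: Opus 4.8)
The three statements are proved simultaneously by induction on $\rho(w)$, with the key engine being the defining recursion \eqref{klperpirconi}. For the base case $u=w$ (hence $\rho(w)-\rho(u)=0$) all three assertions are immediate from $R^x_{w,w}(q)=1$: the degree is $0$, $R^q_{w,w}(0)=1=(-1)^0$, and $1=(-q)^0\cdot 1$. For the inductive step, fix $w\neq\hat0_P$ so that $M_w$ is defined, and fix $u<w$ (if $u\not\leq w$ both sides of everything are $0$ by the usual convention extending \eqref{klperpirconi}, so there is nothing to prove). Note $\rho(w)-\rho(M_w(w))=1$ in the first two branches and $\rho(w)-\rho(M_w(w))$ equals $\pm 1$ but one checks $M_w(w)\lhd w$ always since $M_w$ is a special partial matching of $P_{\leq w}$ with $M_w(\hat1)\lhd\hat1$; in all cases $\rho(M_w(w))=\rho(w)-1$, and $M_w(u)$ has rank $\rho(u)\pm1$ or $\rho(u)$ according to the branch. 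Thus every $R$-polynomial on the right-hand side of \eqref{klperpirconi} has strictly smaller ``top index rank'' and the inductive hypothesis applies to it.

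\textbf{Degree (statement 1).} In each of the three branches of \eqref{klperpirconi} with $x=-1$, so that $q-1-x=q$, I would compute the degree of the right-hand side using the inductive hypothesis $\deg R^{-1}_{a,b}=\rho(b)-\rho(a)$. In the branch $M_w(u)\lhd u$: $\deg R^{-1}_{M_w(u),M_w(w)}=(\rho(w)-1)-(\rho(u)-1)=\rho(w)-\rho(u)$. In the branch $M_w(u)\rhd u$: $\deg\bigl((q-1)R^{-1}_{u,M_w(w)}\bigr)=1+(\rho(w)-1-\rho(u))=\rho(w)-\rho(u)$, while $\deg\bigl(qR^{-1}_{M_w(u),M_w(w)}\bigr)=1+(\rho(w)-1-(\rho(u)+1))=\rho(w)-\rho(u)-1$, so the first term dominates and the degree is $\rho(w)-\rho(u)$ — here one must also check the leading coefficients do not cancel, but the leading coefficient of $R^{-1}_{u,M_w(w)}$ is (by a parallel, easy induction, or as part of the same induction) positive/a fixed sign, so no cancellation. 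In the branch $M_w(u)=u$: $\deg\bigl(q\,R^{-1}_{u,M_w(w)}\bigr)=1+(\rho(w)-1-\rho(u))=\rho(w)-\rho(u)$. So statement 1 holds.

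\textbf{Constant term (statement 2) and duality (statement 3).} For statement 2, set $x=q$ and evaluate \eqref{klperpirconi} at $q=0$, using $q-1-x=-1$: the first branch gives $R^q_{M_w(u),M_w(w)}(0)=(-1)^{(\rho(w)-1)-(\rho(u)-1)}=(-1)^{\rho(w)-\rho(u)}$; the second branch gives $(0-1)R^q_{u,M_w(w)}(0)+0=-(-1)^{\rho(w)-1-\rho(u)}=(-1)^{\rho(w)-\rho(u)}$; the third branch gives $(-1)R^q_{u,M_w(w)}(0)=-(-1)^{\rho(w)-1-\rho(u)}=(-1)^{\rho(w)-\rho(u)}$. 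For statement 3, I would verify that the polynomial $\widetilde R_{u,w}(q):=(-q)^{\rho(w)-\rho(u)}R^q_{u,w}(q^{-1})$ satisfies exactly the recursion \eqref{klperpirconi} defining $R^{-1}_{u,w}(q)$ and the same initial condition $\widetilde R_{w,w}=1$; uniqueness of the solution of that recursion then forces $\widetilde R_{u,w}=R^{-1}_{u,w}$. Concretely, substitute $q\mapsto q^{-1}$ in the $R^q$-recursion, multiply through by the appropriate power of $-q$, and match branches: the first branch is immediate; in the second branch $(q^{-1}-1)R^q_{u,M_w(w)}(q^{-1})+q^{-1}R^q_{M_w(u),M_w(w)}(q^{-1})$, after multiplying by $(-q)^{\rho(w)-\rho(u)}$ and regrouping, becomes $(q-1)\widetilde R_{u,M_w(w)}(q)+q\widetilde R_{M_w(u),M_w(w)}(q)$ (one uses that the exponent shifts by $1$ in one summand and by $-1$ in the other, together with the identity $q^{-1}-1=-q^{-1}(1-q)$ and $(-q)\cdot q^{-1}=-1$, etc.); in the third branch $(q^{-1}-1-q)R^q_{u,M_w(w)}(q^{-1})$ — wait, here one must be careful: with $x=q$ the factor is $q-1-x$ evaluated where? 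The factor in the $R^q$-recursion is the constant ``$-1$'' (by \eqref{q-1-}), so at $q\mapsto q^{-1}$ it is still $-1$, and $(-q)^{\rho(w)-\rho(u)}\cdot(-1)\cdot R^q_{u,M_w(w)}(q^{-1}) = (-q)\cdot(-1)\cdot\widetilde R_{u,M_w(w)}(q)/(-q)^{0}$... more precisely it equals $q\cdot\widetilde R_{u,M_w(w)}(q)$, which is exactly the third branch of the $R^{-1}$-recursion since there $q-1-x=q$. So statement 3 follows from uniqueness.

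\textbf{Main obstacle.} The genuinely delicate point is statement 3: one must track the exponents of $(-q)$ carefully across the three branches, because $M_w(u)$ sits at rank $\rho(u)-1$, $\rho(u)+1$, or $\rho(u)$ depending on the branch, so the power $\rho(w)-\rho(u)$ multiplying $\widetilde R_{u,w}$ does \emph{not} match the power $\rho(M_w(w))-\rho(M_w(u))$ multiplying the recursively-appearing terms, and the bookkeeping of these off-by-one discrepancies is exactly what makes the two recursions line up. Also, in the degree argument for statement 1, rigor requires knowing the leading coefficient of $R^{-1}_{u,w}$ is nonzero with a predictable sign so that no cancellation occurs in the second branch; the cleanest fix is to strengthen the induction hypothesis to ``$R^{-1}_{u,w}$ has degree exactly $\rho(w)-\rho(u)$ and leading coefficient $1$'', which propagates through \eqref{klperpirconi} without difficulty. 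Everything else is a routine branch-by-branch check once the inductive framework and these strengthenings are in place.
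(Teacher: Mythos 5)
Your proposal is correct and follows essentially the same route as the paper: parts (1) and (2) by direct induction on the defining recursion, and part (3) by verifying that $(-q)^{\rho(w)-\rho(u)}R^q_{u,w}(q^{-1})$ satisfies the recursion and initial conditions characterizing $R^{-1}_{u,w}$ and invoking uniqueness (the paper states exactly this strategy and leaves the branch-by-branch computation, which you carry out correctly, to the reader). The only superfluous worry is the cancellation check in the second branch of part (1): the two summands there have different degrees, so no cancellation can occur once the inductive hypothesis gives the exact degree of $R^{-1}_{u,M_w(w)}$.
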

\begin{proof}
The first two statements are straighforward by Definition~\ref{klmm}. For the third, we need to show that the polynomials $(-q)^{\rho(y)-\rho(x)} \; R^{q}_{x,y}(q^{-1})$, for $x,y\in P$, satisfy the recursive property and the initial conditions in Definition~\ref{klmm}. This easy computation is left to the reader.
\end{proof}

Let $(P,\mathcal M )$ be a refined pircon and $w\in P$. Mimicking \cite[Subsection~4.1]{Mtrans}, we say that a special partial matching $M$ of $w$ \emph{calculates} the Kazhdan--Lusztig $R^{x}$-polynomials of $(P,\mathcal M)$ (or  is \emph{calculating}, for short) provided that, for all $u \in P$, $u \leq w$, the following holds:
\begin{equation*}
 R_{u,w}^{x} (q)= \left\{ \begin{array}{ll}
R_{M(u),M(w)}^{x}(q), & \mbox{if $M(u)  \lhd u$,} \\
(q-1)R_{u,M(w)}^{x}(q)+qR_{M(u),M(w)}^{x}(q), & \mbox{if $M(u) \rhd u$,} \\
(q-1-x)R_{u,M(w)}^{x}(q), & \mbox{if $M(u) = u$.} 
\end{array} \right. 
\end{equation*}
Thus the matchings of $\mathcal M$ are calculating by definition. 

\begin{rem}
In general, the Kazhdan--Lusztig $R^{x}$-polynomials of a refined pircon $(P,\mathcal M)$  depend on the refinement $\mathcal M$. For example, let $P$ be the pircon in Figure~\ref{nondircone}, and let $\mathcal M$ and $\mathcal M'$ be any two refinements of  $P$  such that  $\mathcal M$ contains the dashed special partial matching while  $\mathcal M'$ contains the solid special partial matching.
\begin{figure}[h]
\begin{center}
$$
\begin{tikzpicture} 
 \draw[fill=black]{(4,1) circle(3pt)};
 \node[left] at (4,1){$v$};
 \draw[fill=black]{(6,1) circle(3pt)};
\draw[fill=black]{(5,2) circle(3pt)};
\draw[fill=black]{(5,3) circle(3pt)};
\draw[fill=black]{(5,4) circle(3pt)};
\node[above] at (5,4){$\hat{1}_P$};
\draw[fill=black]{(5,0) circle(3pt)};

 \draw[line width=3pt, line width=3pt]{(5,0)--(4,1)};
  \draw[line width=3pt, line width=3pt]{(5,2)--(6,1)}; 
 \draw{(5,2)--(5,3)}; 
\draw[dashed, line width=3pt]{(5,2)--(4,1)};
\draw{(5,2)--(4,1)};
\draw[dashed, line width=3pt]{(5,0)--(6,1)};
\draw{(5,0)--(6,1)};
\draw[dashed, line width=3pt]{(4.9,3)--(4.9,4)};
 \draw[line width=3pt, line width=3pt]{(5.1,3)--(5.1,4)}; 
\end{tikzpicture}$$
\end{center}
\caption{\label{nondircone} A pircon and two of its special partial matchings}
\end{figure}
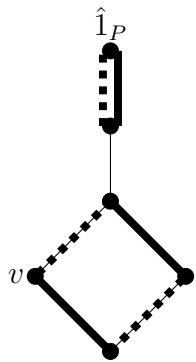
No matter what the other special partial matchings in $\mathcal M$ and $\mathcal M'$ are, the Kazhdan--Lusztig $R^{x}$-polynomial $R^{x}_{v,\hat{1}_P}$ of $(P,\mathcal M)$ is $(q-1)(q-1-x)^2$ and the Kazhdan--Lusztig $R^{x}$-polynomial $R^{x}_{v,\hat{1}_P}$ of $(P,\mathcal M')$ is $(q-1)^2 (q-1-x)$. Therefore, the dashed matching does not calculate the Kazhdan--Lusztig $R^{x}$-polynomials of $(P,\mathcal M)$ and the solid matching does not calculate the Kazhdan--Lusztig $R^{x}$-polynomials of $(P,\mathcal M')$. Notice that the dashed matching and the solid matching are not coherent.
\end{rem}

\begin{defi}
Let $(P,\mathcal M )$ be a refined pircon and $w\in P$. We say that a special partial matching $M$ of $w$  is \emph{strongly calculating} provided that the restriction of $M$ to $P_{\leq z}$ is calculating for all $z\in P$ such that $z\leq w$ and $M(z)\lhd z$.
\end{defi}
Notice that, by Lemma~\ref{restringe}, the restriction of $M$ to $P_{\leq z}$ is indeed a special partial matching for all $z\in P$ such that $M(z)\lhd z$.
\begin{thm}
\label{secommutano}
Let $(P, \mathcal M)$ be a refined pircon, $w \in P$, and $M$ be a special partial matching  of $w$. Suppose that
\begin{itemize}
\item the restriction of $M$ to $P_{\leq z}$ is calculating, for all $z\in P$ such that $z< w$ and $M(z)\lhd z$, and
\item there exists a strongly calculating special partial matching $N$ of $w$ 
that is strictly coherent with $M$. 
\end{itemize}
Then $M$  is strongly calculating.
\end{thm}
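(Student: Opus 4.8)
The plan is to reduce the statement to the single new assertion that $M$ \emph{calculates for $w$} itself: since a special partial matching of $w$ satisfies $M(w)\lhd w$, being strongly calculating amounts to the calculating property of $M|_{P_{\le z}}$ for every $z<w$ with $M(z)\lhd z$ --- which is hypothesis (i) --- together with the case $z=w$. Applying the definition of strongly calculating to $N$ with $z=w$ shows that $N$ calculates for $w$, so the family $\{R^x_{u,w}\}_{u\le w}$ satisfies the $N$-recursion of Definition~\ref{klmm}, and it remains to check that it satisfies the $M$-recursion as well. I would phrase ``calculating'' inside the module $\mathfrak M_{\le w}:=\bigoplus_{u\le w}\mathbb Z[q^{\frac12},q^{-\frac12}]\,m_u$: writing $r_z:=\sum_{u\le z}R^x_{u,z}(q)\,m_u$, one checks that for a calculating matching $L$ of some $z\le w$ the recursion of Definition~\ref{klmm} is exactly the identity $r_z=\mathcal R_L(r_{L(z)})$, where $\mathcal R_L$ is the endomorphism of $\mathfrak M_{\le w}$ built (in a suitable normalization) from the Hecke operator $T_L$ of Theorem~\ref{film}. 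Here $\mathfrak M_{\le w}$ is made into an $\mathcal H(W_{M,N})$-module by decomposing $P_{\le w}$ into $\langle M,N\rangle$-orbits and applying Theorem~\ref{film} to each, which is legitimate precisely because $M$ and $N$ are strictly coherent; consequently $\mathcal R_M$ and $\mathcal R_N$ satisfy the braid relation $\underbrace{\mathcal R_M\mathcal R_N\cdots}_{d}=\underbrace{\mathcal R_N\mathcal R_M\cdots}_{d}$ in that module, where $d=m(M,N)$.

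Granting this, I would argue along the orbit of $w$. If $d=1$ then strict coherence forces every $\langle M,N\rangle$-orbit to be a rank-one dihedral orbit or a singleton, so $M=N$ on $P_{\le w}$ and there is nothing left to prove; assume $d\ge 2$. By Lemma~\ref{noorbita} the orbit $\mathcal O^\ast:=\langle M,N\rangle(w)$ is dihedral of rank $d$, with top $w$ and a minimum $\hat 0_{\mathcal O^\ast}$, and one descends from $w$ to $\hat 0_{\mathcal O^\ast}$ inside $\mathcal O^\ast$ in $d$ covering steps $w\rhd y_1\rhd\cdots\rhd y_d=\hat 0_{\mathcal O^\ast}$ with $y_1=N(w)$, the two matchings alternating thereafter. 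The first step is valid because $N$ calculates for $w$; a later step via $N$ is valid because $N$ is strongly calculating and $N(y_i)\lhd y_i$, $y_i\le w$, while a later step via $M$ is valid because $y_i<w$ and $M(y_i)\lhd y_i$, so hypothesis (i) applies; and at each step the relevant matching restricts to a matching of $P_{\le y_i}$ by Lemma~\ref{restringe}, so the restricted and global $\mathcal R$-operators agree on $\mathfrak M_{\le y_i}$. Chaining the resulting identities gives $r_w=\underbrace{\mathcal R_N\mathcal R_M\cdots}_{d}(r_{\hat 0_{\mathcal O^\ast}})$. Next, $w':=M(w)$ sits at rank $d-1$ in $\mathcal O^\ast$ and $N(w')\lhd w'$ with $w'<w$, so $N$ calculates for $w'$; descending from $w'$ to $\hat 0_{\mathcal O^\ast}$ in $d-1$ steps --- starting again with $N$, since $M(w')=w\rhd w'$ excludes $M$ at this stage --- gives $r_{w'}=\underbrace{\mathcal R_N\mathcal R_M\cdots}_{d-1}(r_{\hat 0_{\mathcal O^\ast}})$, hence $\mathcal R_M(r_{w'})=\underbrace{\mathcal R_M\mathcal R_N\cdots}_{d}(r_{\hat 0_{\mathcal O^\ast}})$. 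The braid relation identifies these two words of length $d$, so $r_w=\mathcal R_M(r_{w'})$; that is, $M$ calculates for $w$, and with hypothesis (i) this makes $M$ strongly calculating.

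I expect the main obstacle to be the translation asserted in the first paragraph: that ``$L$ calculates for $z$'' is precisely the module identity $r_z=\mathcal R_L(r_{L(z)})$ with $\mathcal R_L$ depending functorially on $T_L$, so that $\mathcal R_M$ and $\mathcal R_N$ inherit the braid relation from $T_M,T_N$. For dihedral orbits this is essentially the computation of \cite[Section~3]{BCM2}; what is new is the fixed-point case $M(u)=u$ of the recursion, with its coefficient $q-1-x$. Fixed points occur on chain-like orbits --- a phenomenon present in pircons but absent in zircons (compare Theorem~\ref{intervalli in pirconi}) --- and reconciling the ``$x\cdot m_u$'' contribution coming from Theorem~\ref{film} with the ``$(q-1-x)\cdot m_u$'' contribution demanded by Definition~\ref{klmm} is exactly the role of the parabolic-type modules $\mathfrak M^H$ and the commuting squares of Theorem~\ref{diagrammicommutano}: the correct $\mathcal R_L$ is normalized so that the module parameter is $q-1-x$ rather than $x$. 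A secondary, routine, point is that $P_{\le w'}$ and the downsets $P_{\le y_i}$ are not unions of $\langle M,N\rangle$-orbits; one handles this by intersecting each orbit with them and observing, via the lifting property (Lemma~\ref{lifting}), that such an intersection is an order ideal of the (dihedral or chain) poset in question, on which the global and restricted operators coincide.
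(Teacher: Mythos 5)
Your argument is correct and is essentially the paper's own proof in transposed form: where you push the vectors $r_z=\sum_{u\le z}R^x_{u,z}(q)\,m_u$ down the orbit of $w$ via operators $\mathcal R_M,\mathcal R_N$, the paper pushes the dual functionals $f\mapsto f^z=\sum_v f_v(q)R^x_{v,z}(q)$ via the $T_M,T_N$ of Theorem~\ref{film} (with the same $q-1-x$ normalization you identify), and both conclude by the alternating descent to $\hat 0_{\langle M,N\rangle(w)}$ together with the braid relation in $\mathcal H(W_{M,N})$. The points you flag as potential obstacles (orbit-wise module structure from strict coherence, and agreement of global and restricted operators on order ideals via the lifting property) are exactly the ones the paper handles, and you handle them the same way.
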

\begin{proof}
By hypothesis, the restriction of $M$ to $P_{\leq z}$ is calculating, for all $z\in P$ such that $z< w$ and $M(z)\lhd z$. Hence, we only need to show that $M$ is calculating: if we fix $u\in P$, with $u\leq w$, then  we need to prove 
\begin{equation}
\label{44}
 R_{u,w}^{x} (q)= \left\{ \begin{array}{ll}
R_{M(u),M(w)}^{x}(q), & \mbox{if $M(u)  \lhd u$,} \\
(q-1)R_{u,M(w)}^{x}(q)+qR_{M(u),M(w)}^{x}(q), & \mbox{if $M(u) \rhd u$,} \\
(q-1-x)R_{u,M(w)}^{x}(q), & \mbox{if $M(u) = u$.} 
\end{array} \right. 
\end{equation}

Let $m$ be the rank of $ \langle M,N \rangle (w)$ and $\mathcal O$ be the orbit $\langle M,N \rangle (u)$. Recall from Section~\ref{quattro} that  \( (W_{M,N},\{M,N\}) \) denotes the Coxeter system whose Coxeter generators are $M$ and $N$ subject to the relation given by \( m(M,N)= m$, and that  $\mathfrak{M}^{\mathcal O}$ denotes the free ${\mathbb Z}[q^{\frac{1}{2}},q^{-\frac{1}{2}}]$-module  with $\{m_v : v\in \mathcal O\}$ as a basis.

By Theorem \ref{film}, $\mathfrak{M}^{\mathcal O}$ is a  \( \mathcal H(W_{M,N}) \)-module with $T_M$ and $T_N$ acting as follows:
\[T_M \cdot m_v= \left\{ \begin{array}{ll}
m_{M(v)}, & \mbox{if $M(v)  \lhd v$,} \\
qm_{M(v)}+(q-1)m_v, & \mbox{if $M(v) \rhd v$,} \\
(q-1-x)m_v, & \mbox{if $M(v) = v$.} 
\end{array} \right. \] 
and 
\[T_N \cdot m_v=  \left\{ \begin{array}{ll}
m_{N(v)}, & \mbox{if $N(v)  \lhd v$,} \\
qm_{N(v)}+(q-1)m_v, & \mbox{if $N(v) \rhd v$,} \\
(q-1-x)m_v, & \mbox{if $N(v) = v$,} 
\end{array} \right. \]
 for all $v\in  \mathcal O$ (recall \eqref{q-1-}).

Given $f\in \mathfrak{M}^{\mathcal O}$ and $z\in P$, we denote by $f^z$   the polynomial
\( \sum_{v\in \mathcal O} f_v(q)R^x_{v,z}(q) \in \mathbb Z[q]\), provided that $f=\sum_{v\in \mathcal O} f_v(q)\,m_v$. 
In this notation, (\ref{44}) can be reformulated as
$R_{u,w}^{x} (q)= (T_M \cdot m_u)^{M(w)}$. 
Since $N$ is calculating, 
$R_{u,w}^{x} (q)= (T_N \cdot m_u)^{N(w)}$, and hence we are done if we prove 
\begin{equation*}
(T_M \cdot m_u)^{M(w)}=(T_N \cdot m_u)^{N(w)}.
\end{equation*}
For all $z<w$ such that $M(z)\lhd z$, the matching $M$ restricts to a special partial matching of $z$,  which is calculating  (by hypothesis). If $f=\sum_{v\in \mathcal O} f_v(q)\,m_v \in \mathfrak{M}^{\mathcal O}$, then
\begin{eqnarray*}
f^z &=& \sum_{v\in \mathcal O} f_v(q)R^x_{v,z}(q)\\
&=&\sum_{\{v:M(v)\lhd v\}}f_v(q)R^x_{M(v),M(z)}(q)+\sum_{\{v: v\lhd M(v)\}}f_v(q)\Big(qR^x_{M(v),M(z)}(q)+(q-1)R^x_{v,M(z)}(q) \Big) \\
&+&\sum_{\{v: v=M(v)\}}f_v(q) (q-1-x)R^x_{v,M(z)}(q)
\end{eqnarray*}
and
\begin{eqnarray*}
(T_M \cdot f)^{M(z)}&=&\Big( \sum_{v\in \mathcal O} f_v(q) T_M \cdot m_v \Big)^{M(z)}\\
&=&\Big(\sum_{\{v:M(v)\lhd v\}}  f_v(q) \, m_{M(v)}+ \sum_{\{v: v\lhd M(v)\}}f_v(q) \Big(q\,{m_{M(v)}}+(q-1)\,m_v \Big) \\
&+ &  \sum_{\{v: v= M(v)\}}f_v(q) (q-1-x)\,m_v  \Big)^{M(z)}\\
&=&\sum_{\{v:M(v)\lhd v\}}f_v(q)R^x_{M(v),M(z)}(q)+\sum_{\{v: v\lhd M(v)\}}f_v(q)\Big(qR^x_{M(v),M(z)}(q)+(q-1)R^x_{v,M(z)}(q) \Big) \\
&+&\sum_{\{v: v= M(v)\}}f_v(q) (q-1-x)R^x_{v,M(z)}(q).
\end{eqnarray*}

Hence
$$f^{z}=(T_M \cdot f)^{M(z)},$$
for all $f\in \mathfrak{M}^{\mathcal O}$ and all $z<w$ such that $M(z)\lhd z$.
Since $N$ is strongly calculating, an analogous computation yields
$$f^{z}=(T_N \cdot f)^{N(z)},$$
for all $f\in \mathfrak{M}^{\mathcal O}$ and all $z<w$ such that $N(z)\lhd z$. An alternated use of these two formulae implies
$$ (T_M \cdot m_u)^{M(w)}=(T_NT_M \cdot m_u)^{NM(w)} 
= \cdots= (\underbrace{\cdots T_MT_NT_M}_{m} \cdot m_u)^{\scriptsize {\underbrace{\cdots MNM}_{m}}(w)},
$$
and 
\[
(T_N \cdot m_u)^{N(w)}=(T_MT_N \cdot m_u)^{MN(w)} 
= \cdots=(\underbrace{\cdots T_NT_MT_N}_{m} \cdot m_u)^{\scriptsize {\underbrace{\cdots NMN}_{m}}(w)}.
\]
Since $\underbrace{\cdots T_MT_NT_M}_{m}= \underbrace{\cdots T_NT_MT_N}_{m}$ and \( \underbrace{\cdots MNM}_{m}(w)=\underbrace{\cdots NMN}_{m}(w) \), the desired equality
\begin{equation*}
(T_M \cdot m_u)^{M(w)}=(T_N \cdot m_u)^{N(w)}.
\end{equation*}
holds.
\end{proof}  

\begin{defi}
\label{sistema}
We say that $(P,S)$ is a \emph{pircon system} provided that
\begin{enumerate}
\item $P$ is a pircon,
\item $S\subseteq \bigcup_{w\in P\setminus \{\hat{0}_P\}}SPM_w$
\item
\label{tre}  for all $w\in P\setminus \{\hat{0}_P\}$, there exists $M\in S$ such that $M(w)$ is defined and $M(w)\lhd w$,
\item for all $w\in P\setminus \{\hat{0}_P\}$ and all $M,N\in S$ such that $M(w)$ and $N(w)$ are defined and satisfy $M(w)\lhd w$ and $N(w)\lhd w$, the restrictions of $M$ and $N$ to $P_{\leq w}$ are $S$-coherent.
\end{enumerate}
\end{defi}

Theorem~\ref{secommutano} implies the following result.
\begin{cor}
\label{unici}
Let $(P,S)$ be a pircon system and $x\in\{q,-1\}$. All refinements $\mathcal M$ of $P$, with $\mathcal M \subseteq S$, yields the same family of Kazhdan--Lusztig $R^x$-polynomials (for which, all matchings in $S$ are strongly calculating).
\end{cor}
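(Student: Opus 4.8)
The plan is to show, by induction on $\rho(w)$, that every matching $M \in S$ with $M(w) \lhd w$ is strongly calculating with respect to any refinement $\mathcal{M} \subseteq S$; the statement then follows since the recursion defining $R^x$ is forced once a calculating matching of each $w$ is known. First I would fix $x \in \{q,-1\}$ and a refinement $\mathcal{M}\subseteq S$. For the base cases ($\rho(w)\le 1$) there is nothing to prove. For the inductive step, fix $w$ and suppose the claim holds for all elements of strictly smaller rank. By the inductive hypothesis, for every $z < w$ with $M(z)\lhd z$, $M$ restricted to $P_{\le z}$ is calculating; this is exactly the first hypothesis of Theorem~\ref{secommutano}.

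The key point is to produce the second hypothesis of Theorem~\ref{secommutano}: a strongly calculating special partial matching $N$ of $w$ that is strictly coherent with $M$. Here I would first observe that the matching $M_w \in \mathcal{M} \subseteq S$ from the refinement is calculating of $w$ by definition, and — using the inductive hypothesis applied to elements below $w$, since $M_w(z)\lhd z$ forces $z$ to have smaller rank — is in fact strongly calculating of $w$. Now, by clause~(\ref{tre}) of Definition~\ref{sistema}, $M_w(w)\lhd w$; and $M(w)\lhd w$ by assumption; so clause~(4) of Definition~\ref{sistema} gives that $M$ and $M_w$ (restricted to $P_{\le w}$) are $S$-coherent, i.e.\ there is a chain $M = M_0, M_1, \dots, M_k = M_w$ in $S$ (all special partial matchings of $w$) with consecutive terms strictly coherent. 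The subtlety is that an intermediate $M_i$ need not satisfy $M_i(w)\lhd w$, so I cannot directly invoke the inductive hypothesis to conclude $M_i$ is strongly calculating; instead I would run an inner induction along this chain from $M_k = M_w$ backwards: assuming $M_{i+1}$ is strongly calculating of $w$, I apply Theorem~\ref{secommutano} to $M_i$ — its first hypothesis holds because for $z<w$ with $M_i(z)\lhd z$ the matching $M_i\!\restriction P_{\le z}$ is calculating by the outer inductive hypothesis (as $\rho(z)<\rho(w)$ and $M_i\!\restriction P_{\le z}$ lies in the relevant $SPM_z$, using that $S$-coherence is inherited by restrictions), and its second hypothesis holds with $N = M_{i+1}$ — to conclude $M_i$ is strongly calculating. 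Descending to $i=0$ gives that $M = M_0$ is strongly calculating of $w$.

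I expect the main obstacle to be the bookkeeping in the preceding paragraph: carefully checking that $S$-coherence of the restrictions to $P_{\le w}$ passes to the restrictions to $P_{\le z}$ for $z<w$ (so that the outer inductive hypothesis can be fed into the first bullet of Theorem~\ref{secommutano} for each intermediate matching $M_i$), and that the intermediate matchings $M_i$ genuinely restrict to special partial matchings of the relevant lower ideals. This is where Lemma~\ref{restringe} and the definition of strict coherence (which is purely about orbits of $\langle M_i, M_{i+1}\rangle$ acting on $P_{\le w}$, hence restricts cleanly) do the work. Once all matchings in $S$ are shown to be strongly calculating, the equalities in the definition of ``calculating'' show that, starting from $R^x_{w,w}=1$ and peeling off via any calculating matching of each $w$, the polynomials $R^x_{u,w}$ are determined independently of which refinement $\mathcal{M}\subseteq S$ was chosen, which is the assertion of Corollary~\ref{unici}; the parenthetical ``for which all matchings in $S$ are strongly calculating'' is precisely the induction we have carried out.
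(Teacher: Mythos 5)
Your proposal is correct and follows essentially the same route as the paper: an outer induction on $\rho(w)$, an application of Theorem~\ref{secommutano} with the refinement's matching $M_w$ serving as the strongly calculating partner, and a walk along the $S$-coherence chain (your inner induction is exactly what the paper compresses into ``conclude by transitivity''). The one piece of bookkeeping you flag at the end --- that the restrictions $M_i\!\restriction P_{\leq z}$ must be eligible for the outer inductive hypothesis --- is handled in the paper by replacing $S$ at the outset with its closure $\overline{S}$ under such restrictions and noting that $(P,\overline{S})$ is still a pircon system, which is the clean way to discharge the concern you raise.
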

\begin{proof}
First note that, if $(P,S)$ is a pircon system, then also $(P,\overline{S})$ is a pircon system, where $\overline{S}$ is the set of special partial matchings obtained from $S$ by adding all restrictions of $M$ to $P_{\leq z}$, for all $M\in S$ and $z\in P$ such that $z\leq w$ and $M(z)\lhd z$. Thus, we may suppose that $S$ is closed under taking such restrictions, i.e. $S=\overline{S}$.

Choose  an arbitrary refinement $\mathcal M$ of $P$, with $\mathcal M \subseteq S$ (whose existence is assured  by \eqref{tre} of Definition~\ref{sistema}). We need to show that all special partial matchings of $w$ belonging to $S$ are calculating, for all $w\in P$. We use induction on $\rho(w)$, the case  $\rho(w)=1$ being clear (as before, $\rho$ denotes the rank function of $P$).

Suppose $\rho(w)>1$. Let $M\in S$ be a special partial matching of $w$ and denote by $N$ the unique special partial matching of $w$ belonging to $\mathcal M$. If $M=N$, then the assertion is clear since $N$ is calculating by definition. Otherwise, $M$ and $N$ are $S$-coherent.

Suppose first that $M$ and $N$ are strictly coherent. By the induction hypothesis and the fact that  $S=\overline{S}$ holds, the restriction of $M$ to $P_{\leq z}$ is calculating, for all $z\in P$ such that $z< w$ and $M(z)\lhd z$, and, moreover, $N$ is strongly calculating. Thus, we can apply Theorem~\ref{secommutano}, which implies that $M$ is calculating. 

If $M$ and $N$ are not strictly coherent, we can conclude by transitivity, since $M$ and $N$ are $S$-coherent.
\end{proof}

\begin{defi}
\label{dircone}
A pircon $D$ is a \emph{dircon} provided that any two special partial matchings $M,N \in SPM_w$ are coherent,  for all $w \in D$. 
\end{defi}
In other words, a pircon $D$ is a dircon if and only if $(D, \bigcup_{w\in P\setminus \{\hat{0}_P\}}SPM_w)$ is a  pircon system. By Corollary~\ref{unici}, for both $x=q$ and $x=-1$, a dircon has a unique family of Kazhdan--Lusztig $R^x$-polynomials.

The terminology comes from the fact that dircons relate to pircons in the same way as diamonds relate to zircons (see \cite[Definition~3.2]{BCM2}).

\section{Applications}
In this section, we show that examples of Kazhdan--Lusztig $R^x$-polynomials of pircons include Kazhdan--Lusztig--Vogan $R$ and $Q$-polynomials for fixed point free involutions and parabolic Kazhdan--Lusztig polynomials of Coxeter groups.

\subsection{Kazhdan--Lusztig--Vogan $R$-polynomials for fixed point free involutions}
The orbits of the action of  $Sp(2n,C)$  on the flag variety of $SL(2n,C)$ are parametrized by the set of fixed point free involutions in the symmetric group $S_{2n}$, or, equivalently, by the set 
$\iota= \{\theta (w^{-1} ) w : w\in S_{2n}\}$ of \emph{twisted identities}, where $\theta$ is the involutive automorphism of $S_{2n}$ sending $s_i$ to $s_{2n-i}$, for all $i\in [1,n]$ (here $s_k$ denotes the simple transposition $(k,k+1)$, for all $k\in[1,2n-1]$). Note that $\theta (w )=w_0 w w_0 $ for all $w\in S_{2n}$, where $w_0$ is the longest permutation, i.e. the reverse permutation sending $i$ to $2n-i+1$, for all $i\in[1,2n]$. The set $\iota$ of twisted identities is endowed with a poset structure induced by the Bruhat order (all covering relations in this subsection refer to the induced poset structure).  In this setting, the associated Kazhdan--Lusztig--Vogan $R$-polynomials are indexed by pairs of elements in $\iota$ and are uniquely determined by the following recursive formula and initial values (see \cite[Proposition 5.1]{H}).
\begin{pro}
\label{klv}
The family of  Kazhdan--Lusztig--Vogan $R$-polynomials $\{R_{x,y}\}_{x,y\in \iota}$ for fixed point free involutions satisfies the following properties:
\begin{itemize}
\item $R_{e,e}=1$,
\item $R_{x,y}=0$ if $x\not\leq y$,
\item if $s\in D_R(w)$ then 
\begin{equation}
\label{klv2}
 R_{u,w} (q)= \left\{ \begin{array}{ll}
R_{u\ast s,w\ast s}(q), & \mbox{if $u\ast s  \lhd u$,} \\
(q-1)R_{u,w\ast s}(q)+qR_{u\ast s, w\ast s}(q), & \mbox{if $u\ast s \rhd u$,} \\
-R_{u,w\ast s}(q), & \mbox{if $u\ast s =u$,} 
\end{array} \right. 
\end{equation}
where $x\ast s= \theta (s) x s$ for all $x\in \iota$. 
\end{itemize}
\end{pro}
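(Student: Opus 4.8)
The plan is to show that the recursion in Proposition~\ref{klv} is an instance of the recursion in Definition~\ref{klmm} for the refined pircon obtained from $\iota$, with $x=q$ (so that $q-1-x=-1$, matching the third line of \eqref{klv2}). First I would recall from \cite{AH} (and the discussion preceding the statement) that $\iota$, ordered by the induced Bruhat order, is a pircon: for each non-identity twisted identity $w$ and each $s\in D_R(w)$, the map $u\mapsto u\ast s=\theta(s)us$ restricts to a special partial matching $M_s$ of $\iota_{\le w}$. The three cases $u\ast s\lhd u$, $u\ast s\rhd u$, $u\ast s=u$ are exactly the three alternatives allowed in Definition~\ref{accoppiamento parziale}, and the monotonicity condition is the content of the lifting property for twisted identities. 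I would state this as the key structural input, citing the relevant lemma of \cite{AH}, rather than reproving it.

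Next I would observe that, given this, one simply \emph{fixes} a refinement: for each $w\in\iota\setminus\{e\}$ choose some $s\in D_R(w)$ and set $M_w=M_s$ (restricted to $\iota_{\le w}$). With this choice, the defining recursion \eqref{klperpirconi} for $R^q$ reads, for $M_w=M_s$,
\begin{equation*}
R^q_{u,w}(q)=\left\{\begin{array}{ll}
R^q_{u\ast s,\,w\ast s}(q), & \mbox{if $u\ast s\lhd u$,}\\
(q-1)R^q_{u,\,w\ast s}(q)+qR^q_{u\ast s,\,w\ast s}(q), & \mbox{if $u\ast s\rhd u$,}\\
-R^q_{u,\,w\ast s}(q), & \mbox{if $u\ast s=u$,}
\end{array}\right.
\end{equation*}
together with $R^q_{w,w}=1$; here one uses that $w\ast s=M_w(w)$ and that, by the lifting property of special partial matchings, $u\le w$ forces $M_w(u)=u\ast s\le M_w(w)=w\ast s$, while $u\not\le w$ gives $R^q_{u,w}=0$ by the convention for polynomials indexed by incomparable pairs (or by extending the recursion). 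This is verbatim the recursion of Proposition~\ref{klv}, and the initial conditions agree, so by the uniqueness asserted in Definition~\ref{klmm} the two families coincide.

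The one genuine subtlety — and the step I expect to require the most care — is the claim that the family $\{R_{x,y}\}$ of Proposition~\ref{klv} is \emph{well defined independently of the chosen refinement}, i.e.\ that any two choices $s,s'\in D_R(w)$ give the same polynomials. This is precisely the coherence statement: I would show that $\iota$ is in fact a \emph{dircon} (Definition~\ref{dircone}), or at least that the multiplication matchings $M_s$, $s\in D_R(w)$, are pairwise coherent, so that Corollary~\ref{unici} applies and all refinements inside $S=\{M_s\}$ yield the same $R^x$-polynomials. For this one analyses the orbits of $\langle M_s,M_{s'}\rangle$ acting on $\iota_{\le w}$: by Lemma~\ref{noorbita} each such orbit is dihedral or chain-like, and one must check the divisibility conditions of the definition of strict coherence. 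I would argue this by passing through the identification of $\ast$-multiplication with the twisted conjugation action and invoking the corresponding structure of $\langle \lambda_{\theta(s)}\rho_s,\lambda_{\theta(s')}\rho_{s'}\rangle$-orbits in $S_{2n}$, reducing to a rank-two (dihedral) computation. Once coherence is in hand, Corollary~\ref{unici} closes the argument: the Kazhdan--Lusztig--Vogan $R$-polynomials for fixed point free involutions are exactly the $R^q$-polynomials of the pircon $\iota$, for any refinement, and — by Proposition~\ref{relazione}(3) — the associated $Q$-polynomials are the corresponding $R^{-1}$-polynomials.
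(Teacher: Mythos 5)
Your proposal does not actually prove the statement at hand; it proves (a version of) a different one. Proposition~\ref{klv} is an assertion about the Kazhdan--Lusztig--Vogan $R$-polynomials, which are defined \emph{independently} of the pircon machinery --- via the geometry of the $Sp(2n,\mathbb C)$-orbits on the flag variety of $SL(2n,\mathbb C)$, equivalently via the Hecke algebra module spanned by those orbits. The content of the proposition is that these externally defined polynomials satisfy the stated initial conditions and recursion. To verify this one must start from the actual definition of $R_{x,y}$ and compute how $T_s$ acts on the module of orbits; this is exactly what Hultman does, and the paper accordingly gives no proof but simply cites \cite[Proposition 5.1]{H}. Your argument never touches that definition: you take the recursion \eqref{klv2} as the defining property, observe that it matches the pircon recursion \eqref{klperpirconi} with $x=q$, and then work to show the resulting family is well defined. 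That establishes existence and uniqueness of \emph{a} family of polynomials satisfying the recursion, but says nothing about whether the Kazhdan--Lusztig--Vogan polynomials are that family --- which is precisely what is being claimed. In effect you have assumed the conclusion in order to set up the identification.

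What you have written is, in substance, an outline of the proof of Theorem~\ref{iotapircone} (that $\iota$ is a dircon and that the KLV $R$- and $Q$-polynomials coincide with the $R^q$- and $R^{-1}$-polynomials of $\iota$), and for that statement your strategy --- conjugation special partial matchings from \cite{AH}, orbit analysis via Lemma~\ref{noorbita}, coherence, and Corollary~\ref{unici} --- is essentially the paper's. But that theorem takes Proposition~\ref{klv} as an input (it is how one knows the KLV polynomials obey a recursion that can be compared with \eqref{klperpirconi}); it cannot be used to derive it. To repair the proof you would need to either reproduce Hultman's verification from the definition of the KLV polynomials or, as the paper does, cite it.
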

A similar result holds for the associated Kazhdan--Lusztig--Vogan $Q$-polynomials (see \cite[Proposition 5.3]{H}). We refer the reader to \cite{AH} for more details on this subject.

We want to show that the Kazhdan--Lusztig--Vogan $R$-polynomials and $Q$-polynomials for fixed point free involutions  lie in the theory of  Kazhdan--Lusztig $R^x$-polynomials of pircons. Indeed, we have the following result.
\begin{thm}
\label{iotapircone}
The poset $\iota$ is a dircon. Furthermore, the Kazhdan--Lusztig--Vogan $R$-polynomials and $Q$-polynomials for fixed point free involutions coincide, respectively, with the Kazhdan--Lusztig $R^q$-polynomials and $R^{-1}$-polynomials of $\iota$ as a dircon. 
 \end{thm}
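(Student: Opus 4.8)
The strategy is to identify a natural family of special partial matchings of $\iota$ coming from the $\ast$-action, verify that each such matching is indeed a special partial matching, and then check that these matchings satisfy the pircon-system (in fact, dircon) axioms; the identification of the $R$- and $Q$-polynomials with the $R^q$- and $R^{-1}$-polynomials will then be immediate from matching up the recursions in Proposition~\ref{klv} and Definition~\ref{klmm}.

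\emph{Step 1: the candidate matchings.} For $w\in\iota$ and $s\in D_R(w)$, define $\mu_s\colon\iota_{\leq w}\to\iota_{\leq w}$ by $\mu_s(u)=u\ast s=\theta(s)us$. One must check this is well defined (that $u\ast s\in\iota$ whenever $u\leq w$), that it is an involution, that $\{u,\mu_s(u)\}$ is always a covering pair of $\iota$ or a fixed point of the three allowed types, and that $\mu_s$ satisfies the covering-preservation condition in Definition~\ref{accoppiamento parziale}. Crucially $\mu_s(w)=w\ast s\lhd w$ because $s\in D_R(w)$: here one uses that for a twisted identity $\theta(w^{-1})w=w$, right descents govern the $\ast$-action in the expected way, so $w\ast s=\theta(s)ws\lhd w$ (this is essentially the content of the setup around Proposition~\ref{klv}, going back to \cite{AH,H}). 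These facts are exactly what is proved in \cite{AH}, where special partial matchings were introduced for this purpose, so Step~1 can largely be quoted; it shows $\iota$ is a pircon with $SPM_w\supseteq\{\mu_s:s\in D_R(w)\}$.

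\emph{Step 2: coherence, hence dircon.} To conclude $\iota$ is a dircon we must show that \emph{any} two $M,N\in SPM_w$ are coherent. The cleanest route is first to show that every special partial matching of $\iota_{\leq w}$ is of the form $\mu_s$ for some $s\in D_R(w)$ (an analysis of what a special partial matching must do near $\hat1=w$, combined with the structure of twisted identities), and then to show that $\mu_s$ and $\mu_t$ are $SPM_w$-coherent for all $s,t\in D_R(w)$. For the latter, one examines an orbit $\mathcal O$ of $\langle\mu_s,\mu_t\rangle$: by Lemma~\ref{noorbita} it is dihedral or chain-like, and one computes its rank in terms of the order of a suitable dihedral parabolic subgroup generated by reflections attached to $s,t$ (and the twist $\theta$), checking the divisibility conditions in the definition of strict coherence. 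The \emph{main obstacle} is precisely this coherence computation: unlike the zircon case, the $\ast$-action is a twisted conjugation, so the orbits can be chain-like, and one has to control both the dihedral and chain-like orbit ranks and see they all divide $\operatorname{rank}\langle\mu_s,\mu_t\rangle(w)$. I would handle it by reducing to the rank-two parabolic situation: restrict attention to the twisted identities inside $\langle s,t\rangle\cdot w$ or an appropriate dihedral reflection subgroup, where everything becomes an explicit finite check. (If a more direct argument is available, one can instead invoke that $\iota$ is known to be a ``nice'' poset — e.g. via the PL-ball/sphere structure of its intervals from Theorem~\ref{intervalli in pirconi} — but the matching-theoretic computation is the honest path.)

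\emph{Step 3: identification of the polynomials.} By Corollary~\ref{unici}, once $(\iota,\bigcup_w SPM_w)$ is a pircon system, the $R^x$-polynomials of $\iota$ as a dircon are independent of the refinement and every $\mu_s$ is strongly calculating. Now pick any refinement; the defining recursion \eqref{klperpirconi} with matching $\mu_s$ reads exactly, upon substituting $q-1-x=-1$ for $x=q$ via \eqref{q-1-}, as the recursion \eqref{klv2} for the Kazhdan--Lusztig--Vogan $R$-polynomials, with the same initial condition $R_{w,w}=1$ (and $R_{u,w}=0$ for $u\not\leq w$, which holds for $R^x$ by an easy induction). Since both families are \emph{uniquely} determined by the same recursion and initial values, $R^q_{u,w}=R_{u,w}$ for all $u,w\in\iota$. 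The same argument with $x=-1$, where $q-1-x=q$, matches \eqref{klperpirconi} against the analogous recursion for the $Q$-polynomials from \cite[Proposition~5.3]{H}, giving $R^{-1}_{u,w}=Q_{u,w}$. This completes the proof.
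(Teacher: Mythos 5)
Your Step 3 is fine and matches the paper, and your Step 1 correctly quotes \cite{AH} for the conjugation matchings $x\mapsto x\ast s$. The problem is Step 2, where your plan has two genuine gaps. First, you propose to show that \emph{every} special partial matching of $\iota_{\leq w}$ is a conjugation matching $\mu_s$. This is not what happens, and the paper does not (and cannot) assume it: it explicitly treats the case of a special partial matching $M$ that is \emph{not} of conjugation type, invoking Proposition~4.8 of \cite{AH} to produce a conjugation matching $M'$ that commutes with $M$, has no fixed points on the relevant orbits, and satisfies $M'(w)\neq M(w)$, whence $M$ and $M'$ are strictly coherent. Without either proving your classification claim (which the literature does not support) or supplying this pairing argument, your reduction to the matchings $\mu_s$ is unjustified.

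Second, and more seriously, your coherence computation glosses over the case where strict coherence actually \emph{fails}. For two distinct conjugation matchings $M=\mu_s$ and $N=\mu_{s'}$, the paper's Proposition~\ref{strettamente} shows they are strictly coherent if and only if $M(w)\neq N(w)$; when $M(w)=N(w)$ the orbit $\langle M,N\rangle(w)$ has rank $1$ while other orbits can have rank $2$ or $3$, so the divisibility condition is violated and no amount of ``explicit finite check in a rank-two parabolic'' will rescue strict coherence. One must instead exhibit a \emph{chain} of strictly coherent matchings ($S$-coherence): the paper does this by taking $x=M(w)=N(w)$, choosing $r\in D_R(x)$ commuting with $s$ or $s'$ to get an intermediate conjugation matching $C_r$ strictly coherent with both, and then ruling out the residual configuration $(s,r,s')=(s_{i-1},s_i,s_{i+1})$ with $D_R(x)=\{s_i\}$ by an explicit computation on the one-line notation of $x$. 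Your proposal never confronts the $M(w)=N(w)$ case, which is the actual crux of proving $\iota$ is a dircon.
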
 
 In order to prove Theorem~\ref{iotapircone}, we need some preliminary  observations.
We refer to \cite{AH} for more details concerning the set of twisted identities and its special partial matchings. As proved in \cite[Theorem 4.3]{AH}, given $ w\in \iota$  and $s\in D_R(w)$, the map $x\mapsto x \ast s$ is a special partial matching of the lower interval $[e,w]$ of $\iota$. Following \cite{AH}, we call a special partial matching of  this form  a \emph{conjugation} special partial matching.
 \begin{lem}
 \label{7.3}
 Let $w\in \iota\setminus{e}$, and $M$ and $N$ be two distinct conjugation special partial matchings of $w$, say $M(x)=x \ast s_i$ and $M(x)=x \ast s_j$ for all $x\leq w$. If $|i-j|>1$ (respectively, $|i-j|=1$) then any orbit of the action of $\langle M,N \rangle$ is of one of the types in Figure~\ref{differenza>1} (respectively, Figure~\ref{differenza=1}), where $M$ and $N$ are colored in solid and dashed black.
 \end{lem}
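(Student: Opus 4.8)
The plan is to reduce everything to a direct computation inside the symmetric group $S_{2n}$, using the explicit description of the conjugation special partial matchings $x \mapsto x \ast s_i = \theta(s_i)\,x\,s_i$. First I would set up notation: write $M(x) = x \ast s_i$ and $N(x) = x \ast s_j$, and recall that $\theta(s_k) = s_{2n-k}$, so $M$ is conjugation-type twisting by $s_i$ and $s_{2n-i}$ simultaneously, while $N$ twists by $s_j$ and $s_{2n-j}$. The group $\langle M, N\rangle$ acting on an orbit is governed by how the (ordinary) multiplication matchings $\rho_{s_i}, \lambda_{s_{2n-i}}$ on $[e,w] \subseteq S_{2n}$ interact with those for $s_j, s_{2n-j}$. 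By Lemma~\ref{noorbita} we already know every orbit is dihedral or chain-like, so the content of the lemma is precisely \emph{which} dihedral/chain-like posets of which ranks can occur — i.e., that the list in Figures~\ref{differenza>1} and \ref{differenza=1} is complete. The case split $|i-j|>1$ versus $|i-j|=1$ mirrors exactly the braid relation $m(s_i,s_j) \in \{2,3\}$ in $S_{2n}$, so I expect the ranks of the orbits to be bounded by small numbers ($\leq 2$ in the commuting case, $\leq 3$ in the braid case, with possible extra subtleties because $\theta$ may bring $s_{2n-i}$ into the picture, i.e.\ the indices $i, j, 2n-i, 2n-j$ may have further coincidences or adjacencies).

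Next I would work orbit by orbit. Fix an orbit $\mathcal O$ and its maximal element $z$ (as in the proof of Lemma~\ref{noorbita}); it suffices to analyze the word $\cdots MNM(z)$ versus $\cdots NMN(z)$ and find the smallest $r$ where they agree, and to determine for each element whether it is fixed by $M$ or $N$. Because $x \ast s = \theta(s) x s$, whether $M$ sends $x$ up, down, or fixes $x$ is controlled by the relation between $D_R(x)$, $D_L(x)$ and the pair $\{s_i, s_{2n-i}\}$ — and the delicate fixed-point case $x \ast s = x$ happens exactly when $s_i x = x s_{2n-i}$ (a "braid-avoiding" condition). I would enumerate the finitely many local configurations of $D_R(z) \cap \{s_i, s_j\}$ and $D_L(z) \cap \{s_{2n-i}, s_{2n-j}\}$ (and the degenerate subcases where, e.g., $j = 2n-i$, so that one of $M$'s reflections coincides with one of $N$'s), and for each one compute the resulting orbit using the multiplication rules in $S_{2n}$ together with the lifting property (Lemma~\ref{ll}). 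The braid relations in $S_{2n}$ — $s_i s_j = s_j s_i$ when $|i-j|>1$ and $s_i s_j s_i = s_j s_i s_j$ when $|i-j|=1$ — will force the closure of the orbit after the appropriate number of steps, giving precisely the pictures claimed.

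The main obstacle, I expect, is the bookkeeping around the interaction of the two "halves" of each conjugation matching: $M$ simultaneously involves $s_i$ and $\theta(s_i)=s_{2n-i}$, so when $i$ is close to $n$ (where $s_i$ and $s_{2n-i}$ themselves may be equal or adjacent) or when the index $j$ equals or neighbors $2n-i$, the naive picture "$\langle M,N\rangle$ behaves like a dihedral group of order $2m(s_i,s_j)$" breaks down and extra relations collapse the orbit to a smaller rank or produce a chain-like rather than dihedral orbit (the fixed-point cases). I would handle this by treating these coincidence/adjacency subcases separately and checking each against the explicit list in the figures. A secondary technical point is verifying that in each configuration the fixed points of $M$ and $N$ within the orbit land exactly where the figures show them (at the bottom and/or top of a chain-like orbit); this amounts to checking the condition $s_i x = x s_{2n-i}$ at the relevant elements, which follows from the subword property once the reduced words of $z$ and its images are pinned down. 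Once all cases are checked, the lemma follows by simply matching each computed orbit to one of the finitely many types displayed.
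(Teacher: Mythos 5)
Your proposal rests on the right mechanism --- the commutation/braid relations among $s_i,s_j$ and $\theta(s_i),\theta(s_j)$ force the orbit to close after $m(s_i,s_j)$ alternating steps --- but it implements this through a local, orbit-by-orbit enumeration of descent-set configurations that the paper shows is entirely unnecessary. The paper's proof is a single global identity of maps: since left and right multiplications commute with each other, and since $m(s_i,s_j)=m(s_{2n-i},s_{2n-j})$, one computes directly that $MN(x)=\theta(s_i)\theta(s_j)\,x\,s_js_i=\theta(s_j)\theta(s_i)\,x\,s_is_j=NM(x)$ for all $x\le w$ when $|i-j|>1$, and likewise $MNM=NMN$ when $|i-j|=1$. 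This bounds, for every orbit simultaneously, the smallest $r$ with $\underbrace{\cdots MNM}_{r}(z)=\underbrace{\cdots NMN}_{r}(z)$, and the proof of Lemma~\ref{noorbita} already expresses the rank of each dihedral (resp.\ chain-like) orbit as this $r$ (resp.\ $r-1$); matching the resulting rank bounds against the dihedral/chain-like dichotomy yields exactly the lists in Figures~\ref{differenza>1} and~\ref{differenza=1}, with no further checking of where fixed points sit. In particular, the ``degenerate subcases'' you flag as the main obstacle (coincidences or adjacencies among $i$, $j$, $2n-i$, $2n-j$, or $i$ near $n$) are harmless: the two halves of each conjugation never interact because left and right multiplication commute, and any extra coincidence could only impose \emph{additional} relations, shrinking orbits to types already present in the figures. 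Your plan would presumably succeed, but the descent-set case analysis, the appeal to the lifting and subword properties, and the separate verification of fixed-point positions are all bookkeeping that the one-line global computation plus Lemma~\ref{noorbita} renders moot.
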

\begin{proof}
If $|i-j|>1$ then  $m(s_i,s_j)=m(\theta(s_i),\theta(s_j))=2$; therefore $M$ and $N$ commute since $MN(x)=\theta(s_i)\theta(s_j)xs_js_i=\theta(s_j)\theta(s_i)xs_is_j=NM(x)$ for all $x\leq w$. If $|i-j|=1$ then  $m(s_i,s_j)=m(\theta(s_i),\theta(s_j))=3$; therefore $MNM=NMN$ since $$MNM(x)=\theta(s_i)\theta(s_j)\theta(s_i)xs_is_js_i=\theta(s_j)\theta(s_i)\theta(s_j)xs_js_is_j=NMN(x)$$ for all $x\leq w$. The result follows by Lemma~\ref{noorbita}.
\end{proof}

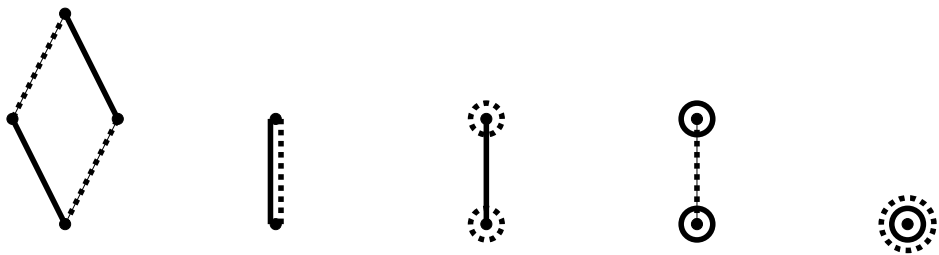
\begin{figure}[h]
\begin{center}
$$
\scalebox{.7}{
\begin{tikzpicture}
\draw[fill=black]{(0,0) circle(3pt)};
\draw[fill=black]{(1,2) circle(3pt)};
\draw[fill=black]{(0,4) circle(3pt)};
\draw[fill=black]{(-1,2) circle(3pt)};
\draw[dashed, line width=3pt]{(0,4)--(-1,2)};
 \draw[dashed, line width=3pt]{(0,0)--(1,2)}; 
\draw[line width=3pt]{(0,4)--(1,2)};
 \draw[line width=3pt]{(0,0)--(-1,2)}; 
 \draw{(-1,2)--(0,4)}; 
  \draw{(1,2)--(0,0)}; 

  \draw[fill=black]{(4,2) circle(3pt)};
\draw[fill=black]{(4,0) circle(3pt)};
 \draw[ line width=3pt]{(3.9,0)--(3.9,2)}; 
 \draw[dashed,  line width=3pt]{(4.1,0)--(4.1,2)}; 
 
 \draw[fill=black]{(8,2) circle(3pt)};
\draw[fill=black]{(8,0) circle(3pt)};
\draw[dashed, line width=3pt]{(8,0) circle (.3)};
\draw[dashed, line width=3pt]{(8,2) circle (.3)};
 \draw[ line width=3pt]{(8,0)--(8,2)}; 
\draw{(8,0)--(8,2)}; 

 \draw[fill=black]{(12,2) circle(3pt)};
\draw[fill=black]{(12,0) circle(3pt)};
\draw[ line width=3pt]{(12,0) circle (.3)};
\draw[ line width=3pt]{(12,2) circle (.3)};
 \draw[dashed, line width=3pt]{(12,0)--(12,2)}; 
\draw{(12,0)--(12,2)}; 

\draw[fill=black]{(16,0) circle(3pt)};
\draw[ line width=3pt]{(16,0) circle (.3)};
\draw[dashed, line width=3pt]{(16,0) circle (.5)};
\end{tikzpicture}}$$
\end{center}
\caption{\label{differenza>1} Orbits when $m(s_i,s_j)=2$}
\end{figure}
  
      \begin{figure}[h]
\begin{center}
$$
\scalebox{.7}{
\begin{tikzpicture}

\draw[fill=black]{(0,0) circle(3pt)};
\draw[fill=black]{(-1,2) circle(3pt)};
\draw[fill=black]{(-1,4) circle(3pt)};
\draw[fill=black]{(1,2) circle(3pt)};
\draw[fill=black]{(1,4) circle(3pt)};
\draw[fill=black]{(0,6) circle(3pt)};
\draw[dashed, line width=3pt]{(-1,4)--(-1,2)};
 \draw[dashed, line width=3pt]{(0,0)--(1,2)}; 
 \draw[dashed, line width=3pt]{(1,4)--(0,6)}; 
\draw[line width=3pt]{(1,4)--(1,2)};
 \draw[line width=3pt]{(0,0)--(-1,2)}; 
 \draw[line width=3pt]{(-1,4)--(0,6)}; 
 \draw{(-1,2)--(1,4)}; 
 \draw{(1,2)--(-1,4)}; 
  \draw{(-1,2)--(-1,4)}; 
  \draw{(1,2)--(0,0)}; 
  \draw{(1,4)--(0,6)}; 
 
  \draw[fill=black]{(4,2) circle(3pt)};
\draw[fill=black]{(4,0) circle(3pt)};
 \draw[ line width=3pt]{(3.9,0)--(3.9,2)}; 
 \draw[dashed,  line width=3pt]{(4.1,0)--(4.1,2)}; 
 
 \draw[fill=black]{(8,2) circle(3pt)};
\draw[fill=black]{(8,0) circle(3pt)};
\draw[fill=black]{(8,4) circle(3pt)};
\draw[dashed,  line width=3pt]{(8,4)--(8,2)};
\draw[dashed, line width=3pt]{(8,0) circle (.3)};
\draw[ line width=3pt]{(8,4) circle (.3)};
 \draw[ line width=3pt]{(8,0)--(8,2)}; 
\draw{(8,0)--(8,2)}; 

 \draw[fill=black]{(12,2) circle(3pt)};
\draw[fill=black]{(12,0) circle(3pt)};
\draw[fill=black]{(12,4) circle(3pt)};
\draw[  line width=3pt]{(12,4)--(12,2)};
\draw[ line width=3pt]{(12,0) circle (.3)};
\draw[dashed, line width=3pt]{(12,4) circle (.3)};
 \draw[dashed, line width=3pt]{(12,0)--(12,2)}; 
\draw{(12,0)--(12,2)}; 

\draw[fill=black]{(16,0) circle(3pt)};
\draw[ line width=3pt]{(16,0) circle (.3)};
\draw[dashed, line width=3pt]{(16,0) circle (.5)};
\end{tikzpicture}
}$$
\end{center}
\caption{\label{differenza=1} Orbits when $m(s_i,s_j)=3$}
\end{figure}
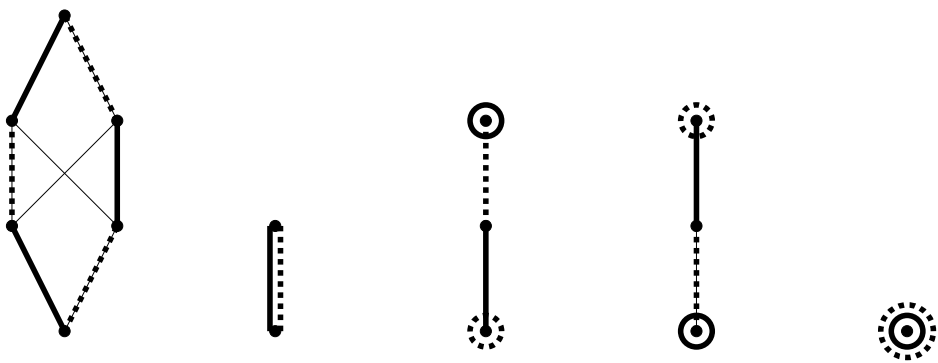
\begin{pro}
\label{strettamente}
Let $w\in \iota\setminus{e}$, and $M$ and $N$ be two distinct conjugation special partial matchings of $w$. Then $M$ and $N$ are strictly coherent if and only if $M(w)\neq N(w)$.
\end{pro}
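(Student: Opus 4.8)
The plan is to run through the orbit classification of Lemma~\ref{7.3} and check the divisibility conditions of the definition of strict coherence directly. First I would dispose of the easy implication: if $M(w)=N(w)$, then $\langle M,N\rangle(w)$ is a dihedral orbit of rank~$1$ (this is exactly the ``two element dihedral orbit'' of Figure~\ref{rango1}, since $M$ and $N$ send $w$ down to the same element). For strict coherence we would need the rank (or rank plus~$1$) of \emph{every} orbit $\mathcal O$ of $\langle M,N\rangle$ to divide~$1$, forcing every dihedral orbit to have rank~$1$ and every chain-like orbit to have rank~$0$; but by Lemma~\ref{7.3} the possible orbit types are drawn from Figures~\ref{differenza>1} and~\ref{differenza=1}, and as soon as $M\neq N$ there exists some orbit $\mathcal O$ violating this (e.g.\ a rank~$2$ dihedral orbit when $m(s_i,s_j)=2$, or a rank~$2$ or $3$ type when $m(s_i,s_j)=3$, or a rank~$1$ chain-like orbit). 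So in this case $M$ and $N$ are not strictly coherent, giving the contrapositive.

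For the substantive direction, assume $M(w)\neq N(w)$; then $m(M,N)=\operatorname{rank}\langle M,N\rangle(w)\geq 2$. Now I would split according to $|i-j|$. If $|i-j|>1$, Lemma~\ref{7.3} shows $m(s_i,s_j)=2$, so $M$ and $N$ commute and $\langle M,N\rangle(w)$ is a dihedral orbit of rank exactly~$2$; moreover every orbit of $\langle M,N\rangle$ is one of the five types in Figure~\ref{differenza>1}, whose ranks (after the $+1$ correction for chain-like orbits) are all in $\{1,2\}$ and hence all divide~$2=m(M,N)$. If $|i-j|=1$, then $m(s_i,s_j)=3$, so $MNM=NMN$ and $\langle M,N\rangle(w)$ has rank exactly~$3$; the orbits are the types in Figure~\ref{differenza=1}, with corrected ranks in $\{1,2,3\}$, all dividing~$3$. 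In both cases the conditions in the definition of strict coherence hold for every orbit, so $M$ and $N$ are strictly coherent.

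The main thing to get right — and the only place a careful argument is needed — is the bookkeeping in the second direction: one must verify that $\operatorname{rank}\langle M,N\rangle(w)$ is \emph{exactly} $2$ (resp.\ $3$) rather than merely a divisor of the relevant orbit ranks, which uses $M(w)\neq N(w)$ together with the braid relation $MN=NM$ (resp.\ $MNM=NMN$) established in Lemma~\ref{7.3}, and that the listed orbit types in the two figures genuinely exhaust all possibilities and all have the claimed ranks. Once the figures are taken as established by Lemma~\ref{7.3}, the divisibility checks are immediate: the dihedral orbit types have rank $1$ or $2$ (resp.\ $1$, $2$, or $3$), and the chain-like types contribute $\operatorname{rank}+1 \in\{1,2\}$ (resp.\ $\{1,2,3\}$), each dividing $m(M,N)$. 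I would also remark that the rank~$1$ chain-like orbit (contributing $2$) is exactly why the case $m(s_i,s_j)=2$ is consistent, and that no chain-like orbit of rank $\geq 2$ with $m(s_i,s_j)=2$ can occur — again read off from Figure~\ref{differenza>1}.
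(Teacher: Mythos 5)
Your overall strategy is exactly the paper's: the published proof of Proposition~\ref{strettamente} is the single line ``Straightforward by Lemma~\ref{7.3}'', and what you have written is a legitimate unpacking of that line. Your treatment of the backward direction is correct (if $M(w)=N(w)$ the top orbit has rank $1$, while $M\neq N$ forces some orbit of corrected rank at least $2$, since an orbit containing a point $x$ with $M(x)\neq N(x)$ cannot be a rank-$0$ chain, a rank-$1$ dihedral orbit, or a fixed point of both matchings), and so is the case $m(s_i,s_j)=2$ of the forward direction.

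However, your divisibility check in the case $m(s_i,s_j)=3$ contains a genuine error. You assert that the orbits of Figure~\ref{differenza=1} have ``corrected ranks in $\{1,2,3\}$, all dividing $3$'', and again later that the dihedral types have rank ``$1$, $2$, or $3$'' and the chain-like types contribute $\mathrm{rank}+1\in\{1,2,3\}$, ``each dividing $m(M,N)$''. This is self-contradictory: $2$ does not divide $3$, so if a rank-$2$ dihedral orbit or a rank-$1$ chain-like orbit could occur here, $M$ and $N$ would \emph{not} be strictly coherent and the proposition would be false. The content of Lemma~\ref{7.3} in this case is precisely that these orbit types do \emph{not} occur: the dihedral orbits in Figure~\ref{differenza=1} have rank $1$ or $3$ only, and the chain-like ones have rank $0$ or $2$, so the corrected ranks lie in $\{1,3\}$ and the divisibility holds. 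Since you yourself identify this bookkeeping as ``the only place a careful argument is needed'', you should redo it: read off from Figure~\ref{differenza=1} that the corrected ranks are $\{1,3\}$, note that the top orbit, being dihedral of rank at least $2$ (because $w$, $M(w)$, $N(w)$ are distinct), must therefore be the rank-$3$ hexagon, and conclude. With that correction the argument is complete and coincides with the paper's intended one.
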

\begin{proof}
Straightforward by Lemma~\ref{7.3}.
\end{proof}
\begin{proof}[Proof of Theorem~\ref{iotapircone}]
Let $w\in \iota\setminus{e}$ and $M,N \in SPM_w$: we need to show that $M$ and $N$ are coherent. Clearly, we may assume $\rho(w)>1$ ($\rho$ being the rank function of $\iota$). If $M$ is not a conjugation special partial matching, then there exists a conjugation special partial matching $M'$ of $w$ commuting with $M$ such that $M'(w)\neq M(w)$ (see Proposition 4.8 of \cite{AH}); hence $\langle M, M'\rangle (w)$ is a dihedral orbit of rank 2, while every other orbit is dihedral of rank 1 or 2 (indeed, it cannot be chain-like since $M$ and $M'$ have no fixed points, again by Proposition 4.8 of \cite{AH}). Therefore, $M$ and $M'$ are strictly coherent. 
So we may assume that $M$ and $N$ are both conjugation special partial matchings (say $M(x)=x \ast s$ and $M(x)=x \ast s'$, for all $x\in \iota$ such that $x\leq w$), and  that $M(w)=N(w)$ by Proposition~\ref{strettamente}.

Let $x=M(w)=N(w)$ and $r\in D_R(x)$. If $r$ commutes with one among $s$ and $s'$, then $w\ast r \lhd w$ by Lemma~\ref{7.3}. Hence $C_r: x\mapsto x \ast r$  is a conjugation special partial matching of $w$; since  $ C_r$ is  strictly coherent with both $M$ and $N$ by Proposition~\ref{strettamente}, $M$ and $N$ are coherent. Thus we are reduced to the case where $(s,r,s')=(s_{i-1},s_i, s_{i+1})$ and  $\{s_i\} =D_R(x)$ holds for a certain  $i\in [2,2n-2]$. But this is impossible since $x \ast s_{i-1}=x \ast s_{i+1}\rhd x$ implies
$$(x(i-1),x(i),x(i+1),x(i+2))= (2n-i-1, 2n-i,2n-i+1,2n-i+2),$$
and hence $s_i\notin D_R(x)$.

In order to show that the Kazhdan--Lusztig--Vogan $R$-polynomials and $Q$-polynomials coincide with the Kazhdan--Lusztig $R^q$-polynomials and $R^{-1}$-polynomials of $\iota$ as a dircon, compare  \eqref{klperpirconi} with \eqref{klv2} and \cite[Proposition 5.3]{H}.
\end{proof}

\subsection{Parabolic Kazhdan--Lusztig polynomials of a Coxeter system}
\label{para}
In this subsection, we fix an arbitrary  Coxeter system $(W,S)$ and a subset $H \subseteq S$.   The Bruhat order on $W$ induces an ordering on the set of minimal coset representatives  $W^{H}$ and, for all $u,v \in W^{H}$, on the parabolic  interval $[u,v]^{H}=\{z\in [u,v] : z\in W^H \}$. 

For the reader's convenience, we recall the following theorem (see  \cite[Sections~2 and 3]{Deo87} for a proof).
\begin{thm}
\label{7.1}
Let $x \in \{ -1,q \}$. 
 Then  there is a unique family of polynomials $\{ R_{u,v}^{H,x}(q)
\} _{u,v \in W^{H}} \subseteq {\bf Z}[q]$ such that, for all $u,v \in W^{H}$:
\begin{enumerate}
\item $R^{H,x}_{u,v}(q)=0$ if $u \not \leq v$;
\item $R^{H,x}_{u,u}(q)=1$;
\item if $u<v$ and $s \in D_{L}(v)$, then
\[ R_{u,v}^{H,x} (q)= \left\{ \begin{array}{ll}
R_{su,sv}^{H,x}(q), & \mbox{if $s \in D_{L}(u)$,} \\
(q-1)R_{u,sv}^{H,x}(q)+qR_{su,sv}^{H,x}(q), & \mbox{if $s \notin D_{L}(u)$
and $su \in W^{H}$,} \\
(q-1-x)R_{u,sv}^{H,x}(q), & \mbox{if $s \notin D_{L}(u)$ and $su \notin
W^{H}$.}
\end{array} \right. \]
\end{enumerate}
Moreover,  there is a unique family of polynomials $\{ P^{H,x}_{u,v}(q) \} 
_{u,v \in  W^{H}}  \subseteq {\bf Z} [q]$, such that, for all 
$u,v \in W^{H}$:
\begin{enumerate}
\item $P^{H,x}_{u,v}(q)=0$ if $u \not \leq v$;
\item $P^{H,x}_{u,u}(q)=1$;
\item deg$(P^{H,x}_{u,v}(q)) \leq  \frac{1}{2}\left(
\ell(v)-\ell(u)-1 \right) $, if $u < v$;
\item 
\( q^{\ell(v)-\ell(u)} \, P^{H,x}_{u,v} (q^{-1})
= \sum _{z\in[u,v]_H}   R^{H,x}_{u,z}(q) \,
P^{H,x}_{z,v}(q). \)
\end{enumerate}
\end{thm}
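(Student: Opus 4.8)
The plan is to reduce each of the two existence‑and‑uniqueness statements to a computation in a module over the Hecke algebra $\mathcal H(W)$, followed by an elementary linear‑algebra argument, in the spirit of \cite{Deo87}.

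\emph{The $R$-polynomials.} Uniqueness is immediate by induction on $\ell(v)$: item (1) disposes of the pairs with $u\not\leq v$, item (2) of $u=v=e$, and if $e\ne v$ one picks any $s\in D_L(v)$ and item (3) expresses $R^{H,x}_{u,v}$ through polynomials $R^{H,x}_{\bullet,sv}$ with $\ell(sv)<\ell(v)$ (note $sv\in W^H$, since $v\in W^H$ and $sv<v$, a standard consequence of the Lifting Property, Lemma~\ref{ll}; likewise $su\in W^H$ when $s\in D_L(u)$). For existence I would introduce the parabolic module $\mathfrak M^{H,x}:=\mathcal H(W)\otimes_{\mathcal H(W_H)}A_x$, where $A_x=\mathbb Z[q^{\frac12},q^{-\frac12}]$ is made an $\mathcal H(W_H)$-module by letting each $T_s$, $s\in H$, act as multiplication by $x$ (legitimate since $x^2=(q-1)x+q$ for $x\in\{q,-1\}$). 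Since $\mathcal H(W)$ is free as a right $\mathcal H(W_H)$-module on $\{T_v:v\in W^H\}$ by the parabolic factorization (Proposition~\ref{fattorizzo}), the elements $\mathfrak m_v:=T_v\otimes1$ form a basis of $\mathfrak M^{H,x}$, and a short case analysis of $sv$ relative to $W^H$ — using $v\in W^H$, $sv<v\Rightarrow sv\in W^H$, and $sv>v$, $sv\notin W^H\Rightarrow sv=vt$ with $t\in H$, $\ell(sv)=\ell(v)+1$ (see \cite{BB}) — shows that the left action of $T_s$ is
\[
T_s\,\mathfrak m_v=\begin{cases} \mathfrak m_{sv}, & sv\lhd v,\\ q\,\mathfrak m_{sv}+(q-1)\,\mathfrak m_v, & v\lhd sv\in W^H,\\ x\,\mathfrak m_v, & v\lhd sv\notin W^H. \end{cases}
\]
The bar involution $h\mapsto\bar h$ of $\mathcal H(W)$ (semilinear, with $\overline{T_w}=(T_{w^{-1}})^{-1}$) descends to a semilinear involution $\beta$ of $\mathfrak M^{H,x}$ fixing $\mathfrak m_e$; the only compatibility to check is that $\overline{T_s}$ acts on $A_x$ by the scalar $q^{-1}x+q^{-1}-1$, which equals $q^{-1}$ if $x=q$ and $-1$ if $x=-1$. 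One then \emph{defines} $R^{H,x}_{u,v}$ by $\beta(\mathfrak m_v)=q^{-\ell(v)}\sum_{u\in W^H,\,u\leq v}(-1)^{\ell(v)-\ell(u)}R^{H,x}_{u,v}(q)\,\mathfrak m_u$; applying $\beta$ to $\mathfrak m_v=T_s\,\mathfrak m_{sv}$ (valid for $s\in D_L(v)$) and comparing the coefficient of each $\mathfrak m_u$ yields item (3) for \emph{all} $s\in D_L(v)$ simultaneously, while items (1), (2) and integrality ($R^{H,x}_{u,v}\in\mathbb Z[q]$) follow from the recursion itself.

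\emph{The $P$-polynomials.} I would transplant the classical Kazhdan--Lusztig argument to $\mathfrak M^{H,x}$. Fix $v$ and argue by downward induction on $\ell(v)-\ell(u)$, with $P^{H,x}_{v,v}:=1$; assuming $P^{H,x}_{z,v}$ defined for $z\in W^H$ with $u<z\leq v$, isolate the $z=u$ term of (4) to rewrite it as
\[
q^{\ell(v)-\ell(u)}P^{H,x}_{u,v}(q^{-1})-P^{H,x}_{u,v}(q)=E_{u,v}(q),\qquad E_{u,v}(q):=\sum_{u<z\leq v,\ z\in W^H}R^{H,x}_{u,z}(q)\,P^{H,x}_{z,v}(q),
\]
with $E_{u,v}$ known. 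The crux is the anti‑palindromy identity $q^{\ell(v)-\ell(u)}E_{u,v}(q^{-1})=-E_{u,v}(q)$; granting it, writing $E_{u,v}(q)=\sum_j e_jq^j$ and setting $P^{H,x}_{u,v}(q):=\sum_{2i<\ell(v)-\ell(u)}e_{\ell(v)-\ell(u)-i}\,q^i$ produces a polynomial of degree at most $\frac12(\ell(v)-\ell(u)-1)$ that solves the displayed equation, and the degree bound (3) forces exactly this choice — so existence and uniqueness are obtained together. The anti‑palindromy identity itself follows from the orthogonality relation $\sum_{u\leq z\leq v}(-1)^{\ell(v)-\ell(u)}q^{\ell(v)-\ell(z)}R^{H,x}_{u,z}(q)\,R^{H,x}_{z,v}(q^{-1})=\delta_{u,v}$ — which is precisely $\beta^2=\mathrm{id}$ read off in the basis $\{\mathfrak m_u\}$ — combined with the inversion formulas for the pairs $(z,v)$, $z>u$, supplied by the induction hypothesis: substituting the latter into $q^{\ell(v)-\ell(u)}E_{u,v}(q^{-1})$ and collapsing the inner sum by orthogonality returns $-E_{u,v}(q)$.

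\emph{Main obstacle.} With the tensor‑product definition the module axioms come for free, so the real work lies in (a) identifying the left $T_s$-action on $\{\mathfrak m_v\}$ as the three‑line rule above — the point where the Lifting Property and the combinatorics of minimal coset representatives enter — and (b) the anti‑palindromy lemma for $E_{u,v}$, whose proof must marshal the degree bound, the subinterval inversion formula, and $R$-orthogonality in the correct order. I expect (b) to be the delicate step; everything else is formal or a bounded case check. (Alternatively one can dispense with the module, defining $R^{H,x}_{u,v}$ by the recursion for one fixed choice of $s\in D_L(v)$ and proving independence of the choice via the dihedral‑orbit bookkeeping developed for pircons in Section~\ref{quattro}, but the module route is shorter and is the one in \cite{Deo87}.)
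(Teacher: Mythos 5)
The paper itself does not prove this theorem: it is quoted from Deodhar, with the proof deferred to \cite[Sections 2 and 3]{Deo87}. Your proposal reconstructs exactly that proof --- the induced module $\mathcal H(W)\otimes_{\mathcal H(W_H)}A_x$, the bar involution $\beta$, the definition of $R^{H,x}_{u,v}$ as the coefficients of $\beta(\mathfrak m_v)$, and the classical Kazhdan--Lusztig existence-and-uniqueness argument for the $P$-polynomials via anti-palindromy of the error term $E_{u,v}$ --- so the architecture is the intended one, and the two delicate points you isolate (the three-line action formula and the anti-palindromy lemma) are indeed the ones that carry the proof. Your statement of the orthogonality relation coming from $\beta^2=\mathrm{id}$, the compatibility check that $\overline{T_s}$ acts on $A_x$ by $\bar x$, and the explicit solution of $q^{n}P(q^{-1})-P(q)=E(q)$ under the degree bound are all correct.

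One concrete correction is needed. With $\mathfrak m_v=T_v\otimes 1$ and the quadratic relation $T_s^2=(q-1)T_s+q$ of Section~\ref{preliminari}, the left action is
\[
T_s\,\mathfrak m_v=\begin{cases} q\,\mathfrak m_{sv}+(q-1)\,\mathfrak m_v, & \text{if } sv\lhd v,\\ \mathfrak m_{sv}, & \text{if } v\lhd sv\in W^H,\\ x\,\mathfrak m_v, & \text{if } v\lhd sv\notin W^H, \end{cases}
\]
i.e.\ your first two cases are interchanged: what you wrote is the $\Gamma$-type action of Section~\ref{quattro}, not the $L$-type action induced on the basis $\{T_v\otimes 1\}$. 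As stated, your formula contradicts the identity $\mathfrak m_v=T_s\,\mathfrak m_{sv}$ for $s\in D_L(v)$ that you invoke immediately afterwards, and feeding it into the $\beta$-computation would produce a recursion different from item (3). With the corrected action the computation does go through: for instance, in the third case $\overline{T_s}=q^{-1}T_s+(q^{-1}-1)$ acts on $\mathfrak m_u$ by the scalar $q^{-1}x+q^{-1}-1$, and the normalization $q^{-\ell(\cdot)}(-1)^{\ell(\cdot)-\ell(u)}$ contributes a factor $-q$ when passing from $sv$ to $v$, yielding $-q\,(q^{-1}x+q^{-1}-1)=q-1-x$ as required. Everything else, including the use of Deodhar's lemma ($v\in W^H$, $v\lhd sv\notin W^H$ implies $sv=vt$ with $t\in H$) and the closure of $W^H$ under $v\mapsto sv$ for $s\in D_L(v)$, is correctly identified; only the displayed action formula needs to be fixed.
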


The polynomials $R^{H,x}_{u,v}(q)$ and $P^{H,x}_{u,v}(q)$ 
are  the {\em
parabolic Kazhdan--Lusztig $R$-polynomials} and {\em parabolic  Kazhdan--Lusztig polynomials} of $W^{H}$ of type $x$. 
If $H=\emptyset$ then $R_{u,v}^{\emptyset ,-1}(q)=R_{u,v}^{\emptyset ,q}(q)$ and $
P_{u,v}^{\emptyset ,-1}(q)=P_{u,v}^{\emptyset ,q}(q)$
 are the ordinary Kazhdan--Lusztig $R$-polynomials $R_{u,v}(q)$ and Kazhdan--Lusztig polynomials $P_{u,v}(q)$  of $W$.
 
 \begin{rem}
\label{senso}
The parabolic Kazhdan--Lusztig $R$-polynomials and the parabolic Kazhdan--Lusztig polynomials are equivalent. More precisely, given $w\in W^H$, one can compute the family $\{P_{u,v}^{H,x}(q)\}_{u,v\in [e,w]^H}$ once one knows   the family $\{R_{u,v}^{H,x}(q)\}_{u,v\in [e,w]^H}$, and vice versa.
\end{rem}

\bigskip

Let $w \in W^H$. 
As in \cite{Mtrans, M}, we say that a special matching of $[e,w]$ is {\em $H$-special} provided that 
$$u \leq w, \; u \in W^H, \; M(u) \lhd u \Rightarrow M(u) \in W^H.$$

Note that a $\emptyset$-special matching is a special matching and that a left multiplication matching  is $H$-special 
for all $H \subseteq S$. 

\bigskip
The following lemmas are needed in the proof of the main result of this subsection.
\begin{lem}
\label{noconfigurazione}
Let $(W,S)$ be a Coxeter system. Fix $H\subseteq S$. The Hasse diagram of $W$ cannot have the following configuration as a subdiagram:
$$
\begin{tikzpicture}

\node[below] at (0,-0.5){$f$};
\node[left] at (-1,1){$d$};
\node[right] at (1,1){$u$};
\draw{(0,-0.5)--(-1,1)}; 
\draw{(0,-0.5)--(1,1)}; 

\draw{(-1,3)--(-1,1)}; 
\draw{(1,3)--(1,1)}; 
\draw{(-1,3)--(1,1)}; 
\draw{(1,3)--(-1,1)}; 
\draw[fill=black]{(1,1) circle(3pt)};
\draw[fill=black]{(1,3) circle(3pt)};
\draw[fill=black]{(-1,3) circle(3pt)};
\draw[fill=black]{(-1,1) circle(3pt)};

\draw[fill=black]{(0,-0.5) circle(3pt)};

\node[left] at (-1,3){$b$};
\node[right] at (1,3){$c$};
\draw{(-1,3)--(0,4.5)}; 
\draw{(1,3)--(0,4.5)}; 
\draw[fill=black]{(0,4.5) circle(3pt)};
\node[above] at (0,4.5){$a$};
\end{tikzpicture}
$$
with $a,b,c \notin W^H$ and $u,f\in W^H$.
\end{lem}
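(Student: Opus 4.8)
\textbf{Proof strategy for Lemma~\ref{noconfigurazione}.}
The plan is to argue by contradiction, assuming such a configuration occurs inside the Hasse diagram of $W$, and then to exploit the rigidity of the Bruhat order on the dihedral intervals hidden in the picture. The key structural fact is that the displayed picture contains, on top, the diamond $\{b,c\}\subseteq [?, a]$ with $a\rhd b$, $a\rhd c$; on the bottom, the ``$\vee$'' with $f\lhd d$ and $f\lhd u$; and in the middle the full $K_{2,2}$ with covers $d\lhd b$, $d\lhd c$, $u\lhd b$, $u\lhd c$. From $d\lhd b$ and $d\lhd c$ we get $d^{-1}b,\,d^{-1}c\in T$, and since also $b\rhd$ nothing forces it, I would first extract the two reflections $t_1=d^{-1}b$, $t_2=d^{-1}c$ and note $t_1\neq t_2$ (else $b=c$). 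The point is then that the interval $[d,a]$ has rank $2$ and contains at least the four elements $d,b,c,a$; by the classification of rank-two Bruhat intervals (they are dihedral, Figure~\ref{diheinte}), $[d,a]$ must be exactly $\{d,b,c,a\}$, i.e. a square. Likewise $[f,b]$ is a rank-two interval containing $f,d,u,b$, hence $[f,b]=\{f,d,u,b\}$ is a square, and similarly $[f,c]=\{f,d,u,c\}$.

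The heart of the argument is a length/parity bookkeeping on a left descent $s$. The plan is to pick $s\in D_L(a)$ and track it down through the squares. Because $[d,a]$ is a square with top $a$, exactly one of $b,c$ — say $b$ — satisfies $s\in D_L(b)$ and $s\notin D_L(c)$ (the two lower corners of a Bruhat square are distinguished by any fixed left descent of the top), unless $s$ lies below both, which the square structure forbids at this rank. Then, descending to the square $[f,b]$ with top $b$ and $s\in D_L(b)$, the Lifting Property (Lemma~\ref{ll}) applied to the pair $f<b$ together with $s\in D_L(b)$ pins down how $s$ acts on $d$ and on $u$: exactly one of them, say $d$, has $s\in D_L(d)$. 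Now run the parabolic membership constraints: $u\in W^H$ means $D_L(u)\cap H=\varnothing$ — wait, $W^H$ is defined by right descents, so I would instead work on the symmetric (right-descent) side throughout, i.e. replace every $D_L$ above by $D_R$ and every left multiplication by right multiplication; the combinatorics is identical. With $u,f\in W^H$ and $a,b,c\notin W^H$, pick $s\in H$ witnessing $a\notin W^H$, i.e. $s\in D_R(a)\cap H$; the square arguments then force $s\in D_R(b)\cap D_R(c)$ as well (consistent with $b,c\notin W^H$), and force $s\notin D_R(f)$, $s\notin D_R(u)$ (consistent with $f,u\in W^H$). The contradiction is extracted by looking at the middle square $[u,b]$ (or $[u,c]$): its top $b$ has $s\in D_R(b)$ while its bottom $u$ has $s\notin D_R(b)$, so by the Lifting Property the other two corners $d$ and — but $[u,b]$ has only corners $u,d,b$ plus possibly another; more precisely in the square $[f,b]=\{f,d,u,b\}$ with $s\in D_R(b)$, $s\notin D_R(f)$, $s\notin D_R(u)$, the element $ds$ must lie in $[f,b]$, and $ds\neq f,d,u,b$ gives the clash; alternatively $d\lhd b$ with $s\in D_R(b)\setminus D_R(d)$ forces $d\leq bs$ and $ds\leq b$, placing a fifth element in a rank-two interval, contradicting that $[f,b]$ is a square.

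\textbf{Main obstacle.} The delicate point is getting the descent $s$ to behave consistently across all three squares simultaneously: one must be sure that the \emph{same} generator $s\in H$ realizing $a\notin W^H$ propagates to $b$ and $c$, and that its non-membership propagates correctly to $u$ and $f$, without collapsing any two of the six vertices. This is where the rank-two interval classification (dihedral intervals, Figure~\ref{diheinte}) does the real work: it forbids any rank-two interval from having five elements, and it forces each of $[d,a]$, $[f,b]$, $[f,c]$ to be a genuine square rather than a longer dihedral interval, because each already displays a square. I would therefore organize the write-up as: (1) identify the three squares and their corners; (2) fix $s\in D_R(a)\cap H$ and propagate membership/non-membership of $s$ to all six vertices using the square structure and Lemma~\ref{ll}; (3) derive a fifth element in $[f,b]$ (or equivalently in $[f,c]$), contradicting squareness. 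A symmetric remark handles the case where the witnessing generator is chosen from the other side, so no generality is lost.
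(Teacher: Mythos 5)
There is a genuine gap, in fact two. First, your propagation step is not valid: from $s\in D_R(a)$ and $b,c\lhd a$, the Lifting Property (Lemma~\ref{ll}) only gives, for each coatom, ``either $s\in D_R(b)$ or $b=as$'', and exactly one of $b,c$ \emph{is} $as$; that coatom does \emph{not} have $s$ as a right descent. So you cannot conclude $s\in D_R(b)\cap D_R(c)$, and indeed in the configuration the paper constructs one has $b=as_r$ with $s_r\notin D_R(b)$. Second, and more seriously, your endgame fails even when the descent does propagate: in the square $[f,b]=\{f,d,u,b\}$ with $s\in D_R(b)$ and $s\notin D_R(u)\cup D_R(f)$, the Lifting Property applied to $d\lhd b$ forces $s\in D_R(d)$ (otherwise $d\leq bs=u$, impossible for rank reasons), and then applied to $f\lhd d$ it forces $f=ds$. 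This is perfectly consistent: right multiplication by $s$ restricts to a matching of the square pairing $b\leftrightarrow u$ and $d\leftrightarrow f$, so no ``fifth element'' appears and there is no clash. A single generator of $H$ can never contradict $f\in W^H$, because $f$ can always be the coatom $ds$ that escapes the descent.

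The missing idea is that you need \emph{two distinct} generators of $H$ that are both right descents of $d$; then $f$, being a single coatom of $d$, can coincide with at most one of $ds_{r-1},ds_r$, so by the Lifting Property it retains the other as a right descent, contradicting $f\in W^H$. The paper produces these two generators via the Subword Property rather than descent bookkeeping: since $a\notin W^H$ it has a reduced expression $s_1\cdots s_r$ with $s_r\in H$; since $u\in W^H$ and $\ell(u)=\ell(a)-2$, one of the two deleted letters must be $s_r$, which identifies $b=as_r=s_1\cdots s_{r-1}$; since $b\notin W^H$ one may further arrange $s_{r-1}\in H$, whence $u=s_1\cdots s_{r-2}$, $c=us_r$, and two applications of Lifting give $\{s_{r-1},s_r\}\subseteq D_R(d)$. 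Your observation that the three rank-two intervals are exactly four-element squares is correct and is used implicitly (to identify $as_r$ and $us_r$ among $\{b,c\}$), but by itself it does not carry the argument.
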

\begin{proof}
Since $a\notin W^H$, the element $a$ has a right descent in $H$ and admits a reduced expression $a=s_1 \cdots s_r$ with $s_r \in H$. We can obtain a reduced expression for $u$ by deleting 2 appropriate letters in the reduced expression for $a$; one of these letters must be $s_r$ since $u\in W^H$. Evidently $s_1 \cdots s_{r-1}\in [u,a]=\{u,b,c,a\}$ and we may assume $b=s_1 \cdots s_{r-1}$. Since $b\notin W^H$, up to changing the chosen reduced expression of $a$, we may assume $s_{r-1} \in H$. Hence $u  =s_1 \cdots s_{r-2}$. Since $u  s_r \rhd u$, the Lifting Property applied to the multiplication on the right by $s_r$ implies $u  s_r \leq a$, and so $u  s_r=c$. 
The Lifting Property (applied to to the multiplications on the right by $s_r$ and $s_{r-1}$) also implies
$\{s_{r-1},s_r\}\subseteq D_R(d)$. 
Since $f$ cannot simultaneously be equal to $d  s_{r-1}$ and $d s_{r}$, by the Lifting Property $D_R(f)\cap \{s_{r-1},s_r\}\neq \emptyset$, and so $f$ cannot be in $W^H$.
\end{proof}

\begin{lem}
\label{noorbitaparabolico}
Let $(W,S)$ be a Coxeter system. Fix $H\subseteq S$, $w\in W^H$, and two $H$-special matchings $M,N$ of $w$.  Let $\mathcal O$ be an orbit of $\langle M,N \rangle $ with bottom element $\hat{0}_{\mathcal O}$. Then $\mathcal O \cap W^H$ can be either
\begin{enumerate}
\item the empty set, or
\item the entire orbit $\mathcal O$, or
\item the singleton $\{\hat{0}_{\mathcal O}\}$, or 
\item the chain $\mathcal C_{\hat{0}_{\mathcal O},u}$, for an appropriate coatom $u$ of the top element of $\mathcal O$,  
\end{enumerate}
where, for all $v\in \mathcal O$, we denote by $\mathcal C_{\hat{0}_{\mathcal O},v}$  the unique saturated chain from $\hat{0}_{\mathcal O}$ to $v$ such that, for any two of its elements $x,y\in \mathcal C_{\hat{0}_{\mathcal O},v}$ with $x\lhd y$, either $y=M(x)$, or $y=N(x)$.
\end{lem}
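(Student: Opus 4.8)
The plan is to combine the structure of the orbit $\mathcal O$ with the two preceding lemmas. Since $M$ and $N$ are matchings they have no fixed points, so by Lemma~\ref{noorbita} every orbit of $\langle M,N\rangle$ is dihedral, and by Lemma~\ref{tuttaorbita} the orbit $\mathcal O$ is a Bruhat interval $[\hat 0_{\mathcal O},\hat 1_{\mathcal O}]$ isomorphic to a dihedral interval, say of rank $k$. First I record the shape of its Hasse diagram: the rank-$i$ part of $\mathcal O$ has two elements for $0<i<k$ and one element for $i\in\{0,k\}$; every element of rank $i$ is covered, in $\mathcal O$ and hence in $W$, by every element of rank $i+1$; and $\mathcal O$ is the union of its two maximal chains $\hat 0_{\mathcal O}=c_0\lhd c_1\lhd\dots\lhd c_k=\hat 1_{\mathcal O}$ and $\hat 0_{\mathcal O}=d_0\lhd d_1\lhd\dots\lhd d_k=\hat 1_{\mathcal O}$ obtained by iterating $M$ and $N$ from $\hat 0_{\mathcal O}$, with $c_i\neq d_i$ for $0<i<k$ and with any two consecutive elements of either chain being images of one another under $M$ or under $N$.

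The engine of the proof is the one consequence of $H$-speciality that if $x\in\mathcal O\setminus\{\hat 0_{\mathcal O}\}$ lies in $W^H$ then the element of $\mathcal O$ immediately below $x$ on its chain — which is $M(x)$ or $N(x)$, and which covers of $x$ are the same in $W$ as in $\mathcal O$ — also lies in $W^H$. Descending along the chains this settles three cases at once: if $\hat 0_{\mathcal O}\notin W^H$ then $\mathcal O\cap W^H=\emptyset$ (case (1)); if $\hat 1_{\mathcal O}\in W^H$ then descending from $\hat 1_{\mathcal O}$ along both chains forces $\mathcal O\cap W^H=\mathcal O$ (case (2)); and if $\hat 0_{\mathcal O}\in W^H$ and $\hat 1_{\mathcal O}\notin W^H$, then on each chain the elements lying in $W^H$ form an initial segment, so there are $0\le t_L,t_R\le k-1$ with $\mathcal O\cap W^H=\{c_i:i\le t_L\}\cup\{d_i:i\le t_R\}$ (and $t_L=t_R=0$ trivially if $k\le 1$, which is case (3)).

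It remains, for $k\ge 2$, to show $(t_L,t_R)\in\{(0,0),(k-1,0),(0,k-1)\}$; by the symmetry exchanging $M$ with $N$ (hence the two chains, and $t_L$ with $t_R$) I may assume $t_L\ge t_R$. I rule out $1\le t_L\le k-2$: then the four ranks $t_L-1,t_L,t_L+1,t_L+2$ of $\mathcal O$ contain the forbidden configuration of Lemma~\ref{noconfigurazione}, with $f=c_{t_L-1}$ (read $\hat 0_{\mathcal O}$ if $t_L=1$), $u=c_{t_L}$, $d=d_{t_L}$, $b=c_{t_L+1}$, $c=d_{t_L+1}$, and $a=c_{t_L+2}$ (read $\hat 1_{\mathcal O}$ if $t_L+2=k$); the six vertices lie in distinct ranks in the prescribed pattern, all eight required covering relations hold because every rank-$i$ element of $\mathcal O$ is covered by every rank-$(i+1)$ element, and $a,b,c\notin W^H$ (because $\hat 1_{\mathcal O}\notin W^H$, or because their rank exceeds $\max(t_L,t_R)=t_L$) while $u,f\in W^H$, a contradiction. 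Hence $t_L\in\{0,k-1\}$. If $t_L=0$ then $t_R=0$, case (3). If $t_L=k-1$, I claim $t_R=0$: otherwise, if $t_R=k-1$ then $\hat 1_{\mathcal O}\notin W^H$ covers the two distinct elements $c_{k-1},d_{k-1}\in W^H$, and if $1\le t_R\le k-2$ then $d_{t_R+1}\notin W^H$ covers the two distinct elements $d_{t_R}\in W^H$ and $c_{t_R}\in W^H$ (the latter using $t_R\le t_L=k-1$). But an element $x\in W\setminus W^H$ cannot cover two distinct elements $y,z\in W^H$: from $y\le x$ and Proposition~\ref{fattorizzo} we get $y=y^H\le x^H$, and $x\notin W^H$ gives $\ell(x^H)\le\ell(x)-1=\ell(y)$, forcing $y=x^H$, and likewise $z=x^H$, so $y=z$. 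Thus $t_R=0$, $\mathcal O\cap W^H=\{c_0,\dots,c_{k-1}\}$ is the chain $\mathcal C_{\hat 0_{\mathcal O},c_{k-1}}$, and $c_{k-1}$ is a coatom of $\hat 1_{\mathcal O}$: this is case (4) (with the roles of the two chains possibly exchanged, had I not taken $t_L\ge t_R$).

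The step I expect to be most delicate is the appeal to Lemma~\ref{noconfigurazione}: one must position the configuration correctly inside $\mathcal O$ and check the two boundary situations ($t_L=1$, so $f=\hat 0_{\mathcal O}$, and $t_L+2=k$, so $a=\hat 1_{\mathcal O}$), verifying that all eight edges of the configuration genuinely occur in the Hasse diagram of $W$ — which is precisely why the dihedral shape of $\mathcal O$ must be pinned down first. Everything else is routine descent along chains together with the one-line argument via $x\mapsto x^H$.
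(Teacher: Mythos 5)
Your proof is correct and follows essentially the same route as the paper's: descend along the two alternating chains of the dihedral orbit using $H$-speciality, and invoke Lemma~\ref{noconfigurazione} to force any maximal element of $\mathcal O\cap W^H$ other than $\hat{0}_{\mathcal O}$ to sit at corank $1$. The one place you go beyond the paper's (very terse) argument is in explicitly excluding the case where both chains contribute above rank $0$, which you do via the fact that an element outside $W^H$ has at most one coatom in $W^H$ (this is \cite[Lemma 4.1]{Mtrans}, which the paper itself invokes in the proof of Proposition~\ref{hricetta}); that is a genuine detail the paper's own proof leaves implicit, and your handling of it is correct.
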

\begin{proof}
Suppose that $|\mathcal O \cap W^H|\notin  \{0,1, |\mathcal O| \}$. Let $u$ be a maximal element in $\mathcal O \cap W^H $. Clearly $u$ cannot be the bottom element of $\mathcal O$; by definition of $H$-special, $u$ cannot be the top element (otherwise $\mathcal O \cap W^H = \mathcal O$) and $\mathcal C_{0_{\mathcal O},u}$ is contained in $W^H$. It remains to show that $u$ is a coatom of the top element of $\mathcal O$. This follows from Lemma~\ref{noconfigurazione}.
\end{proof}

Following \cite[Subsection~4.1]{Mtrans}, for convenience' sake  we say that an $H$-special matching $M$ of $w$ \emph{calculates} the parabolic Kazhdan--Lusztig $R$-polynomials (or  is \emph{calculating}, for short) if, for all $u \in W^H$, $u \leq w$, we have
\begin{equation*}
 R_{u,w}^{H,x} (q)= \left\{ \begin{array}{ll}
R_{M(u),M(w)}^{H,x}(q), & \mbox{if $M(u)  \lhd u$,} \\
(q-1)R_{u,M(w)}^{H,x}(q)+qR_{M(u),M(w)}^{H,x}(q), & \mbox{if $M(u) \rhd u$
and $M(u) \in W^{H}$,} \\
(q-1-x)R_{u,M(w)}^{H,x}(q), & \mbox{if $M(u) \rhd u$ and  $M(u) \notin W^{H}$.} 
\end{array} \right. 
\end{equation*}
For  this definition, it is essential that the special matching be $H$-special. Note that all left multiplication matchings are calculating. 

The main result of \cite{M} is the following.
\begin{thm}
\label{computa}
Let $M$ be an $H$-special matching of $w$.
The parabolic Kazhdan--Lusztig $R$-polynomial $R_{u,w}^{H,x}(q)$ satisfies: 
\begin{equation}
 \label{calcola}
 R_{u,w}^{H,x} (q)= \left\{ \begin{array}{ll}
R_{M(u),M(w)}^{H,x}(q), & \mbox{if $M(u)  \lhd u$,} \\
(q-1)R_{u,M(w)}^{H,x}(q)+qR_{M(u),M(w)}^{H,x}(q), & \mbox{if $M(u) \rhd u$
and $M(u) \in W^{H}$,} \\
(q-1-x)R_{u,M(w)}^{H,x}(q), & \mbox{if $M(u) \rhd u$ and  $M(u) \notin W^{H}$.} 
\end{array} \right. 
\end{equation}
\end{thm}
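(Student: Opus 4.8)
The plan is to reduce the parabolic statement to the pircon framework developed in Sections~\ref{orbits}--\ref{KLMMpoly}, and then invoke Theorem~\ref{secommutano}. First I would observe that the interval $[e,w]^H$ is (the lower interval of) a pircon: for each $v\in W^H$ with $v\neq e$, pick any $s\in D_L(v)$ and consider the map $\lambda_s^H$ sending $u$ to $su$ if $su\in W^H$ and to $u$ otherwise. The Lifting Property (Lemma~\ref{ll}), together with the standard fact that $su\notin W^H$ forces $su\lhd u$ to fail (more precisely, $su\notin W^H$ with $su>u$ cannot happen for $u\in W^H$, $s\in S$), shows that $\lambda_s^H$ is a special partial matching of $[e,v]^H$ in the sense of Definition~\ref{accoppiamento parziale}; the fixed points are exactly the $u\in W^H$ with $su\notin W^H$. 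Hence $W^H$ (hence each $[e,w]^H$) is a pircon, and the left multiplication matchings $\lambda_s^H$ are special partial matchings. Moreover, by Theorem~\ref{7.1}(3), the $R^{H,x}$-polynomials of $(W^H,\{\lambda_{s_v}^H\}_v)$ for any fixed choice of $s_v\in D_L(v)$ satisfy precisely the recursion \eqref{klperpirconi}, so they \emph{are} the Kazhdan--Lusztig $R^x$-polynomials of $W^H$ as a refined pircon, and each such $\lambda_{s_v}^H$ is calculating by definition.

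Next I would set up coherence. Given $w\in W^H$ and two $H$-special matchings $M,N$ of $w$, I want to show $M$ and $N$ are coherent, and in fact that any $H$-special matching is strictly coherent with some left multiplication matching. The key is Lemma~\ref{noorbitaparabolico}, which classifies $\mathcal O\cap W^H$ for an orbit $\mathcal O$ of $\langle M,N\rangle$ acting on $[e,w]$ into four shapes: empty, all of $\mathcal O$, the bottom singleton, or a chain $\mathcal C_{\hat0_{\mathcal O},u}$ to a coatom. After intersecting with $W^H$ — i.e. passing to the orbit of $\langle M,N\rangle$ on the pircon $[e,w]^H$, which by Lemma~\ref{noorbita} is dihedral or chain-like — the four cases of Lemma~\ref{noorbitaparabolico} translate into: the orbit disappears, it stays a full dihedral orbit, it becomes a rank-$0$ chain, or it becomes a chain-like orbit whose rank is one less than that of $\mathcal O$. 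In the chain-like case, the rank of the orbit on $[e,w]^H$ is (rank of $\mathcal O$) $-1$, and since $M,N$ restrict to ordinary matchings on $[e,w]$ (Bruhat intervals are well-behaved) the rank of $\langle M,N\rangle(w)$ inside $[e,w]$ is a multiple of each orbit rank by the classical theory of special matchings in Coxeter groups (or directly: the braid-move structure); intersecting with $W^H$ one checks the divisibility conditions of the strict coherence definition hold. I would spell this out orbit by orbit to verify Definition of strictly coherent.

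With these two ingredients I would run the induction in exactly the pattern of Corollary~\ref{unici}. Fix $w$ and an $H$-special matching $M$ of $w$; I want $M$ calculating. Induct on $\ell(w)$. Choose $s\in D_L(w)$ and let $N=\lambda_s^H$, which is strongly calculating (the restriction of a left multiplication matching to any lower interval is again a left multiplication matching, hence calculating). By the inductive hypothesis the restriction of $M$ to every $[e,z]^H$ with $z<w$, $M(z)\lhd z$ is calculating. If $M$ and $N$ are strictly coherent, Theorem~\ref{secommutano} applies directly and gives that $M$ is (strongly) calculating, which is \eqref{calcola}. If $M(w)=N(w)$ (so $M,N$ fail to be strictly coherent in the degenerate way), one argues as in the proof of Theorem~\ref{iotapircone}: produce an intermediate left multiplication matching strictly coherent with both, or note $M=N$ directly. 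The main obstacle I anticipate is the careful bookkeeping in translating Lemma~\ref{noorbitaparabolico}'s four cases into the divisibility conditions of strict coherence — in particular confirming that a ``chain to a coatom'' orbit really does yield a chain-like orbit of rank (rank $\mathcal O)-1$ in the pircon $[e,w]^H$ and that this rank-plus-one divides $\operatorname{rank}\langle M,N\rangle(w)$ — together with checking that $W^H$ with left multiplication matchings genuinely forms a pircon (i.e.\ that $\lambda_s^H$ satisfies the third axiom of Definition~\ref{accoppiamento parziale}, which is where one uses Lemma~\ref{noconfigurazione}-type configuration arguments rather than pure Lifting Property).
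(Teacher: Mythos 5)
You are trying to prove Theorem~\ref{computa} by reducing it to the pircon machinery of Sections~4--5 (realize $[e,w]^H$ as a refined pircon via left multiplication partial matchings, identify the parabolic $R^{H,x}$-polynomials with the pircon $R^x$-polynomials via Deodhar's recursion, then show every $H$-special matching is coherent with a left multiplication matching and invoke Theorem~\ref{secommutano}). Be aware that the paper itself gives no proof of this statement: it is imported verbatim as the main result of \cite{M}, where it is established for \emph{arbitrary} Coxeter systems by a long direct induction on $\ell(w)$ with a case analysis of special matchings, not through coherence of special partial matchings. Your route is essentially Theorem~\ref{quoziente sistema pircone} run in reverse, and that is where it breaks.

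The genuine gap is the coherence step. Your argument needs, for an $H$-special matching $M$ of $w$ that is not a left multiplication matching, a left multiplication matching commuting with $M$ and disagreeing with $M$ at $w$; this is \cite[Corollary~3.8]{Mtrans} and holds only for simply laced $W$ (the paper stresses this inside the proof of Theorem~\ref{quoziente sistema pircone}). The remark following that theorem exhibits an explicit obstruction: for $W$ with $m(s,t)\ge 5$, $m(s,p)=3$, $m(t,p)\ge 3$, $H=\{p\}$ and $w=ststps$, the interval $[e,w]$ has exactly three $H$-special matchings, all sending $w$ to $tstps$, and no two of the associated special partial matchings of $[e,w]^H$ are coherent ($[e,w]^H$ is not even a dircon). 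Yet Theorem~\ref{computa} asserts that all three calculate $R^{H,x}_{\cdot,w}$. So in the non-simply-laced case the conclusion you want is strictly stronger than what coherence can deliver, and your fallback for the degenerate case $M(w)=N(w)$ (``produce an intermediate left multiplication matching strictly coherent with both, as in Theorem~\ref{iotapircone}'') is precisely what does not exist in that example --- the trick used for twisted identities relies on structural facts about $\iota$ that have no analogue here. Your setup steps are sound (the $\lambda_s^H$ are special partial matchings by Proposition~\ref{hricetta} since left multiplication matchings are $H$-special, and Theorem~\ref{7.1}(3) does identify the parabolic $R$-polynomials with the pircon $R^x$-polynomials for the left-multiplication refinement without circularity), and with the orbit bookkeeping via Lemma~\ref{noorbitaparabolico} carried out your argument would yield the theorem for simply laced $W$; but as a proof of the stated theorem for general $(W,S)$ it fails, and no patch within the coherence framework can succeed.
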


By \cite[Theorem 7.7]{AHH}, the parabolic quotient $W^H$ is a pircon.
\begin{pro}
\label{hricetta}
Let $w\in W^H$. An $H$-special matching $M$ of $[e,w]$ gives rise to a special partial  matching $M^H$ of $[e,w]^H$, which is defined as follows:
\begin{equation}
 M^H(x)= \left\{ \begin{array}{ll}
M(x), & \mbox{if $M(x)  \in W^H $,} \\
x, & \mbox{if $M(x)  \notin W^H $,} 
\end{array} \right. 
\end{equation}
for all $x\in [e,w]^H$.
\end{pro}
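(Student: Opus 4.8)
The plan is to verify directly that the map $M^H$ defined by the displayed formula satisfies the three conditions in Definition~\ref{accoppiamento parziale} (special partial matching) relative to the poset $P=[e,w]^H$, which has maximum $\hat1_P=w$. First I would check that $M^H$ is well-defined and an involution: for $x\in[e,w]^H$, either $M(x)\in W^H$, in which case $M(x)\in[e,w]^H$ as well (since $M$ is a matching of $[e,w]$ and $M(x)\leq w$), or $M(x)\notin W^H$ and we set $M^H(x)=x$. To see that $M^H$ is an involution, the only nontrivial case is when $M^H(x)=M(x)=y\in W^H$; then $M(y)=x\in W^H$, so $M^H(y)=M(y)=x$, as desired. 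The case $M^H(x)=x$ is trivially an involution.

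Next I would verify the edge/direction condition: for all $x$, we need $M^H(x)\lhd x$, $M^H(x)=x$, or $x\lhd M^H(x)$, where $\lhd$ now denotes the covering relation in $[e,w]^H$. When $M^H(x)=x$ there is nothing to prove. When $M^H(x)=M(x)\neq x$, we know $\{x,M(x)\}$ is an edge in the Hasse diagram of $[e,w]$ (so $\ell$ differs by $1$), and since both $x$ and $M(x)$ lie in $W^H$ and the rank function of $W^H$ as a pircon is inherited from $\ell$, the pair $\{x,M(x)\}$ is a covering relation in $[e,w]^H$ with the same direction as in $[e,w]$; hence $M^H(x)\lhd x$ or $x\lhd M^H(x)$ accordingly. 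The condition $M^H(\hat1_P)\lhd\hat1_P$ follows because $M$ is $H$-special and, being a special matching of $[e,w]$, satisfies $M(w)\lhd w$ (every special matching sends the top element down); since $w\in W^H$, $H$-specialness forces $M(w)\in W^H$, so $M^H(w)=M(w)\lhd w$ in $[e,w]^H$.

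The main obstacle will be the third condition: if $x\lhd y$ in $[e,w]^H$ and $M^H(x)\neq y$, then $M^H(x)<M^H(y)$. The subtlety is that a covering relation $x\lhd y$ in $[e,w]^H$ need not be a covering relation in $[e,w]$ — there may be intermediate elements of $W\setminus W^H$ — so one cannot directly invoke the special matching axiom for $M$. I would split into cases according to whether $M$ fixes, lowers, or raises $x$ and $y$ in the ambient interval, and combine the $H$-specialness of $M$ with the Lifting Property (Lemma~\ref{ll}) and the subword characterization of Bruhat order. When $M^H(x)=x$ (i.e.\ $M(x)\notin W^H$) the required inequality $x\leq M^H(y)$ reduces to showing $x\leq M(y)$ or $x=M^H(y)$, which follows from $x<y$ together with a case analysis on $M(y)$; when $M^H(x)=M(x)\in W^H$ and $M^H(y)=M(y)\in W^H$, one picks a reduced expression realizing $x\lhd y$ suitably and tracks the effect of the matching. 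The key structural input making all this work is exactly the content of Lemma~\ref{noconfigurazione} (forbidding the bad local configuration) and the orbit description in Lemma~\ref{noorbitaparabolico}, which guarantee that the passage to $W^H$ does not create violations of the special partial matching axioms. Once all three conditions are checked, $M^H$ is a special partial matching of $[e,w]^H$, completing the proof.
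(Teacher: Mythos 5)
Your verification of the first two axioms and of $M^H(w)\lhd w$ is fine, and you are right that the third axiom is the crux; but your plan for that axiom does not close, and it points to the wrong tools. The genuinely hard case is: $x\lhd y$ in $[e,w]^H$, $M^H(y)=y$ (so $y\lhd M(y)\notin W^H$, using $H$-specialness to exclude $M(y)\lhd y$), and $x\lhd M^H(x)=M(x)\in W^H$ with $M(x)\neq y$. Here $M^H(x)>x$ while $M^H(y)=y\rhd x$, so the required inequality $M^H(x)<M^H(y)$ can only hold if this configuration is impossible --- and that impossibility is exactly what must be proved. The paper's argument: the special matching axiom for $M$ in $[e,w]$ gives $M(x)\lhd M(y)$, so the element $M(y)\notin W^H$ covers the two distinct elements $y$ and $M(x)$ of $W^H$, contradicting the fact that an element outside $W^H$ has at most one coatom in $W^H$ (\cite[Lemma 4.1]{Mtrans}). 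Your sketch never isolates this case, and the ``key structural inputs'' you name, Lemma~\ref{noconfigurazione} and Lemma~\ref{noorbitaparabolico}, do not supply the needed fact: those lemmas concern orbits of \emph{pairs} of $H$-special matchings and are used later, in the proof of Theorem~\ref{quoziente sistema pircone}, not here. Without the one-coatom fact (or an equivalent), your case analysis cannot rule out the bad configuration.

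Two smaller points. The ``subtlety'' you raise --- that a covering in $[e,w]^H$ might fail to be a covering in $[e,w]$ --- is not an obstacle: $W^H$ is graded by the length function, so $x\lhd y$ in $[e,w]^H$ forces $\ell(y)=\ell(x)+1$ and hence $x\lhd y$ in $W$; this is precisely what licenses applying the special matching axiom for $M$ directly, and it should be stated rather than left hanging. Also, in the case $M^H(y)=M(y)\lhd y$, no reduced-expression bookkeeping is needed: the axiom gives $M(x)\leq M(y)$, whence $\ell(M(x))\leq\ell(x)$ and so $M(x)\lhd x$; then $H$-specialness yields $M(x)\in W^H$ and $M^H(x)=M(x)\lhd M(y)=M^H(y)$ immediately.
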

\begin{proof}
The first two properties in Definition~\ref{accoppiamento parziale} are trivial. Let us prove the third  one. Let $x\lhd y$ and $M^H(x)\neq y$.  

Suppose that $M^H(y)\rhd y$. If $M^H(x)=x$ or $M^H(x)\lhd x$, then clearly  $M^H(x)<M^H(y)$ . If $M^H(x)\rhd x$ then $M^H(x)=M(x)\lhd M(y)=M^H(y)$ since $M$ is a special matching.

 Suppose that $M^H(y)\lhd y$. By the definition of a special matching,   $M(x)<x$ and   $M(x)\lhd M(y)$, and we are done since    $M(x)=M^H(x)$ and $ M(y)=M^H(y)$. 
  
    Suppose that $M^H(y)= y$. By the definition of $M^H$, we have  $y \lhd M(y) \notin W^H$. We are done if we show that it is not possible that $x\lhd M^H(x) \in W^H$. Indeed, this is not possible since, otherwise, $M(y)$ would cover both $y$ and  $M(x)$ by the definition of a special matching, but  an element outside $W^H$ has at most one coatom in $W^H$ (see \cite[Lemma 4.1]{Mtrans}). 
\end{proof}

Let $SPM^H\subseteq \bigcup_{w\in W^H\setminus\{e\}}SPM_w$ be the set of all special partial matchings that are obtained from $H$-special matchings by the recipe of Proposition~\ref{hricetta}: $$SPM^H=\{M^H \in SPM_w : M  \text{ is an $H$-special matching of  $w\in W^H\setminus\{e\}$}\}.$$
\begin{thm}
\label{quoziente sistema pircone}
If $W$ is a simply laced Coxeter group, then $(W^H, SPM^H)$ is a pircon system. Furthermore, for both $x=q$ and $x=-1$, the parabolic  Kazhdan--Lusztig $R^{H,x}$-polynomials  coincide with the Kazhdan--Lusztig $R^x$ polynomials of $(W^H, SPM^H)$ as a pircon. 
  \end{thm}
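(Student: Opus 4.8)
The plan is to check that $(W^H,SPM^H)$ satisfies the four requirements of a pircon system (Definition~\ref{sistema}) and then to match the two recursions. Requirement~(1) is \cite[Theorem~7.7]{AHH}; requirement~(2) is Proposition~\ref{hricetta}; and for requirement~\eqref{tre}, given $w\in W^H\setminus\{e\}$ one picks $s\in D_L(w)$ and uses that the left multiplication matching $\lambda_s$ of $[e,w]$ is $H$-special, so that (since $w\in W^H$ and $\lambda_s(w)=sw\lhd w$ force $sw\in W^H$) $\lambda_s^H\in SPM^H$ is a special partial matching of $[e,w]^H$ with $\lambda_s^H(w)=sw\lhd w$. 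The real content is requirement~(4) and the final comparison.

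For requirement~(4) I would first go back from $W^H$ to $W$. Let $w\in W^H\setminus\{e\}$ and let $M,N\in SPM^H$ be special partial matchings of $[e,w]^H$ with $M(w)\lhd w$ and $N(w)\lhd w$; then $M=M_1^H$ and $N=N_1^H$ for $H$-special matchings $M_1,N_1$ of $[e,w]$ (if $M,N$ come, via Proposition~\ref{hricetta}, from $H$-special matchings of a larger interval, one restricts them to $[e,w]$ using the Lifting Property, Lemma~\ref{ll}, and Lemma~\ref{restringe}, which keeps them $H$-special). Being matchings of $[e,w]$, $M_1$ and $N_1$ fix nothing and send the maximum $w$ down; since $w\in W^H$, $H$-speciality gives $M_1(w),N_1(w)\in W^H$, so the top of the orbit $\langle M_1,N_1\rangle(w)$ lies in $W^H$, and Lemma~\ref{noorbitaparabolico} then forces $\langle M_1,N_1\rangle(w)\subseteq W^H$ (the only option of that lemma compatible with the top being in $W^H$). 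Hence $\langle M^H,N^H\rangle(w)=\langle M_1,N_1\rangle(w)$, of some rank $m$, and $M^H,N^H$ act on it as $M_1,N_1$. For any other orbit $\mathcal O$ of $\langle M_1,N_1\rangle$ that meets $W^H$, Lemma~\ref{noorbitaparabolico} gives three cases, each of which I would work out: $\mathcal O\cap W^H=\mathcal O$ yields a dihedral $\langle M^H,N^H\rangle$-orbit of the same rank; $\mathcal O\cap W^H=\{\hat{0}_{\mathcal O}\}$ yields a chain-like orbit of rank $0$; and $\mathcal O\cap W^H=\mathcal C_{\hat{0}_{\mathcal O},u}$ (with $u$ a coatom of $\hat{1}_{\mathcal O}$) yields a chain-like orbit of rank $\mathrm{rank}(\mathcal O)-1$, where one of $M^H,N^H$ fixes $\hat{0}_{\mathcal O}$ (the move that goes up the other side of $\mathcal O$, out of $W^H$) and one fixes $u$ (the move from $u$ up to $\hat{1}_{\mathcal O}\notin W^H$). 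In every case the requirement imposed by Definition~\ref{sistema} on that orbit boils down to ``$\mathrm{rank}(\mathcal O)$ divides $m$''. Therefore $M^H$ and $N^H$ are strictly coherent as soon as every orbit of $\langle M_1,N_1\rangle$ on $[e,w]$ has rank dividing $m=\mathrm{rank}\,\langle M_1,N_1\rangle(w)$, and $SPM^H$-coherent as soon as $M_1$ and $N_1$ are joined by a chain of $H$-special matchings of $[e,w]$ with consecutive members strictly coherent in $W$.

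The heart of the proof is thus the Coxeter-theoretic statement that, for $W$ simply laced, the $H$-special matchings of $[e,w]$ form a single such coherence class. The matchings $\lambda_s$, $s\in D_L(w)$, are $H$-special, and are pairwise strictly coherent because every orbit of $\langle\lambda_s,\lambda_t\rangle$ on $[e,w]$ is a left $W_{\{s,t\}}$-coset, hence a dihedral interval of rank $m(s,t)\le3$, equal to that of the top orbit. So it suffices to connect an arbitrary $H$-special matching of $[e,w]$, through $H$-special matchings, to one of the $\lambda_s$. Here I would invoke the structure theory of special matchings of Coxeter systems (\cite{CM1,CM2}) together with the results of \cite{M} on $H$-special matchings: the simply laced hypothesis $m(s,t)\le3$ keeps all the relevant dihedral subquotients of rank at most $3$, and Lemma~\ref{noconfigurazione} is exactly what excludes, in this situation, the obstructing orbit configurations (an orbit of rank $2$ inside a top orbit of rank $1$ or $3$) that make the argument fail in the non-simply-laced case. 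I expect this step---controlling the orbit ranks of pairs of $H$-special matchings while keeping the connecting chain within the $H$-special matchings---to be the main obstacle.

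Finally, for the polynomials, Corollary~\ref{unici} now applies: it provides a single family $\{R^x_{u,v}(q)\}_{u,v\in W^H}$ for which every matching in $SPM^H$ is strongly calculating, so \eqref{klperpirconi} holds with $M_w$ replaced by $M_1^H$ for any $H$-special matching $M_1$ of $[e,w]$. For such $M_1$ one has, for all $u\in W^H$ with $u\le w$, the dictionary: $M^H(u)\lhd u$ iff $M_1(u)\lhd u$ (and then $M^H(u)=M_1(u)\in W^H$); $M^H(u)\rhd u$ iff $M_1(u)\rhd u$ and $M_1(u)\in W^H$ (and then $M^H(u)=M_1(u)$); $M^H(u)=u$ iff $M_1(u)\notin W^H$ (forcing $M_1(u)\rhd u$); and $M^H(w)=M_1(w)$. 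Under this dictionary, and using \eqref{q-1-}, the pircon recursion \eqref{klperpirconi} for $(W^H,SPM^H)$ is precisely the parabolic recursion \eqref{calcola} of Theorem~\ref{computa}, with the same initial conditions (value $1$ on the diagonal, vanishing off the Bruhat order). By the uniqueness assertions of Definition~\ref{klmm} and Theorem~\ref{7.1}, $R^x_{u,v}(q)=R^{H,x}_{u,v}(q)$ for all $u,v\in W^H$ and both $x\in\{q,-1\}$, which is the second assertion of the theorem.
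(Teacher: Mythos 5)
Your reduction of requirement (4) of Definition~\ref{sistema} to a statement about $H$-special matchings of $[e,w]$ is sound (the case analysis of how an orbit of $\langle M_1,N_1\rangle$ induces an orbit of $\langle M^H,N^H\rangle$ via Lemma~\ref{noorbitaparabolico} is correct and is essentially what the paper does), and your final dictionary matching the pircon recursion with the parabolic recursion of Theorem~\ref{computa} is also correct. But the heart of the proof --- connecting an arbitrary $H$-special matching of $[e,w]$ to the left multiplication matchings by a chain along which strict coherence can be verified --- is exactly the step you leave open, and ``invoking the structure theory of special matchings'' with the expectation that it is ``the main obstacle'' is not an argument. The paper closes this gap with a single precise tool: by \cite[Corollary~3.8]{Mtrans}, if $W$ is simply laced and $M$ is a special matching of $w$ that is not a left multiplication matching, then there is a left multiplication matching $L_M$ that \emph{commutes} with $M$ and disagrees with $M$ at $w$. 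Commutativity forces every orbit of $\langle M,L_M\rangle$ in $[e,w]$ to be dihedral of rank $1$ or $2$, with the top orbit of rank $2$ (since $L_M(w)\neq M(w)$), so after passing to $W^H$ via Lemma~\ref{noorbitaparabolico} one gets strict coherence of $M^H$ and $L_M^H$ immediately; the chain is then simply $(M^H, L_M^H, L_N^H, N^H)$, with the middle pair handled by the left-multiplication case. This is where simple-lacedness enters, and without this result (or a substitute) your proof of requirement (4) does not go through.

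Two smaller corrections. First, Lemma~\ref{noconfigurazione} does not play the role you assign to it: it is used only to establish the classification in Lemma~\ref{noorbitaparabolico} (that a maximal element of $\mathcal O\cap W^H$ other than the top must be a coatom of $\hat{1}_{\mathcal O}$); it says nothing about excluding ``an orbit of rank $2$ inside a top orbit of rank $1$ or $3$,'' and the obstruction in the non-simply-laced case is not removed by it. Second, your plan of showing pairwise strict coherence of the $\lambda_s$ via left $W_{\{s,t\}}$-cosets is fine as far as it goes (and matches the first case of the paper's proof), but note that the paper never needs to compare two arbitrary $H$-special matchings directly: the whole point of the commuting $L_M$ is to avoid having to control the orbit ranks of $\langle M_1,N_1\rangle$ for a general pair.
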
  
\begin{proof}
Let us prove that $(W^H, SPM^H)$ is a pircon system. The only nontrivial property in Definition~\ref{sistema} is the last one. Fix $w\in W^H\setminus \{e\}$. Let $M^H$ and $N^H$ be two special partial matchings (obtained from the two special matchings $M$ and $N$)  such that $M^H(w)$ and $N^H(w)$ are defined and satisfy $M^H(w)\lhd w$ and $N^H(w)\lhd w$. For short, we denote  the restrictions of the matchings $M^H$ and $N^H$ with the same letters $M^H$ and $N^H$. (Actually, the restriction of a special partial matching obtained from an $H$-special matching $M$ is the special partial matching obtained from the $H$-special matching given by the restriction of $M$). 

If both $M$ and $N$ are left multiplication matchings, then all orbits of $\langle M,N \rangle $ in $[e,w]$ are dihedral intervals of the same rank $d$; furthermore, $\langle M^H, N^H \rangle (w)$ is a  dihedral interval of  rank $d$ and, by Lemma~\ref{noorbitaparabolico}, any other orbit  of $\langle M^H,N^H \rangle $ is either
\begin{itemize}
\item a dihedral orbit  of rank d, or
\item a chain-like orbit of rank 0, or
\item a chain-like orbit of rank $d-1$,
\end{itemize}
and $M^H$ and $N^H$ are strictly coherent.

If $M$ is not a left multiplication matching, then by \cite[Corollary~3.8]{Mtrans} there exists a  left multiplication
matching $L_M$ (which, being a left multiplication matching, is $H$-special) that commutes with $M$ and such that does not agree with $M$ on $w$. (We stress the fact that \cite[Corollary~3.8]{Mtrans} requires the hypothesis that $W$ is simply laced). Consider   $L_M^H$, the special partial matching of $[e,w]^H$ obtained from $L_M$. Notice that $L_M^H\in SPM^H$. All orbits of $\langle M,L_M \rangle $ in $[e,w]$ are dihedral intervals of rank 2 or 1; by Lemma~\ref{noorbitaparabolico}, an orbit which is a dihedral interval of rank 2 or 1 gives rise to an orbit of $\langle M^H,L_M^H \rangle $ which is either
\begin{itemize}
\item the empty set, or
\item a dihedral interval of rank 2 or 1, or
\item a chain of rank $1$ or $0$.
\end{itemize}
Since $M$ and $L_M$ do not agree  on $w$, the orbit $\langle M^H,L_M^H \rangle (w)$ is a dihedral interval of rank 2  and so $M^H$ and $L_M^H$ are strictly coherent.

If $N$ is a left multiplication matching, then the sequence $(M^H,L_M^H,N^H)$ is a sequence of strictly coherent special partial matchings. If $N$ is not a left multiplication matching, we apply the same argument and find a left multiplication matching $L_N$ whose associated  special partial matching  $L_N^H$  belongs to $SPM^H$ and is strictly coherent with $N^H$. If $L_M^H\neq L^H_N$, then the sequence $(M^H,L_M^H,L^H_N,N^H)$ is a sequence of strictly coherent special partial matchings. If $L_M^H = L^H_N$, then the sequence $(M^H,L_M^H=L^H_N,N^H)$ is a sequence of strictly coherent special partial matchings.

The last assertion now follows by Theorem~\ref{computa}.
\end{proof}

\begin{rem}
Theorem~\ref{quoziente sistema pircone} does not hold without the assumption that $W$ is simply laced. For example, if
\begin{itemize}
\item $W$ is a Coxeter group with generator set  $\{s, t,p\}$ and relations $m(s,t)\geq 5$, $m(s,p)=3$,  $m(t,p)\geq 3$,
\item $H=\{p\}$,
\item $w=ststps$,
\end{itemize}
then $w\in W^H$, and $[e,w]$ has exactly three $H$-special matchings. Since all of them send $w$ to $tstps$,  no two of the associated special partial matchings of $[e,w]^H$ are coherent.
Moreover, one can easily check that there are no other special partial matchings of $[e,w]^H$, and therefore  $[e,w]^H$ is not a dircon.
\begin{probl}
Let $(W,S)$ be a Coxeter system, $H\subseteq S$, and $w\in W^H$. When is  $[e,w]^H$ a dircon?
\end{probl}
Notice that, in general, not all special partial matchings of  $[e,w]^H$ come from $H$-special matchings of $[e,w]$ (even under the hypothesis $W$ is simply laced). 
\end{rem}

\bigskip

From a combinatorial point of view, the most intriguing conjecture about (classical) Kazhdan--Lusztig polynomials of Coxeter groups is the Combinatorial Invariance Conjecture of Kazhdan--Lusztig polynomials, which was independently formulated by Lusztig in private and by Dyer in \cite{Dyeth}.
\begin
{con}
\label{comb-inv-con}
Let $(W,S)$ be a Coxeter system, and $u,v\in W$. The (classical) Kazhdan--Lusztig polynomial $P_{u,v}(q)$ depends only on the isomorphism class of the interval $[u,v]$ as a poset. 
\end{con}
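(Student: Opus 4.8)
The plan is to reduce the conjecture to the corresponding statement for $R$-polynomials, and then to compute $R_{u,v}(q)$ from the poset $[u,v]$ alone by running a special-matching recursion. For the reduction, recall (cf.\ the parabolic version in Remark~\ref{senso}, taken with $H=\emptyset$) that the families $\{P_{x,y}\}_{x,y\in[u,v]}$ and $\{R_{x,y}\}_{x,y\in[u,v]}$ determine one another: the identity $q^{\ell(v)-\ell(u)}P_{u,v}(q^{-1})=\sum_{z\in[u,v]}R_{u,z}(q)P_{z,v}(q)$, together with $P_{v,v}(q)=1$ and the degree bound $\deg P_{u,v}\le\frac{1}{2}(\ell(v)-\ell(u)-1)$, is a triangular system whose solution uses nothing about $W$ beyond the poset $[u,v]$ --- the exponents $\ell(v)-\ell(u)$ being just ranks, since $W$ is graded. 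Hence it suffices to prove that $R_{u,v}(q)$ depends only on the isomorphism type of $[u,v]$ as a poset.

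For the main step I would invoke the dircon machinery of this paper. It is classical that every lower Bruhat interval $[e,v]$ is a zircon, the right multiplication matchings $\rho_s$ furnishing a special matching of each principal lower ideal; and the paper's recursion \eqref{klperpirconi} with $x=q$ (where $q-1-x=-1$ by \eqref{q-1-}, and where the fixed-point clause is vacuous because a special \emph{matching} has none) reproduces the classical $R$-polynomial recursion via Lemma~\ref{ll}. What one would want is the analogue for an arbitrary interval: that $[u,v]$, regarded as an abstract poset, is a dircon in the sense of Definition~\ref{dircone}. Granting that, Corollary~\ref{unici} shows the family $\{R^{q}_{x,y}\}_{x,y\in[u,v]}$ is intrinsic to the poset $[u,v]$, and a routine comparison of recursions identifies it with the classical $R$-polynomials on $[u,v]$; combinatorial invariance of $R$, and hence of $P$, would follow.

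The one genuinely hard point --- and, at present, the unresolved one --- is exactly this structural input. It is not known that an arbitrary Bruhat interval $[u,v]$ is a zircon (equivalently, here, a dircon), let alone that a special matching of each principal lower ideal can be recovered from the bare poset structure; for general $u$ the interval $[u,v]$ need not be a lower interval, and the existence of multiplication matchings is lost. For lower intervals this is classical, and combinatorial invariance of $R$-polynomials on $[e,v]$ was established along precisely the coherence lines recast in this paper (see \cite{BCM2}); in general it is the crux of Conjecture~\ref{comb-inv-con} and remains open. Thus the pircon/dircon framework reduces the conjecture to the purely order-theoretic assertion that every Bruhat interval is a dircon, but does not by itself prove it; a complete argument would require either a structural theorem producing such matchings from poset data alone, or a genuinely new idea.
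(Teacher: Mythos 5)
There is no proof to compare against: the statement you were given is Conjecture~\ref{comb-inv-con}, the Combinatorial Invariance Conjecture, which the paper explicitly presents as open (citing \cite{BCM1} only for a partial result). Your proposal is honest about this --- you do not claim a proof, and your reduction from $P$-polynomials to $R$-polynomials matches what the paper itself asserts (the two invariance statements are equivalent). So the ``gap'' here is simply the conjecture itself, and you have correctly located where it lives.

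That said, two points in your outline deserve sharpening. First, ``zircon'' and ``dircon'' are not equivalent notions: a dircon is a \emph{pircon} all of whose special partial matchings of each $P_{\leq w}$ are pairwise coherent (Definition~\ref{dircone}), and a zircon can fail to be a dircon; what you actually need is the dircon (or pircon-system) property, not the zircon one. Second, even granting that an arbitrary $[u,v]$ were a dircon, Corollary~\ref{unici} would only give you a well-defined family of $R^q$-polynomials intrinsic to the poset; identifying that family with the classical $R_{x,y}$ restricted to $[u,v]$ is \emph{not} a routine comparison of recursions when $u\neq e$, because the multiplication matchings $\rho_s$, $\lambda_s$ are matchings of $[e,v]$ and in general do not restrict to $[u,v]$, so there is no a priori calculating matching of $[u,v]$ coming from the group structure to anchor the identification. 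This is a second, independent obstruction beyond the existence of abstract special partial matchings, and it is part of why the conjecture resists the coherence strategy that succeeds for lower intervals in \cite{BCM1}.
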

The Combinatorial Invariance Conjecture of Kazhdan--Lusztig polynomials is equivalent to the analogous conjecture on the combinatorial invariance of  Kazhdan--Lusztig $R$-polynomials, and is still very much open (see \cite{BCM1} for a partial result).
The problem of the combinatorial invariance of parabolic Kazhdan--Lusztig polynomials, which is clearly stronger than the combinatorial invariance of the ordinary Kazhdan--Lusztig polynomials, also attracted much attention but has only recently been found to be false (see \cite{Mtrans} for a counterexample by Mongelli and \cite{Mtrans,M} for more details on this subject).
For pircons, counterexamples already exist in small pircons: the two intervals $[d,a]$ and $[e,a]$ in the pircon of Example~\ref{nonvale} are both chains of length 3 and have different Kazhdan--Lusztig $R^x$-polynomials: 
$$R^x_{d,a}(q)=(q-1)(q-1-x)^2\neq  R^x_{e,a}(q)= (q-1)^2(q-1-x).$$

\section{Pircons and Stanley's kernels}
After introducing the Kazhdan--Lusztig $R$-polynomials of a refined pircon $(P,\mathcal M)$, it is natural to look for the analog of the Kazhdan--Lusztig polynomials in this general context. Since, for Coxeter groups, these are the Kazhdan--Lusztig--Stanley polynomials of the Kazhdan--Lusztig $R$-polynomials, one might want to define the Kazhdan--Lusztig polynomials of a refined pircon $(P,\mathcal M)$ as the Kazhdan--Lusztig--Stanley polynomials of the Kazhdan--Lusztig $R$-polynomials of $(P,\mathcal M)$. Unfortunately, in general, the Kazhdan--Lusztig--Stanley polynomials of the Kazhdan--Lusztig $R$-polynomials of a refined pircon do not exist.

Let us briefly recall from \cite{S} the definition of $P$-kernel and Kazhdan--Lusztig--Stanley polynomials.

Let $P$ be a locally finite graded poset, with rank function $\rho$. The \emph{incidence algebra} of $P$ over the polynomial ring  $\mathbb R[q]$, denoted
$I(P)$, is the associative algebra of functions $f$ assigning to each nonempty interval $[u, v]$ an element  $f_{u,v}(q)\in \mathbb R[q]$ (denoted also simply by $f_{u,v}$ when no confusion arises) with usual sum and convolution product: $(f+g)_{u,v}=f_{u,v}+g_{u,v}$ and $(f \cdot g)_{u,v}=\sum_{z:u\leq z\leq v} f_{u,z} \;g_{z,v}$, for all $f,g \in  I(P)$ and all $u,v\in P$ with $u\leq v$. The  identity element of  $I(P)$ is the \emph{delta function} $\delta$, defined by $\delta_{u,v}=\left\{\begin{array}{ll}
1 & \text{if $u=v$,} \\
0 & \text{if $u<v$.}
\end{array} \right.$
An element $f\in I(P)$ is invertible if and only if $f_{u,u} \in \mathbb R \setminus \{0\}$ for all $u\in P$. By convention, for all $f\in I(P)$, we let  $f_{u,v}= 0$ whenever $u\not\leq v$. We say that $f\in I(P)$ is unitary if $f_{u,u}=1$, for all $u\in P$.
Let 
\begin{itemize}
\item $I'(P)=\{f \in I(P): \deg f_{u,v} \leq \rho(v)-\rho(u), \text{ for all $u,v\in P$ with $u\leq v$}\}$, 
\item  $I_{\frac{1}{2}}(P)=\{f\in I'(P) \text{ unitary}: \deg f_{u,v}<\frac{1}{2}(\rho(v)-\rho(u)), \text{ for all  $u,v\in P$ with $u< v$}\}$.
\end{itemize}
Note that $I'(P)$ is a subalgebra of $I(P)$, closed under taking inverse. 
Given $f \in I'(P)$,  we denote by $\overline{f}$ the element of $I'(P)$  such that $\overline{f}_{u,v}(q)=q^{\rho(v)-\rho(u)} f_{u,v}(q^{-1})$, for all  $u,v\in P$ with $u\leq v$. 
Notice that the map $\; \bar{\text{}} \;$ is an involution on $I'(P)$.

A unitary element $K\in I(P)$ is a \emph{$P$-kernel} if there exists
an invertible element $f \in I(P)$ such that $K \cdot f= \overline{f}$.
Such an  element $f\in I(P)$  is called \emph{invertible $K$-totally acceptable} function in \cite{S}.  See \cite[Theorem~6.5, Proposition~6.3, Corollary~6.7]{S} for a proof of the next result.
\begin{thm}
Let $P$ be a locally finite graded poset.
\begin{enumerate}
\item A unitary $K\in I'(P)$  is a $P$-kernel if and only if $K \cdot \overline{K}=\delta$.
\item There is a bijection from the set of $P$-kernels of $I'(P)$ to $ I_{\frac{1}{2}}(P)$ that assigns to $K$ an invertible $K$-totally acceptable function.
\end{enumerate} 
\end{thm}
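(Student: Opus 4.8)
The plan is to prove both parts by the standard Kazhdan--Lusztig--Stanley induction on the rank difference $n:=\rho(v)-\rho(u)$, built on one preliminary observation to be verified by a direct computation with the convolution product: the bar operation $f\mapsto\overline f$ is an $\mathbb R$-algebra automorphism of $I'(P)$, it is involutive, it fixes $\delta$, and it preserves invertibility (since $\overline f_{u,u}=f_{u,u}$). In particular $\overline{fg}=\overline f\cdot\overline g$, so from $K\cdot\overline K=\delta$ one gets $\overline K\cdot K=\delta$, i.e.\ $\overline K=K^{-1}$, and conversely.

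For part (1) the forward implication is purely formal: if $K$ is a $P$-kernel and $f$ is invertible with $K\cdot f=\overline f$, applying the bar automorphism gives $\overline K\cdot\overline f=f$; substituting the first relation yields $K\overline K\,\overline f=\overline f$, and cancelling the invertible $\overline f$ gives $K\overline K=\delta$. For the converse, assume $K\in I'(P)$ is unitary with $K\overline K=\delta$. I would construct $f$ with $f_{u,u}=1$, $\deg f_{u,v}<\tfrac12(\rho(v)-\rho(u))$ for $u<v$, and $(Kf)_{u,v}=\overline f_{u,v}$, by induction on $n$. At a step, set $A_{u,v}:=\sum_{u<z\le v}K_{u,z}f_{z,v}$, which is already determined; the equation $(Kf)_{u,v}=\overline f_{u,v}$ becomes $f_{u,v}-q^{n}f_{u,v}(q^{-1})=-A_{u,v}$. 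An elementary lemma on polynomials (the map $h\mapsto h(q)-q^{n}h(q^{-1})$ is a bijection from polynomials of degree $<n/2$ onto the polynomials $B$ of degree $\le n$ with $q^{n}B(q^{-1})=-B$) shows this has a unique solution $f_{u,v}$ of degree $<n/2$ provided $\deg A_{u,v}\le n$ and $\overline{A_{u,v}}=-A_{u,v}$ (bar at degree $n$). The degree bound is immediate from $\deg K_{u,z}\le\rho(z)-\rho(u)$ together with the inductive degree bounds on $f$. For the antisymmetry -- the substantive point -- I would invoke associativity of $I(P)$: the formal identity $(\overline K(Kf))_{u,v}=((\overline K K)f)_{u,v}=f_{u,v}$ holds whatever the still-undefined $f_{u,v}$ is; expanding the left side, using $(Kf)_{u,v}=f_{u,v}+A_{u,v}$ and the inductive hypothesis $(Kf)_{z,v}=\overline f_{z,v}$ for $z>u$, gives $f_{u,v}+A_{u,v}+A'_{u,v}=f_{u,v}$ with $A'_{u,v}:=\sum_{u<z\le v}\overline K_{u,z}\overline f_{z,v}$, so $A_{u,v}=-A'_{u,v}$; applying bar termwise to $A'_{u,v}$ (each term a product across a subdivided interval, on which bar is multiplicative) yields $\overline{A'_{u,v}}=A_{u,v}$, hence $\overline{A_{u,v}}=-A_{u,v}$. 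This proves the converse in (1) and simultaneously produces a $K$-totally acceptable $f$ in $I_{\frac12}(P)$, unique at each step.

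For part (2), the bijection sends a $P$-kernel $K\in I'(P)$ to the function $f\in I_{\frac12}(P)$ just constructed; this is well defined since by (1) such $K$ satisfy $K\overline K=\delta$. Injectivity: if $Kf=\overline f=K'f$ then $(K-K')f=0$ with $f$ invertible, so $K=K'$. Surjectivity: given $f\in I_{\frac12}(P)$, put $K:=\overline f\,f^{-1}$; then $Kf=\overline f$, so $K$ is a $P$-kernel, $K$ is unitary (bar and inverse preserve unitarity), and $K\in I'(P)$ because $I'(P)$ is a subalgebra closed under inverse (as noted in the text) and contains $\overline f$. By the uniqueness in the construction, the acceptable function attached to this $K$ in $I_{\frac12}(P)$ is $f$ itself.

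The main obstacle is the antisymmetry step $\overline{A_{u,v}}=-A_{u,v}$: one must recognize that $K\overline K=\delta$ is precisely the hypothesis making the recursion self-consistent, and the cleanest route is the associativity identity $(\overline K(Kf))_{u,v}=f_{u,v}$ rather than a direct assault on the defining recursion. The remaining ingredients -- the polynomial lemma, the degree bookkeeping, and the two-line bijection argument -- are routine once the bar map is understood as a multiplicative involution on $I'(P)$.
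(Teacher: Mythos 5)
Your argument is correct and is essentially the standard Kazhdan--Lusztig--Stanley recursion; the paper itself gives no proof, citing Stanley's Theorem~6.5, Proposition~6.3 and Corollary~6.7, and your reconstruction (bar as a multiplicative involution on $I'(P)$, the degree-$<n/2$ polynomial lemma, and the associativity identity $(\overline{K}(Kf))_{u,v}=f_{u,v}$ to get the antisymmetry of $A_{u,v}$) is the same route taken there. No gaps.
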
 

Let $K\in I'(P)$ be a $P$-kernel. Following \cite{BJoA}, we refer to the unique invertible $K$-totally acceptable function of $ I_{\frac{1}{2}}(P)$ as the \emph{Kazhdan--Lusztig--Stanley polynomials} of $K$. 

As shown in \cite[Sections 6 and 7]{S}, Kazhdan--Lusztig--Stanley polynomials specialize to many interesting objects. As an example, the  Kazhdan--Lusztig $R$-polynomials of a Coxeter group $W$ form a $W$-kernel whose Kazhdan--Lusztig--Stanley polynomials are the Kazhdan--Lusztig polynomials of $W$. More generally,  for $H\subseteq S$ and $x\in\{q,-1\}$, the parabolic Kazhdan--Lusztig $R^{H,x}$-polynomials of $W^H$ form a $W^H$-kernel whose Kazhdan--Lusztig--Stanley polynomials are the parabolic Kazhdan--Lusztig $P^{H,x}$-polynomials of $W^H$ (see \cite[Lemma~2.8~(iv) and Proposition~3.1]{Deo87}).

Unfortunately, as shown in Example~\ref{nonvale}, in general the Kazhdan--Lusztig $R^x$-polynomials of a refined pircon $P$ do not form a $P$-kernel. Therefore, there are no Kazhdan--Lusztig--Stanley polynomials associated with them.

\begin{exa}
\label{nonvale}
Let $P$ be the pircon in the first picture of Figure~\ref{nonkernel}, with the structure of a refined pircon given by the special partial matchings depicted in the other pictures. 

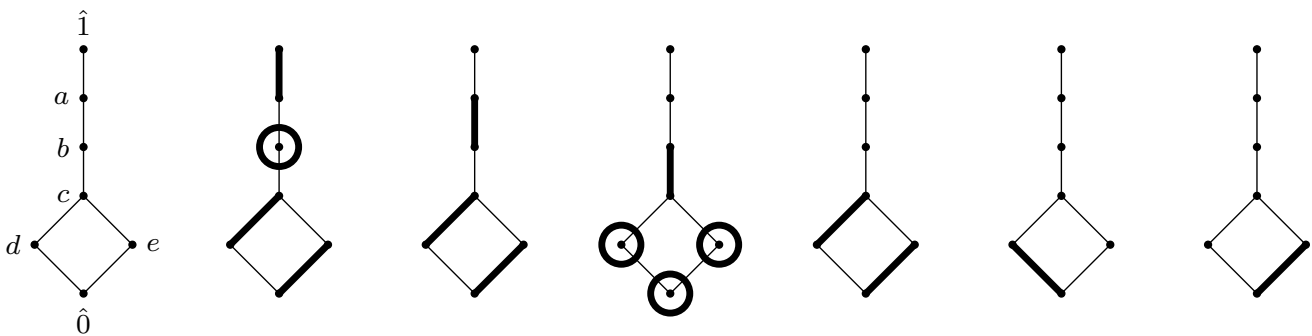
\begin{figure}[h]
\begin{center}
$$
\scalebox{1.3}{
\begin{tikzpicture}
\foreach \x in {0,2,4,6,8,10,-2}
\draw[fill=black]{(0+\x,0) circle(1pt)};
\foreach \x in {0,2,4,6,8,10,-2}
\draw[fill=black]{(-0.5+\x,0.5) circle(1pt)};
\foreach \x in {0,2,4,6,8,10,-2}
\draw[fill=black]{(0+\x,1) circle(1pt)};
\foreach \x in {0,2,4,6,8,10,-2}
\draw[fill=black]{(0+\x,1.5) circle(1pt)};
\foreach \x in {0,2,4,6,8,10,-2}
\draw[fill=black]{(0.5+\x,0.5) circle(1pt)};
\foreach \x in {0,2,4,6,8,10,-2}
\draw[fill=black]{(0+\x,2) circle(1pt)};
\foreach \x in {0,2,4,6,8,10,-2}
\draw[fill=black]{(0+\x,2.5) circle(1pt)};
\foreach \x in {0,2,4,6,8,10,-2}
 \draw{(0+\x,0)--(0.5+\x,0.5)};
 \foreach \x in {0,2,4,6,8,10,-2}
 \draw{(0+\x,0)--(-0.5+\x,0.5)}; 
 \foreach \x in {0,2,4,6,8,10,-2}
 \draw{(-0.5+\x,0.5)--(0+\x,1)}; 
 \foreach \x in {0,2,4,6,8,10,-2}
 \draw{(0.5+\x,0.5)--(0+\x,1)}; 
 \foreach \x in {0,2,4,6,8,10,-2}
 \draw{(0+\x,1)--(0+\x,1.5)}; 
 \foreach \x in {0,2,4,6,8,10,-2}
\draw{(0+\x,2)--(0+\x,1.5)}; 
\foreach \x in {0,2,4,6,8,10,-2}
 \draw{(0+\x,2)--(0+\x,2.5)}; 
 
 \node[above]at(-2,2.5){\tiny{$\hat{1}$}};
  \node[left]at(-2,2){\tiny{$a$}};
  \node[left]at(-2,1.5){\tiny{$b$}};
  \node[left]at(-2,1){\tiny{$c$}};
  \node[left]at(-2.5,0.5){\tiny{$d$}};
  \node[right]at(-1.5,0.5){\tiny{$e$}};
  \node[below]at(-2,0){\tiny{$\hat{0}$}};
 
 \draw[ line width=2pt]{(-0.5,0.5)--(0,1)};
 \draw[ line width=2pt]{(0,0)--(0.5,0.5)}; 
 \draw[ line width=2pt]{(0,2)--(0,2.5)}; 
 \draw[ line width=2pt]{(0,1.5) circle (.2)};
 
 \draw[ line width=2pt]{(1.5,0.5)--(2,1)};
 \draw[ line width=2pt]{(2,0)--(2.5,0.5)}; 
 \draw[ line width=2pt]{(2,2)--(2,1.5)}; 
 
 \draw[ line width=2pt]{(4,1)--(4,1.5)}; 
 \draw[ line width=2pt]{(3.5,0.5) circle (.2)};
 \draw[ line width=2pt]{(4,0) circle (.2)};
  \draw[line width=2pt]{(4.5,0.5) circle (.2)};
  
  \draw[ line width=2pt]{(5.5,0.5)--(6,1)};
 \draw[ line width=2pt]{(6,0)--(6.5,0.5)}; 
 
  \draw[ line width=2pt]{(8,0)--(7.5,0.5)}; 
  
   \draw[ line width=2pt]{(10,0)--(10.5,0.5)}; 
  
\end{tikzpicture}
}$$
\end{center}
\caption{\label{nonkernel} A refined pircon.} 
\end{figure}

Using the recursive formula (\ref{klperpirconi}), we can compute all Kazhdan--Lusztig $R^x$-polynomials of this refined pircon. In particular, 
$$ (R^x\cdot \overline{R^x})_{e,\hat{1}}= \sum_{z: e\leq z}R^x_{e,z}(q) \; q^{5-\rho(z)} \;R^x_{z,\hat{1}}(\frac{1}{q})=-q(q-1)^2\neq 0,$$
 so  the $R^x$-polynomials of this refined pircon do not form a $P$-kernel.
\end{exa}

{\bf Acknowledgements:}  I am grateful  to  the anonymous referee for the valuable suggestions, which provided insights that helped improve the paper.


\begin{thebibliography}{xx}

\bibitem{AHH}
N. Abdallah, M. Hansson, A. Hultman, 
{\em Topology of posets with special partial matchings}, Advances in Math. {\bf 348} (2019), 255-276.

\bibitem{AH}
N. Abdallah, A. Hultman, 
{\em Combinatorial invariance of Kazhdan–Lusztig–Vogan
polynomials for fixed point free involutions}, J. Algebr. Comb. {\bf 47} (2018), 543-560.

\bibitem{BB}
A. Bj\"{o}rner, F. Brenti, 
{\em Combinatorics of Coxeter Groups}, Graduate Texts in Mathematics {\bf 231}, Springer-Verlag, New York, 2005.

\bibitem{BJoA}
F. Brenti, {\em $P$-kernels, IC bases and Kazhdan--Lusztig polynomials}, 
J. Algebra {\bf 259} (2003), 613-627.

\bibitem{BCM1}
F. Brenti, F. Caselli, M. Marietti, {\em Special Matchings and Kazhdan--Lusztig polynomials}, 
Advances in Math. {\bf 202} (2006), 555-601.

\bibitem{BCM2}
F. Brenti, F. Caselli, M. Marietti, {\em Diamonds and Hecke algebra representations}, Int. Math. Res. Not., {\bf Volume 2006}, Article ID 29407, 1-34.

\bibitem{CM1}
F. Caselli, M. Marietti, {\em Special matchings in Coxeter groups}, Europ. J. Combin. {\bf 61} (2017), 151-166.

\bibitem{CM2}
F. Caselli, M. Marietti, {\em A simple characterization of special matchings in lower Bruhat intervals}, Discrete Math. {\bf 341} (2018), 851-862.

\bibitem{Deo77}
V. V. Deodhar, {\em Some characterizations of Bruhat ordering 
on a Coxeter group and determination of the relative 
M\"{o}bius function}, Invent. Math., {\bf 39} (1977), 187-198.


\bibitem{Deo87}
V. Deodhar, {\em On some geometric aspects of Bruhat orderings. II. The parabolic analogue of Kazhdan--Lusztig polynomials}, J. Algebra, {\bf 111} (1987), 483-506.

\bibitem{Dyeth}
M. J. Dyer, {\em Hecke algebras and reflections in Coxeter groups}, 
Ph. D. Thesis, University of Sydney, 1987.

\bibitem{H}
 A. Hultman, 
{\em Criteria for rational smoothness of some symmetric orbit closures}, Adv. in Math. {\bf 229} (2012), 183-200.

\bibitem{H08}
 A. Hultman, 
{\em Fixed Points of Zircon Automorphisms}, Order {\bf 25} (2008), 85-90.

\bibitem{Hum} 
J.E. Humphreys, {\em Reflection Groups and Coxeter Groups}, 
Cambridge Studies in Advanced Mathematics, no.29,
Cambridge Univ. Press, Cambridge,  1990.

\bibitem{K-L}
D. Kazhdan, G. Lusztig, {\em Representations of Coxeter groups
and Hecke algebras}, Invent. Math. {\bf 53} (1979), 165-184.

\bibitem{MJaco} M. Marietti, {\em Algebraic and combinatorial properties of zircons}, J. Algebraic Combin., {\bf 26} (2007), 363-382.

\bibitem{Mtrans} M. Marietti, {\em Special matchings and parabolic Kazhdan--Lusztig polynomials},  Trans. Amer. Math. Soc. {\bf 368} (2016), no. 7, 5247-5269.

\bibitem{M} M. Marietti, {\em The combinatorial invariance conjecture for parabolic Kazhdan--Lusztig polynomials of lower intervals},  Advances in Math. {\bf 335} (2018), 180-210.

\bibitem{StaEC1}
 R.P. Stanley,
{\em Enumerative Combinatorics }, vol.1, Wadsworth and
Brooks/Cole, Monterey, CA, 1986. 

\bibitem{S} 
R.P. Stanley, {\em Subdivisions and local $h$-vectors}, J. Amer. Math. Soc. {\bf 5} (1992), 805–851.


\end{thebibliography}
\end{document}